\numberwithin{equation}{subsection}
\newtheorem{thm}[equation]{Theorem}
\newtheorem{prop}[equation]{Proposition}
\newtheorem{lem}[equation]{Lemma}
\newtheorem{cor}[equation]{Corollary}
\theoremstyle{definition}
\newtheorem{definition}[equation]{Definition}
\theoremstyle{remark}
\newtheorem{remark}[equation]{Remark}
\newtheorem{exm}[equation]{Example}
\newtheorem{example}[equation]{Example}
\newcommand{\Z}{\mathbb{Z}}  
\newcommand{\Q}{\mathbb{Q}}
\newcommand{\G}{\mathbb{G}}
\DeclareMathOperator{\GL}{GL}
\DeclareMathOperator{\disc}{disc}
\DeclareMathOperator{\Gen}{Gen}
\DeclareMathOperator{\Cls}{Cls}
\DeclareMathOperator{\opchar}{char} 
\DeclareMathOperator{\Hom}{Hom}
\DeclareMathOperator{\rk}{rank}
\DeclareMathOperator{\Cl}{Cl}
\DeclareMathOperator{\SL}{SL}
\DeclareMathOperator{\Nm}{Nm}
\DeclareMathOperator{\Tr}{Tr}
\DeclareMathOperator{\Clf}{Clf}
\DeclareMathOperator{\End}{End}
\DeclareMathOperator{\M}{M}
\DeclareMathOperator{\Sym}{Sym}
\DeclareMathOperator{\id}{id}
\DeclareMathOperator{\Pic}{Pic}
\DeclareMathOperator{\Gal}{Gal}
\DeclareMathOperator{\Frac}{Frac}
\DeclareMathOperator{\nrd}{nrd}
\DeclareMathOperator{\Ann}{Ann}
\DeclareMathOperator{\GO}{GO}
\DeclareMathOperator{\Orthogonal}{O}
\DeclareMathOperator{\GSO}{GSO}
\DeclareMathOperator{\SO}{SO}
\DeclareMathOperator{\Aut}{Aut}
\DeclareMathOperator{\RigPic}{RigPic}
\DeclareMathOperator{\Spec}{Spec}
\DeclareMathOperator{\ev}{ev}
\DeclareMathOperator{\univ}{univ}
\newcommand{\Gm}{\mathbb{G}_m}
\newcommand{\tbigwedge}{\smash{\raisebox{0.2ex}{\ensuremath{\textstyle{\bigwedge}}}}}
\newcommand{\scrA}{\mathscr{A}}
\newcommand{\scrC}{\mathscr{C}}
\newcommand{\scrE}{\mathscr{E}}
\newcommand{\scrF}{\mathscr{F}}
\newcommand{\scrO}{\mathscr{O}}
\newcommand{\scrM}{\mathscr{M}}
\newcommand{\scrL}{\mathscr{L}}
\newcommand{\calO}{\scrO}
\newcommand{\scrI}{\mathscr{I}}
\newcommand{\scrJ}{\mathscr{J}}
\newcommand{\scrN}{\mathscr{N}}
\newcommand{\scrP}{\mathscr{P}}
\newcommand{\scrZ}{\mathscr{Z}}
\newcommand{\frakp}{\mathfrak{p}}
\DeclareMathOperator{\HHom}{\mathscr{H}\!\mathit{om}}
\DeclareMathOperator{\TTen}{\mathscr{T}\!\mathit{en}}
\newenvironment{enumalph}
{\begin{enumerate}}
{\end{enumerate}}
\newenvironment{enumroman}
{\begin{enumerate}}
{\end{enumerate}}
\newcommand{\olsi}[1]{\,\overline{\!{#1}}}
\newcommand{\defi}[1]{\textit{\textsf{#1}}}
\newcommand{\catPic}{\textup{\textbf{\textsf{Pic}}}}
\newcommand{\catSch}{\textup{\textbf{\textsf{Sch}}}}
\newcommand{\catAlg}{\textup{\textbf{\textsf{Alg}}}}
\newcommand{\catQuad}{\textup{\textbf{\textsf{Alg}}}_2}
\newcommand{\catQMod}{\textup{\textbf{\textsf{QMod}}}}
\newcommand{\catBQMod}{\textup{\textbf{\textsf{QMod}}}_2}
\newcommand{\catPSReg}{\textup{\textbf{\textsf{PsReg}}}}
\newcommand{\catPsReg}{\catPSReg}
\newcommand{\catBPSReg}{\textup{\textbf{\textsf{PsReg}}}_2}
\newcommand{\catGoodBPSReg}{\textup{\textbf{\textsf{PsReg}}}_2^{\textup{gfr}}}
\newcommand{\Qfr}{\catQMod^{\textup{fr}}}
\author{John Voight}
\address{Department of Mathematics, Dartmouth College, Kemeny Hall, Hanover, NH 03755, USA; Carslaw Building (F07), Department of Mathematics and Statistics, University of Sydney, NSW 2006, Australia}
\email{jvoight@gmail.com}
\author{Haochen Wu}
\address{Department of Mathematics, Dartmouth College, Kemeny Hall, Hanover, NH 03755, USA}
\email{haochen.wu.gr@dartmouth.edu}
\title{An orthogonal perspective on Gauss composition}
\begin{document}

\begin{abstract}
We revisit Gauss composition over a general base scheme, with a focus on orthogonal groups.  We show that the Clifford and norm functors provide an equivalence of stacks between binary quadratic modules and pseudoregular modules over quadratic algebras.  As a consequence, we exhibit a composition law for coprimitive forms over a general base, including a universal version of Dirichlet composition.  This perspective synthesizes the constructions of Kneser and Wood, reconciles algebraic and geometric approaches, and clarifies the role of orientations and the natural emergence of narrow class groups.  
\end{abstract}

\setcounter{tocdepth}{1}

\subjclass[2020]{11E16, 11E12, 11E41, 11R65, 14L35, 14D20}
\keywords{binary quadratic forms, orthogonal groups, Gauss composition, Clifford algebras, norms}

\maketitle
\tableofcontents

\section{Introduction}

\subsection{Motivation}

Classical Gauss composition relates primitive binary quadratic forms of fixed discriminant to ideal or narrow ideal classes in the corresponding quadratic order.  Many papers have been written providing generalizations of this important result (see \cref{recentwork} for recent work).  The purpose of this paper is to explain these variants from a single functorial Clifford--norm equivalence over the stack of quadratic algebras: orientations, rigidifications, similarities, and isometries are obtained by taking appropriate $2$-fibers and changing the allowed morphisms.  As an application, we exhibit a general composition law including a universal form of Dirichlet composition.


\[
\begin{array}{c|c|c}
\text{binary quadratic side} & \text{algebra side} & \text{primitive classes} \\ \hline\hline
\GO, \text{similarities} & \text{semilinear isomorphisms} & \Pic Y/\Aut_X(Y) \\
\GSO, \text{oriented similarities} & \text{isomorphisms} & \Pic Y \\ \hline
\Orthogonal, \text{isometries} & \text{rigidified semilinear isomorphisms} & \RigPic Y/\Aut_X(Y) \\
\SO, \text{oriented isometries} & \text{rigidified isomorphisms} & \RigPic Y 
\end{array}
\]

\subsection{Setup}

To state our results, we will first need some definitions and notation; for further details in the setup, see \cref{sec:setup}.  

Let $X$ be a scheme.  A \defi{quadratic $\scrO_X$-module} is a locally free $\scrO_X$-module $\scrM$ of finite rank equipped with a quadratic map $Q \colon \scrM \to \scrL$ taking values in an invertible $\scrO_X$-module $\scrL$, called the \defi{value module}.  A quadratic map is equivalently specified by a global section of $(\Sym_2 \scrM)^\vee \otimes \scrL$. If we further take $\scrL=\scrO_X$, then we call $Q$ a \defi{quadratic form}. A \defi{similarity} between two quadratic $\scrO_X$-modules is the natural commutative square \eqref{eqn:commdiag}.
The fibered category of quadratic modules of rank $m$ under similarities is naturally a quotient stack $\catQMod_m \simeq [\mathbb{A}^{m(m+1)/2}/(\GL_m \times \Gm)]$.  

The Clifford functor associates to a quadratic $\scrO_X$-module $Q \colon \scrM \to \scrL$ the odd Clifford bimodule $\Clf^1(Q)$ over the even Clifford $\scrO_X$-algebra $\Clf^0(Q)$.  The Clifford bimodule inherits the following key structural property.  Let $\scrA$ be a locally free $\scrO_X$-algebra of rank $n$, and let $\scrI$ be a left $\scrA$-module which is locally free of rank $n$ as an $\scrO_X$-module.  Recall that the \emph{regular} $\scrA$-module is $\scrA$ itself (considered as an $\scrA$-module).  Accordingly, we say that $\scrI$ is \defi{pseudoregular} if every algebra element acts with the same characteristic polynomial as in the regular representation: for all open $U \subseteq X$ and all $\alpha \in \scrA(U)$, the characteristic polynomials of $\alpha$ acting by left multiplication on $\scrA(U)$ and $\scrI(U)$ are equal.  We prove (\ref{prop:cliffpseudo}) that $\Clf^1(Q)$ is pseudoregular as a left (and right) $\Clf^0(Q)$-module.  When $n=2$, it is enough to check traces (\Cref{prop:traceable}), so in this case $\scrI$ is also called \defi{traceable} by Wood \cite{Wood}*{p.~1758}.  

We now restrict to the binary case $m=n=2$.  Then $\Clf^0(Q)$ is a \defi{quadratic} $\scrO_X$-algebra, i.e., one that is locally free of rank $2$ as an $\scrO_X$-module.  Quadratic algebras form a quotient stack $\catQuad \simeq [\mathbb{A}^2/(\G_a \rtimes \Gm)]$, and the formation of the even Clifford algebra defines a morphism of stacks 
\begin{equation}
\Clf^0 \colon \catBQMod \to \catQuad.
\end{equation}

Norms furnish an inverse to the Clifford functor, as follows.  Let $\scrO_Y$ be a quadratic $\scrO_X$-algebra and let $\scrI$ be a pseudoregular $\calO_Y$-module.  We define a canonical \defi{norm map} 
\begin{equation} 
N_\scrI \colon \scrI \to \scrN(\scrI)
\end{equation}
where the codomain is
\begin{equation} \label{eqn:normmodule}
\scrN(\scrI) \colonequals \tbigwedge^2 \scrI \otimes (\scrO_Y/\scrO_X)^\vee;
\end{equation}
it earns its name from the property that 
\begin{equation}
N_\scrI(\gamma x)=\Nm(\gamma)N_\scrI(x)
\end{equation}
for all $x \in \scrI(U)$ and $\gamma \in \scrO_Y(U)$ on open $U \subseteq X$, matching Kneser \cite{Kneser}*{section 4}. The norm map arises from a twist of the \defi{canonical exterior form}, the unique quadratic map
\begin{equation} 
E_\scrI \colon \scrI \otimes \scrO_Y/\scrO_X \to \tbigwedge^2 \scrI \otimes \scrO_Y/\scrO_X
\end{equation}
such that $E_\scrI(x \otimes \gamma) = (\gamma x \wedge x) \otimes \gamma$ for all $x \in \scrI(U)$ and $\gamma \in (\scrO_Y/\scrO_X)(U)$ (independent of lift).  After appropriate identifications and (perhaps a bit surprisingly) a duality, we show this agrees with the construction of Wood \cite{Wood}*{Theorem 2.1} in \cref{sec:wood}; see also Mondal--Venkata Balaji \cite{Mondal}*{Theorem 5.4}.

\subsection{Similarity classes and orientations}

Our first theorem corresponds to the reductive group $\GO$.  Let $\catBPSReg$ be the category of pseudoregular modules over quadratic algebras under semilinear isomorphisms, with the natural forgetful map $\catBPSReg \to \catQuad$ to quadratic algebras: for more, see \Cref{def:pseudoregcat}.

\begin{thm} \label{Main Thm GO}
The Clifford and norm functors define an equivalence of stacks  
\begin{equation}  \label{catbqmodbpsreg}
\catBQMod \xrightarrow{\sim} \catBPSReg 
\end{equation}
over $\catQuad$, namely between
\begin{center}
binary quadratic modules under similarities
\end{center}
and
\begin{center}
pseudoregular modules over quadratic algebras under semilinear isomorphisms.
\end{center}
This equivalence preserves discriminants of objects.   
\end{thm}

In particular, the Clifford and norm functors are quasi-inverse.  Wood \cite{Wood}*{Theorem 2.1} defines a functor from pseudoregular (traceable) modules to binary quadratic modules and shows it induces an equivalence without Clifford or norm language.  
Mondal--Venkata Balaji \cite{Mondal}*{Theorem 3.1} also proved \Cref{Main Thm GO} as an equivalence of categories.

We now examine the fibers in \Cref{Main Thm GO} over $\catQuad$ (for the stacky setup, see \cref{sec:fibers}).  For $Q \colon \scrM~\to~\scrL$ a binary quadratic $\scrO_X$-module, an \defi{$\scrO_Y$-orientation} of $Q$ is an $\scrO_X$-algebra isomorphism $\Clf^0(Q) \xrightarrow{\sim} \scrO_Y$.  A similarity-preserving orientation is called an \defi{oriented similarity}.  
The $2$-fiber of $\catBQMod \to \catQuad$ over $\scrO_Y$ is canonically the category of $\scrO_Y$-oriented binary quadratic $\scrO_X$-modules under oriented similarities.  The following result is then immediate from \Cref{Main Thm GO}, corresponding to the group $\GSO$ (and proven in \cref{sec:orient}).  

\begin{cor} \label{Main Thm GSO}
Let $\scrO_Y$ be a quadratic $\scrO_X$-algebra.  Then the Clifford and norm functors define an equivalence of categories between 
\begin{center}
$\scrO_Y$-oriented binary quadratic $\scrO_X$-modules under oriented similarities
\end{center}
and
\begin{center}
pseudoregular $\scrO_Y$-modules under $\scrO_Y$-module isomorphisms.
\end{center}
\end{cor}

In particular, if $\scrM \leftrightarrow \scrI$ in the equivalence, then we have an isomorphism of group schemes $\GSO(Q) \simeq \Aut_{\scrO_Y}(\scrI)$ over $X$.  
Moreover, the equivalence in \Cref{Main Thm GSO} commutes with a Cartesian base change $Y' \to Y$ over $X' \to X$ under pullback.  

We say a quadratic $\scrO_X$-module $Q$ is \defi{primitive} if values of $Q$ generate $\scrL$ on every open set.  Primitive $\scrO_Y$-oriented binary quadratic $\scrO_X$-modules correspond to invertible (i.e., locally principal) $\scrO_Y$-modules under $\scrO_Y$-module isomorphisms (\Cref{primitive = invertible}).  
We obtain the following corollary.  

\begin{cor} \label{GSO class =Pic Y}
Let $\calO_Y$ be a quadratic $\calO_X$-algebra. Then the Clifford and norm maps define mutually inverse bijections between the set of oriented similarity classes of \emph{primitive} $\calO_Y$-oriented binary quadratic $\calO_X$-modules and the group $\Pic Y$.  
\end{cor}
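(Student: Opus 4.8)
The plan is to obtain the corollary as the shadow on isomorphism classes of the equivalence of categories in \Cref{primitive = invertible}. Fix the quadratic $\calO_X$-algebra $\calO_Y$ and work in the corresponding fiber over $\catQuad$. By \Cref{primitive = invertible}, the Clifford functor and the norm functor restrict to an equivalence between the groupoid $\mathcal{C}$ of primitive $\calO_Y$-oriented binary quadratic $\calO_X$-modules (with morphisms the oriented similarities) and the groupoid $\mathcal{D}$ of invertible $\calO_Y$-modules (with morphisms the $\calO_Y$-module isomorphisms); both are indeed groupoids, since every oriented similarity and every $\calO_Y$-module isomorphism is invertible.

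Next I would pass to isomorphism classes. Any equivalence of categories equipped with a chosen quasi-inverse induces mutually inverse bijections between the sets $\pi_0$ of isomorphism classes of objects on the two sides; since the Clifford and norm functors are mutually quasi-inverse by \Cref{Main Thm GSO} (a property inherited by the restricted functors), they induce mutually inverse bijections $\pi_0(\mathcal{C}) \leftrightarrow \pi_0(\mathcal{D})$. It then remains to identify the two sides concretely. On the source side, two objects of $\mathcal{C}$ are isomorphic precisely when there is an oriented similarity between them, so $\pi_0(\mathcal{C})$ is exactly the set of oriented similarity classes of primitive $\calO_Y$-oriented binary quadratic $\calO_X$-modules. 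On the target side, invertible $\calO_Y$-modules are the line bundles on $Y$, and $\pi_0(\mathcal{D})$ is by definition the underlying set of $\Pic Y$, with group law induced by $\otimes_{\calO_Y}$ (which we do not otherwise need here). This produces the asserted mutually inverse bijections.

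I do not anticipate a genuine obstacle: the mathematical content has already been spent in \Cref{Main Thm GSO} and \Cref{primitive = invertible}, and what is left is bookkeeping — checking that the restricted Clifford and norm functors really land in the stated subgroupoids (carried by \Cref{primitive = invertible}, together with the identification of $\Clf^1(Q)$ with an invertible $\calO_Y$-module for primitive $Q$ noted just before it), verifying that ``isomorphic in $\mathcal{C}$'' unwinds to ``oriented similar'', and observing that the collection of oriented similarity classes forms a set because it is in bijection with $\Pic Y$. Finally, one could transport $\otimes_{\calO_Y}$ across this bijection to equip the set of oriented similarity classes with a group structure — the general Gauss composition law — but this goes beyond what the corollary asserts, so I would leave it as a remark.
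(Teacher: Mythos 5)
Your proposal is correct and follows essentially the same route as the paper: the paper's proof is precisely to take isomorphism classes on both sides of \Cref{Main Thm GSO}, using \Cref{primitive = invertible} to restrict primitive modules to invertible $\calO_Y$-modules, whose classes form $\Pic Y$. The only slight misattribution is that the equivalence itself comes from \Cref{Main Thm GSO} (with \Cref{primitive = invertible} supplying the restriction to the primitive/invertible subcategories), which you in any case also cite.
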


\Cref{GSO class =Pic Y} provides a composition law on the set of oriented similarity classes of primitive binary quadratic modules, spelled out explicitly in the next section.  This law is functorial, i.e., for $Y' \to Y$ it commutes with pullback $\Pic Y \to \Pic Y'$.  Although Wood \cite{Wood} did not make it explicit, other authors \cites{O'Dorney,Dallaporta,Mondal} have adapted her equivalence of categories to provide a composition law as in \Cref{GSO class =Pic Y} using a different notion of orientation that we call a \emph{rigidification} and pursue in \cref{sec:simlattices}: see also \cref{sec:contribution} for comparison.

Forgetting orientations, we also have bijections between the set of similarity classes of primitive binary quadratic $\calO_X$-modules $Q$ such that $\Clf^0(Q) \simeq \scrO_Y$ and the set $\Pic Y/\Aut_X(Y)$, where $\Aut_X(Y)$ acts on $\Pic Y$ by (semilinear) pullback; this can also be recovered directly from \Cref{Main Thm GO}.


\subsection{Universal composition law}

Next, we exhibit a partially defined universal composition law on binary quadratic modules.

Working locally, let $S$ be a free quadratic $R$-algebra, and let $Q,Q' \colon R^2 \to R$ be free, $S$-oriented, binary quadratic $R$-modules.  Then $Q(x,y)=ax^2+bxy+cy^2=[a,b,c]$ with $a,b,c \in R$ and similarly $Q'=[a',b',c']$.  Comparing $S$-orientations gives unique elements $r\in R$ and $u \in R^\times$ such that $b'=u(b+2r)$ and $a'c'=u^2(ac+br+r^2)$.  We say $Q,Q'$ are \defi{coprimitive} if $\langle a,b,c,a',b',c',r \rangle = R$; the notion is independent of the choice of basis, and thus it extends globally.  Of course, if either $Q$ or $Q'$ is primitive, then $Q,Q'$ are coprimitive.  

\begin{thm}\label{thm: composition law}
Let $\scrO_Y$ be a quadratic $\scrO_X$-algebra.  Then the set of oriented similarity classes of $\scrO_Y$-oriented binary quadratic $\scrO_X$-modules carries a functorial, commutative partial monoid structure $[Q''] = [Q] \ast [Q']$ defined whenever $Q,Q'$ are coprimitive.  
\end{thm}

The key observation in the proof of \Cref{thm: composition law} (see \cref{sec:tensorcomp}) is that the tensor product of pseudoregular modules over a quadratic algebra furnishes a composition law when the result is pseudoregular, and this can be precisely understood through coprimitivity (\Cref{tensor product is pseudoregular}).  

We recover Dirichlet composition of united forms in a \emph{universal} way as follows (\Cref{cor:dirichletcomp}).  Again in the local case, suppose we have $\langle a,a',b+r \rangle = R$ (so in particular, $Q,Q'$ are coprimitive).  Let $s,s',t \in R$ be such that 
\begin{equation}
s a+s' a'+t (b+r)=1.
\end{equation}
Then $[Q] \ast  [Q'] = [Q'']$ where 
\begin{equation}
Q''=[ aa', b-2a(ct-rs), cs'+(s b+ct)(c't+rs')+as(cs'+c's)].
\end{equation}
Over $R=\Z$, these coefficients satisfy the congruence conditions giving rise to classical Dirichlet composition!

\subsection{Rigidifications and isometry classes} \label{sec:simlattices}

In the study of quadratic modules, often the value module is fixed.  For example, the more basic notion of a \defi{quadratic form} corresponds to the case where the value module is trivial, $\scrL=\scrO_X$.  

Accordingly, we define a \defi{similitude} between $Q \colon \scrM \to \scrL$ and $Q' \colon \scrM' \to \scrL'$ to be a similarity with $\scrL=\scrL'$, and an \defi{isometry} is a similitude with similitude factor $\lambda=1$.  To make this categorical, the value bundle map $Q \mapsto \scrL$ defines a morphism of stacks
\[ \catBQMod \to \catPic \]
where $\catPic=B\Gm$ is the stack of invertible modules under isomorphisms.  Together with even Clifford, we obtain a stack morphism
\begin{equation} 
\catBQMod \to \catQuad \times \catPic.
\end{equation}

We similarly define such a structure morphism for the stack of pseudoregular modules over quadratic algebras by taking $\scrI \mapsto \scrN(\scrI)$.  We then upgrade \Cref{Main Thm GO} to the following statement.

\begin{prop}{\label{Main Thm sim}}
The equivalence $\catBQMod \xrightarrow{\sim} \catBPSReg$ in \eqref{catbqmodbpsreg}
factors over the stack $\catQuad \times \catPic$.  
\end{prop}

We again look at $2$-fibers.  Let $\scrL$ be an invertible $\scrO_X$-module.  An $\scrL$-\defi{rigidification} of a pseudoregular $\scrO_Y$-module $\scrI$ is an isomorphism $r \colon \scrN(\scrI) \xrightarrow{\sim} \scrL$.  Wood \cite{Wood}*{section 5} calls this an \emph{$\scrL$-type orientation}, but we avoid that name to distinguish it from the orientation defined above.  If $\scrI, \scrI'$ are rigidified $\scrO_Y$-modules, we say that an $\scrO_Y$-module isomorphism $\varphi \colon \scrI \to \scrI'$ is \defi{rigidified} if the induced isomorphism $\scrN(\scrI)\to \scrN(\scrI')$ commutes with the rigidifications.  From a rigidification $r$ of $\scrI$, we define the \defi{modified norm map} by $r \circ N_{\scrI}$.  

\begin{cor} \label{Main Thm SO}
Let $\scrO_Y$ be a quadratic $\scrO_X$-algebra, and let $\scrL$ be an invertible $\scrO_X$-module.  Then the Clifford and modified norm functors define an equivalence of categories between 
\begin{center}
$\scrO_Y$-oriented binary quadratic $\scrO_X$-modules with value module $\scrL$ under oriented \emph{isometries}
\end{center}
and
\begin{center}
$\scrL$-rigidified pseudoregular $\scrO_Y$-modules under \emph{rigidified} $\scrO_Y$-module isomorphisms.
\end{center}
\end{cor}

Let $\RigPic_{\scrL} Y$ be the set of invertible $\scrL$-rigidified $\scrO_Y$-modules under rigidified $\scrO_Y$-module isomorphisms, when nonempty, a torsor under the group $\RigPic_{\scrO_X} Y$.  Then for $\scrL=\scrO_X$, \Cref{Main Thm SO} yields a bijection between $\scrO_Y$-oriented isometry classes of primitive binary quadratic \emph{forms} and the group $\RigPic_{\scrO_X} Y$.  We then have the following result (\Cref{prop: fiber of Rigpic}).

\begin{prop} \label{prop:rig}
Forgetting the rigidification fits into a short exact sequence 
\begin{equation} \label{eqn:narrowclass}
1 \to \scrO_X(X)^\times/\Nm \scrO_Y(Y)^\times
\to \RigPic_{\scrO_X} Y \to \Pic_{\scrO_X} Y \to 1,
\end{equation}
where $\Pic_{\scrO_X} Y \colonequals \ker \scrN$.
\end{prop}

More generally, the \defi{rigidified Picard set}
\begin{equation}
\RigPic Y \colonequals \bigsqcup_{[\scrL] \in \Pic X} \RigPic_{\scrL} Y
\end{equation}
is well-defined up to bijection over $\Pic Y$.  The forgetful map $\RigPic Y \to \Pic Y$ similarly has fibers which are torsors under the group $\scrO_X(X)^\times/\Nm \scrO_Y(Y)^\times$ (rescaling the rigidification).  Thus we get an exact sequence of pointed sets
\begin{equation} \label{eqn:exsequpotgrp}
1 \to \scrO_X(X)^\times/\Nm \scrO_Y(Y)^\times \to \RigPic Y \to \Pic Y \to 1.
\end{equation}

\begin{remark}
Choosing a monoidal skeleton for $\Pic X$, we may give $\RigPic Y$ the structure of a \emph{group} such that \eqref{eqn:exsequpotgrp} defines an exact sequence of groups; however, this group structure and the corresponding extension may depend on the choice of skeleton.
\end{remark}

There are analogous notions obtained by forgetting $\scrO_Y$-orientations, corresponding to the usual orthogonal group $\Orthogonal$. More generally (see \cref{sec: rigidified Picard group}), we may also choose a subgroup $H \leq \scrO_X(X)^\times$ and work with the subgroup of similitudes with similarity factor in $H$, interpolating between the similitude group and the isometry group. 


\subsection{Applications to lattices, genera, and class sets}
Specializing the preceding results to Dedekind domains recovers the classical ideal-theoretic classification of binary quadratic lattices. Let $R$ be a Dedekind domain with field of fractions $F$. Let $Q\colon V\to F$ be a nondegenerate binary quadratic form, and suppose $K\colonequals \Clf^0(Q)$ is a quadratic field. Fix an $R$-order $S\subset K$. We obtain the following result. 

\begin{cor}
The Clifford and norm functors define mutually inverse discriminant-preserving bijections as follows:
\begin{enumalph}
\item between the set of similarity classes of $R$-lattices $M \subset V$ with multiplicator ring $S$ and the group $\Pic S$; and
\item between the set of isometry classes of integral $R$-lattices $M \subset V$ with multiplicator ring $S$ and $Q(M)R=R$ and the group $\RigPic_R S$. 
\end{enumalph}
\end{cor}

The distinction between similarities and isometries is reflected by the passage from $\Pic S$ to $\RigPic_R S$ via the exact sequence \eqref{eq: Rigpic H exact seq}.
For $S=\Z_K$, this recovers the narrow class group in the real quadratic case and the corresponding signed variant in the imaginary quadratic case. 




\subsection{Contribution} \label{sec:contribution}

The first contribution of this paper is to synthesize the work of Kneser \cite{Kneser} and Wood \cite{Wood}, also independently pursued by Mondal--Venkata Balaji \cite{Mondal}.  Kneser approaches composition laws via the even Clifford algebra and the norm without a categorical equivalence, whereas Wood provides a categorical equivalence and geometric approach without a composition law and without Clifford and the norm.  Our theorems combine and recover their results.  In particular, we show that the functor defined by Wood is naturally isomorphic to the norm functor composed with a duality.  For further context on other related work, see \cref{recentwork}.  As a consequence, by viewing Gauss composition over an arbitrary base through the Clifford functor, we can see it as just one instance of the exceptional isomorphisms of Lie groups in low rank $n$ \cite{bookofinv}*{\S 15}: indeed, this paper fills in a gap in the cases of rank $n=1$ (monoid of quadratic rings, \cite{Voight:quadmonoid}) and $n=3$ (quaternion rings, \cite{Voight:quatring}).  

Second, this paper reconciles notions of orientation through a simple category-theoretic lens of taking the $2$-fiber.  Our approach was inspired by Kneser, who calls it \emph{$C$-type} \cite{Kneser}*{section 4}; this notion also appears in Knus--Merkurjev--Rost--Tignol \cite{bookofinv}*{Definition (12.40)} in the case of odd dimension over a field and in Auel \cite{Auel11}*{1.10} in the \'etale case.  This notion of orientation views the even Clifford algebra as a refinement of the discriminant in the classical case and resolves the problems caused by automorphisms of the quadratic algebra---this is necessary to get a composition law, as noticed by other authors.  This notion of orientation seems particularly well-suited to this task: the set of $\scrO_Y$-orientations on a binary quadratic module, if nonempty, is a torsor under $\Aut_{X}(Y)$, so for example if $Y \to X$ is \'etale and $X$ is connected, then $\Aut_{X}(Y) \simeq \Z/2\Z$.  By contrast, the set of rigidifications is a torsor for $\Gm(X)=\scrO_X(X)^\times$, controlling the value bundle.  For a comparison with other approaches, see \cref{sec:sorryhereitis}.   

Our third contribution is the extension of the composition law to the coprimitive case, in particular recovering classical Dirichlet composition over a general base. 

Our fourth contribution is to center the quartet of orthogonal groups $\GO$, $\GSO$, $\SO$, and $\Orthogonal$, thereby fully recovering Gauss composition.  This identifies Wood's \emph{linear binary quadratic forms} under the action of $\GL_1 \times \GL_2$ as binary quadratic forms under similarity.  In particular, previous work generalizing Gauss composition does not address the role of the narrow class group; we see it here as arising from oriented isometries.  

Finally, we generalize Wood's notion of \emph{traceable} in the rank $2$ case to \emph{pseudoregular} in the general case; this notion (but without the name) was also studied in the work of Voight \cite{Voight:quatring}, following Gross--Lucianovic \cite{GL} and Bhargava \cite{Bhargava2}.  


\subsection{Contents}
In \cref{sec:setup}, we give terminology and notation for quadratic modules, framed objects, stacks, and  Clifford algebras. In \cref{sec:pseudo}, we explore pseudoregular modules in the context of the Clifford map.   In \cref{sec:norms}, we define a norm functor proving \Cref{Main Thm GO} and compare to Wood's result. In \cref{sec:dor}, we discuss orientations via the universal Clifford center and prove the corollaries of \Cref{Main Thm GO}.  In \cref{sec: universal composition law}, we develop the universal composition law, including the recovery of classical composition laws.  In \cref{sec:rigid}, we discuss rigidifications and prove the results in \cref{sec:simlattices}.  In \cref{sec: app}, we discuss the applications of the previous results to lattices over Dedekind domains and genera and class sets of primitive binary quadratic modules. In \cref{recentwork}, we review recent work generalizing Gauss composition.  

\subsection{Acknowledgements}

The authors would like to thank Eran Assaf, Asher Auel, ChatGPT 5.5, and Melanie Matchett Wood for helpful conversations.  Voight was supported by the Simons Foundation (550029) and (SFI-MPS-Infra\-structure-00008650). Wu was partially supported by National Science Foundation DMS-2200845 and DMS-2401601.

\section{Background} \label{sec:setup}

We begin with a bit of background and setup.  For further reading, see Voight \cite{Voight:quadmonoid}, Bichsel--Knus \cite{BK}, and Auel \cite{Auel11}.

\subsection{Quadratic modules} \label{sec:quadmodules}

Let $X$ be a scheme. A \defi{quadratic $\scrO_X$-module} is a triple $(\scrM, \scrL, Q)$ where $\scrM$ is a locally free $\scrO_X$-module of finite rank, $\scrL$ is an invertible $\scrO_X$-module, and $Q\colon \scrM\to\scrL$ is a \defi{quadratic map}: for all open sets $U\subseteq X$, we have 
\begin{enumerate}[label={\textup{(\roman*)}}]
    \item $Q(rx)=r^2Q(x)$ for all $r\in\scrO_X(U)$ and $x\in \scrM(U)$;  and
    \item The map $T\colon \scrM(U)\times \scrM(U)\to\scrL(U)$ defined by $T(x,y)=Q(x+y)-Q(x)-Q(y)$ for $x,y\in\scrM(U)$ is $\scrO_X(U)$-bilinear.
\end{enumerate}
We call $T \colon \scrM \times \scrM \to \scrL$ the \defi{associated bilinear form}; we often just write $Q \colon \scrM \to \scrL$ for the triple $(\scrM,\scrL,Q)$ and refer to it as a quadratic module.  As in the introduction, a \defi{quadratic form} is a quadratic module with $\scrL=\scrO_X$.

By $\Sym^k \scrM$ we mean the usual quotient of $\scrM^{\otimes k}$, and by $\Sym_k \scrM$ we mean the submodule of symmetric tensors.  Given $\scrM$ and $\scrL$, a quadratic map $Q\colon\scrM\to\scrL$ is equivalently a global section of the sheaf 
\begin{equation} \label{eqn:sym2sym2}
(\Sym_2 \scrM)^\vee \otimes \scrL \cong \Sym^2 (\scrM^\vee) \otimes \scrL.
\end{equation}
This isomorphism arises via restriction of the natural tensor pairing (see Voight \cite{Voight:quadmonoid}*{section 2}).

A \defi{similarity} between two quadratic modules $Q\colon\scrM\to\scrL$ and $Q'\colon \scrM'\to\scrL'$ is a pair of $\scrO_X$-module isomorphisms $\varphi\colon\scrM\to\scrM'$ and $\lambda\colon\scrL\to\scrL'$ such that $Q'\circ \varphi=\lambda \circ Q$, i.e., the diagram
\begin{equation} \label{eqn:commdiag}
\begin{tikzcd}
	\scrM & \scrL \\
	\scrM' & \scrL'
	\arrow["{Q}", from=1-1, to=1-2]
	\arrow["{\varphi}", swap, from=1-1, to=2-1]
	\arrow["{Q'}", from=2-1, to=2-2]
	\arrow["{\lambda}", from=1-2, to=2-2]
\end{tikzcd}
\end{equation}
commutes.  A \defi{similitude} is a similarity with $\scrL=\scrL'$, and an \defi{isometry} is a similitude with $\lambda=\id$, i.e., a commutative diagram
\begin{equation} \label{eqn:commdiag_isom}
\begin{tikzcd}[
  row sep={0.5ex}
]
	\scrM &  \\
    & \scrL \\
	\scrM' & 
	\arrow["{Q}" pos=0.3, from=1-1, to=2-2]
	\arrow["{\varphi}", swap, from=1-1, to=3-1]
	\arrow["{Q'}" pos=0.3, swap, from=3-1, to=2-2]
\end{tikzcd}
\end{equation}
When $Q=Q'$, then the self-similitudes and self-isometries are represented by closed subgroup schemes of $\Aut(\scrM) \times \Aut(\scrL)$ and $\Aut(\scrM)$, respectively.  

Let $T$ be the associated bilinear form of $Q$.  The determinant of $T$ is the symmetric pairing
\begin{equation}
\det T \colon \tbigwedge^m \scrM \otimes \tbigwedge^m \scrM \to \scrL^{\otimes m}
\end{equation}
given locally by 
\begin{equation} 
(x_1 \wedge \cdots \wedge x_m) \otimes (y_1 \wedge \cdots \wedge y_m) \mapsto \det(T(x_i,y_j))_{i,j}. 
\end{equation}
We define the \defi{(signed) discriminant} of $Q$ to be the quadratic $\scrO_X$-module of rank $1$
\begin{equation} 
\disc Q \colon \tbigwedge^m \scrM \to \scrL^{\otimes m} 
\end{equation}
given by the signed determinant $(-1)^{m(m-1)/2} \det T$.  Equivalently, $\disc Q$ is the corresponding global section of $\scrL^{\otimes m}\otimes(\tbigwedge^m \scrM)^{\otimes{-2}}$.  On an affine open $U=\Spec R$ where $M$ and $L$ are free, this recovers the usual signed determinant of the Gram matrix of $T$, and these local expressions glue.

\begin{example} \label{exm:b24ac}
When $m=2$, twisting by $\scrL^\vee$ gives a quadratic form 
\[ \tbigwedge^2 \scrM \otimes \scrL^\vee \to \scrO_X. \]
If $Q \colon R^2 \to R$ is given by $Q(x,y)=ax^2+bxy+cy^2$ with $a,b,c \in R$, then \begin{equation}
(\disc Q)(e_1 \wedge e_2)=b^2-4ac. 
\end{equation}
\end{example}

\subsection{Algebras}

An \defi{$\scrO_X$-algebra} of rank $n \geq 1$ is an $\scrO_X$-algebra (associative, with $1$, not necessarily commutative) that is locally free of rank $n$ as an $\scrO_X$-module.  For each $n$, let  $\catAlg_n$ be the category of algebras of rank $n$.  

When $n=2$, we use the term \defi{quadratic} $\scrO_X$-algebra.  A quadratic $\scrO_X$-algebra is equivalently given by the pushforward $\pi_*\scrO_Y$ where $\pi\colon Y \to X$ is a locally free morphism of degree $2$.  By a slight abuse of notation, we will write $\scrO_Y$ instead of $\pi_*\scrO_Y$. 

Let $\scrO_Y$ be a quadratic $\scrO_X$-algebra.  Then there is a unique \defi{standard involution} $\overline{\phantom{x}}$ on $\scrO_Y$.  The \defi{discriminant} $\disc \scrO_Y$ is the quadratic map $d \colon \tbigwedge^2 \scrO_Y \to \scrO_X$ corresponding to the linear map $(\tbigwedge^2 \scrO_Y)^{\otimes 2} \to \scrO_X$ defined by 
\[ d((x\wedge y)\otimes(z\wedge w))=(x\overline{y}-\overline{x}y)(z\overline{w}-\overline{z}w)
\] 
for all $x,y,z,w \in \scrO_Y(U)$ with $U \subseteq X$ open \cite{Voight:quadmonoid}*{Proposition 3.13}.  

\begin{example} \label{exm:t24n}
If $Y = \Spec S \to X=\Spec R$ with $S=R[\alpha]=R[x]/\langle x^2-tx+n \rangle$, then $\overline{\alpha}=t-\alpha$ and $(\disc S)(1\wedge \alpha) = t^2-4n$ as usual.
\end{example}

\subsection{Framed atlases and quotient stacks} \label{sec:framed-atlases}

We will see that the categories considered in this paper have the additional structure of a quotient stack coming from a scheme atlas, with a framed universal object.  In this section, we show this for quadratic modules and quadratic algebras.  All stacks are taken over the big Zariski site of schemes, unless otherwise stated; the Zariski topology is enough here because locally free modules and line bundles are Zariski-locally trivial.  

A \defi{frame} for a (free) $\scrO_X$-module $\scrM$ of rank $m \geq 1$ is an $\scrO_X$-basis $(e_1,\dots,e_m)$ with $e_i \in \scrM(X)$, so $\scrM=\scrO_X e_1 \oplus \dots \oplus \scrO_X e_m$.  A \defi{framed quadratic $\scrO_X$-module} is a quadratic module $Q \colon \scrM \to \scrL$ where $\scrM$ and $\scrL$ are framed (of ranks $m$ and $1$, respectively).  A framed similarity of framed quadratic $\scrO_X$-modules is a similarity which preserves the framing.  The automorphism group of a framed quadratic $\scrO_X$-module is therefore trivial.  

Let $\catQMod=\catQMod_m$ be the category of quadratic $\scrO_X$-modules of rank $m \geq 1$ fibered over $\catSch$ the category of schemes.  Let $\Qfr$ be the framed category.  


\begin{lem}\label{lem: universal quadratic form}
Let $R \colonequals \Z[a_{ij}]_{1\leq i\leq j\leq m}$ be a polynomial ring in $m(m+1)/2$ variables over $\Z$.  Then the category $\Qfr$ is represented by $\Spec(R)$, with universal object  
\begin{equation} \label{eqn:univquad}
\begin{aligned}
Q \colon R^m &\to R \\
Q(x_1,\dots,x_m) &= \sum_{1 \leq i \leq j \leq m} a_{ij} x_i x_j.
\end{aligned}
\end{equation}
\end{lem}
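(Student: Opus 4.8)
The plan is to show directly that $Z = \Spec R$ together with the tautological form $Q$ of \eqref{eqn:univquad} represents the functor $\Qfr$, by writing down the natural bijection between $\Hom(X, Z)$ and the set of framed quadratic $\scrO_X$-modules of rank $m$ over $X$, and checking it is functorial. First I would unwind the definitions on the target side: since a framed quadratic $\scrO_X$-module has $\scrM = \scrO_X^m$ with its standard basis and $\scrL = \scrO_X$ with a chosen generator, the quadratic map $Q'$ is, by \eqref{eqn:sym2sym2}, nothing more than a global section of $\Sym^2((\scrO_X^m)^\vee) \otimes \scrO_X \cong \Sym^2((\scrO_X^m)^\vee)$, which in the chosen coordinates is freely and uniquely given by its $m(m+1)/2$ coefficients $b_{ij} = b_{ij}(Q') \in \scrO_X(X)$ for $1 \le i \le j \le m$, namely $Q'(x) = \sum_{i \le j} b_{ij} x_i x_j$. (Here one uses that a quadratic map in the sense of conditions (i)–(ii) is the same as a section of $\Sym^2$ of the dual; this is exactly the content of the discussion around \eqref{eqn:sym2sym2}, and the upper-triangular monomials $x_i x_j$ form a basis of global sections of $\Sym^2((\scrO_X^m)^\vee)$ over an affine chart, hence globally since both sides are free of the same rank.)

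Next I would produce the bijection. Given a morphism $f \colon X \to Z = \Spec R$, equivalently a ring map $R = \Z[a_{ij}] \to \scrO_X(X)$ (when $X$ is affine; in general a compatible system of such maps on an affine cover, i.e.\ a global choice of $f^\sharp(a_{ij}) \in \scrO_X(X)$), we obtain the framed quadratic module $f^* Q$, whose coefficients are precisely $b_{ij} = f^\sharp(a_{ij})$. Conversely, given a framed quadratic $\scrO_X$-module with coefficients $b_{ij} \in \scrO_X(X)$, the assignment $a_{ij} \mapsto b_{ij}$ extends uniquely to a ring homomorphism $R \to \scrO_X(X)$ because $R$ is the free commutative $\Z$-algebra on the $a_{ij}$, hence to a morphism $X \to Z$. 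These two constructions are visibly mutually inverse: pulling back $Q$ along the morphism attached to $(b_{ij})$ returns the form with coefficients $b_{ij}$, and reading off the coefficients of $f^* Q$ returns $f^\sharp(a_{ij})$.

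Finally I would check naturality in $X$, which is the only bookkeeping step: for a scheme morphism $g \colon X' \to X$, pulling back a framed quadratic module along $g$ replaces each coefficient $b_{ij}$ by $g^\sharp(b_{ij})$, and this matches precomposition of the classifying morphism $X \to Z$ with $g$ — both sides correspond to the composite ring map $R \to \scrO_X(X) \xrightarrow{g^\sharp} \scrO_{X'}(X')$. This is exactly the statement that the bijection above is a natural isomorphism of functors $\Qfr \cong \Hom_{\catSch}(-, Z)$, so $Z$ represents $\Qfr$ with universal object $Q$. I expect no real obstacle here; the only point requiring a little care is the identification \eqref{eqn:sym2sym2} between quadratic maps $\scrO_X^m \to \scrO_X$ and sections of $\Sym^2$ of the dual, together with the fact that in a fixed frame these sections have a canonical coordinate description with $m(m+1)/2$ free coefficients — but this is precisely what \Cref{sec:quadmodules} has set up, so I would simply invoke it.
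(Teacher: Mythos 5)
Your proposal is correct and follows essentially the same route as the paper: a framed quadratic module is uniquely determined by its $m(m+1)/2$ coefficients in $\scrO_X(X)$, which are free parameters, and since $R$ is a polynomial ring these correspond exactly to morphisms $X \to \Spec R$; the paper simply records the coefficient extraction and leaves the representability bookkeeping implicit. (One small simplification: your affine-cover caveat is unnecessary, since $\Hom_{\catSch}(X,\Spec R) \cong \Hom_{\mathrm{Ring}}(R,\scrO_X(X))$ holds for arbitrary schemes $X$.)
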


\begin{proof}
Over a scheme $X$, a quadratic module $Q \colon \scrM \to \scrL$ with frame $\beta=(e_1,\dots,e_m)$ and $\scrL=\scrO_X f$ is uniquely determined by the values $Q(e_i)=a_{ii}f$ for $i=1,\dots,m$ and $Q(e_i+e_j)-Q(e_i)-Q(e_j)=a_{ij}f$ with $1 \leq i < j \leq m$, and conversely.
\end{proof}

A change of module basis is given by a left action of $\GL_m$, inducing a right action of $\GL_m$ on the quadratic map, with the rescaling action, i.e., 
\begin{equation} \label{eqn:AUscal}
(Q | (A,u))(x) = uQ(Ax)
\end{equation}
for $A \in \GL_m(R)$ and $u \in R^\times$ for $Q \colon M \to L$ over $R$.  

We refer to Stacks \cite{stacks-project}*{Tags 04UI, 04UV} for background on quotient stacks.  

\begin{cor}
The category $\catQMod_m \simeq [\mathbb{A}^{m(m+1)/2}/(\GL_m \times \Gm)]$ is a stack quotient.
\end{cor}

\begin{proof}
An object of the quotient stack over a test scheme $T$ is an
$(\GL_m\times \Gm)_T$-torsor $P \to T$, together with an
equivariant map $P \to \Spec R$.  The $\GL_m$-part of the torsor is the
frame torsor of a locally free rank $m$ bundle $\scrM$, while the
$\Gm$-part is the frame torsor of a line bundle $\scrL$.  The equivariant map $P \to \Spec R$ is precisely the Zariski descent datum for the universal quadratic form, and hence gives a quadratic map.  Conversely, on a Zariski open cover of $T$, the bundles $\scrM$ and $\scrL$ are trivial, and by \Cref{lem: universal quadratic form}, $Q$ is given by $m(m+1)/2$ functions hence by a
map to $Z$.  On overlaps, changing the basis of $\scrM$ and the
trivialization of $\scrL$ changes these coefficients by the above
$\GL_m\times\Gm$-action.  Thus the local maps glue exactly
to an object of the quotient stack.  The construction is functorial in
$T$, and gives the claimed equivalence.
\end{proof}

In particular, we are interested in the case $m=2$ of binary quadratic modules. A free binary quadratic module $Q \colon \calO_X^2 \to \calO_X$ is defined by $Q(xe_1+ye_2)=Q(x,y)=ax^2+bxy+cy^2$ with $a,b,c \in \calO_X(X)$, and we abbreviate $Q=[a,b,c]$.  In particular, framed binary quadratic modules are parametrized by 
\begin{equation} \label{eqn:Qfr}
\Qfr \simeq \mathbb{A}^3_{\mathbb{Z}}=\Spec \Z[a,b,c]
\end{equation}
in the familiar way, and 
\begin{equation} \label{catbqmodeq}
\catBQMod \simeq [\mathbb{A}^3/(\GL_2 \times \Gm)]
\end{equation}
where $\GL_2$ acts on $\mathbb{A}^3$ in the symmetric square representation and $\Gm$ acts by rescaling the value frame.

Similarly, a \defi{(unital) frame} for an $\scrO_X$-algebra of finite rank is a frame $\beta$ for the underlying $\scrO_X$-module such that $\beta_1=1$.  

Let $\catAlg_m$ denote the stack whose objects over a scheme $X$ are $\scrO_X$-algebras of rank $m \geq 1$ and whose morphisms are $\scrO_X$-algebra isomorphisms.  Let $\catAlg_m^{\textup{fr}}$ be the framed category.  Then $\catAlg_m^{\textup{fr}}$ is the closed subscheme of the affine space of multiplication tables defined by identity and associativity conditions, and $\catAlg_m$ is the quotient stack by the algebraic subgroup of $\GL_m$ which maps the first basis element to itself.  

Again we are primarily interested in the case $m=2$.  The category $\catAlg_2^{\textup{fr}}$ of framed quadratic $\scrO_X$-algebras under morphisms preserving the framing is represented by the affine scheme 
\begin{equation} \label{eqn:quadalguniv}
\catAlg_2^{\textup{fr}} \simeq \Spec \Z[t,n],
\end{equation}
with universal framed quadratic algebra $\Z[t,n][x]/\langle x^2-tx+n\rangle$ with frame $(1,x)$.  A change of frame is of the form $x \mapsto u(x+r)$ with $u \in \scrO_X(X)^\times$ and $r \in \scrO_X(X)$, corresponding to the coordinate change 
\begin{equation}  \label{eqn:coordchange}
(t,n) \mapsto (u(t+2r),u^2(n+tr+r^2)).
\end{equation}
Thus $\catQuad$ can be identified with the stack quotient
\begin{equation} 
\catQuad \simeq [(\Spec \Z[t,n])/(\G_a \rtimes \Gm)].
\end{equation}

We work in the 2-category of stacks over $\catQuad$: a morphism over $\catQuad$ is equipped with a specified 2-isomorphism between the two composites to $\catQuad$.  We suppress this 2-isomorphism when no confusion can result.

We also use the Picard stack $\catPic\simeq B\G_m$ of invertible sheaves, together with the morphism $\catQMod_m\to\catPic$ defined by taking the value bundle.

\subsection{Fibers of morphisms of stacks} \label{sec:fibers}

A key observation that goes a long way to explaining our main results involves fibers of a morphism of stacks.  We briefly review these notions: for more, see the Stacks Project \cite{stacks-project}*{Tags 003O, 0040, 02ZL}.

Let $f \colon \mathcal{X} \to \mathcal{Z}$ be a morphism of stacks.  Given a test scheme $T$ and an object $Z \in \mathcal{Z}(T)$, the \defi{$2$-fiber of $f$ over $Z$} is the stack 
\begin{equation}
\mathcal{X}_Z \colonequals T \times_{\mathcal{Z}} \mathcal{X}
\end{equation}
where the fiber product is the $2$-fiber product.  More precisely, for $T$-scheme $T' \to T$, the groupoid $\mathcal{X}_Z(T')$ has:
\begin{itemize}
\item objects $(X,\iota)$ where $X \in \mathcal{X}(T')$ and $\iota \colon f(X) \xrightarrow{\sim} Z_{T'} = Z \times_{T} T'$ is an isomorphism in $\mathcal{Z}(T')$; and
\item morphisms $(X_1,\iota_1) \to (X_2,\iota_2)$ are isomorphisms $\beta \colon X_1 \to X_2$ in $\mathcal{X}(T')$ such that the diagram 
\begin{equation}
\begin{tikzcd}
	f(X_1) & f(X_2) \\
    Z_{T'} & Z_{T'}
	\arrow["{f(\beta)}", from=1-1, to=1-2]
	\arrow["\iota_1", from=1-1, to=2-1]
	\arrow["\iota_2", from=1-2, to=2-2]
    \arrow[r, equal, from=2-1, to=2-2]
\end{tikzcd}
\end{equation}
commutes, i.e., $\iota_2 \circ f(\beta) = \iota_1$.  
\end{itemize}

The usual fiber of a map of sets is the collection of $x$ such that $f(x)=z$ for $z \in Z$; the $2$-fiber is the groupoid formed by $X$ equipped with a \emph{specified} isomorphism $\iota \colon f(X) \xrightarrow{\sim} Z$.  

The following lemma is key to our categorical point of view, so risking overkill we give a careful complete argument.

\begin{lem} \label{lem:fibres-of-equivalence}
Let $p_{\mathcal X} \colon \mathcal X\to \mathcal Z$ and $p_{\mathcal Y}\colon \mathcal Y\to \mathcal Z$ be stacks fibered in groupoids over a fixed site, and let $f \colon \mathcal X \to \mathcal Y$ be an equivalence over $\mathcal{Z}$.  Then for every test scheme $T$ and every object
$Z\in\mathcal Z(T)$, the induced functor on $2$-fibers
\[
f_Z\colon \mathcal X_Z\to \mathcal Y_Z
\]
is an equivalence of categories.

Moreover, for every morphism $\alpha\colon Z\to Z'$ in
$\mathcal Z(T)$, there are canonical pullback functors $\alpha_{\mathcal X}^*\colon \mathcal X_{Z'}\to \mathcal X_Z$ and $\alpha_{\mathcal Y}^*\colon \mathcal Y_{Z'}\to \mathcal Y_Z$, 
and the square
\begin{equation}
\begin{tikzcd}
\mathcal X_{Z'} \arrow[r,"f_{Z'}"] \arrow[d,"\alpha_{\mathcal X}^*"']
&
\mathcal Y_{Z'} \arrow[d,"\alpha_{\mathcal Y}^*"]
\\
\mathcal X_Z \arrow[r,"f_Z"']
&
\mathcal Y_Z
\end{tikzcd}
\end{equation}
commutes up to a canonical isomorphism.  If $\alpha$ is an
isomorphism, then $\alpha_{\mathcal X}^*$ and
$\alpha_{\mathcal Y}^*$ are equivalences.  
\end{lem}

\begin{proof}
For notational simplicity, we suppose that the triangle over $\mathcal Z$ strictly commutes, so that
$p_{\mathcal Y}f=p_{\mathcal X}$.  The general case is identical after composing the displayed rigidifying isomorphisms with the chosen $2$-isomorphism $p_{\mathcal Y}f\simeq p_{\mathcal X}$.  

We define a functor
\begin{equation}
f_Z\colon \mathcal X_Z\to\mathcal Y_Z
\end{equation}
by $f_Z(X,\iota) = (f(X),\iota)$ on objects and applying $f$ on morphisms.  Since $f\colon\mathcal X\to\mathcal Y$ is an equivalence over
$\mathcal Z$, its base change along $Z\colon T\to\mathcal Z$ is again
an equivalence.  Formally, this follows from
\cite{stacks-project}*{Tag 02XB} with $L=f$, $K=\mathrm{id}_T$, and
$M=\mathrm{id}_{\mathcal Z}$.  Equivalently, one may check on fibers
over $T'\to T$, using the fiberwise criterion for equivalences of
categories fibered in groupoids \cite{stacks-project}*{Tag 003Z}.  

Now let $\alpha\colon Z\to Z'$ be a morphism in $\mathcal Z(T)$.
For $U\to T$, write $\alpha_U\colon Z|_U\to Z'|_U$ for its pullback to $U$.  Define
\begin{equation}
\alpha_{\mathcal X}^*\colon \mathcal X_{Z'}\to\mathcal X_Z
\end{equation}
by $\alpha_{\mathcal X}^*(X,\iota')=(X,\alpha_U^{-1} \circ \iota')$.  The same construction gives $\alpha_{\mathcal Y}^*\colon \mathcal Y_{Z'}\to\mathcal Y_Z$.  These assignments are functorial on morphisms, again by the compatibility condition in the explicit description of the $2$-fiber product.

Compatibility with $f$ is immediate from the definitions: in the
strictly commuting case,
\begin{equation}
f_Z\circ \alpha_{\mathcal X}^*
=
\alpha_{\mathcal Y}^*\circ f_{Z'}.  
\end{equation}

Finally, if $\alpha$ is an isomorphism, then $(\alpha^{-1})^*$ is a
quasi-inverse to $\alpha^*$.
\end{proof}

\subsection{Clifford functor}

For an invertible $\scrO_X$-module $\scrL$, we define $\scrL^\vee\colonequals \HHom_{\scrO_X}(\scrL,\scrO_X)$. 

Let $Q \colon \scrM \to \scrL$ be a quadratic $\scrO_X$-module.  We define the \defi{even tensor $\scrO_X$-algebra} (associated to $\scrM,\scrL$) by 
\begin{equation}
\TTen^0(\scrM;\scrL)\colonequals\displaystyle\bigoplus_{d=0}^{\infty}\scrM^{\otimes 2d} \otimes (\scrL^\vee)^{\otimes d}
\end{equation}
(with the canonical reordering to get a multiplication).  

Let $\scrJ^0(Q)$ be the quasi-coherent two-sided ideal sheaf of $\TTen^0(\scrM;\scrL)$ generated by the sections defined over $U \subseteq X$ open by
\begin{equation}
\begin{aligned}
&\langle x\otimes x\otimes f-f(Q(x)) : x \in \scrM(U), f \in \scrL^\vee(U) \rangle \\
&\quad + \langle x\otimes y\otimes y \otimes z\otimes f\otimes g-f(Q(y))\, x\otimes z\otimes g : x,y,z \in \scrM(U), f,g \in \scrL^\vee(U) \rangle.
\end{aligned}
\end{equation}
We define the \defi{even Clifford algebra} of $Q$ to be the quotient 
\begin{equation}
\Clf^0(Q)\colonequals\TTen^0(\scrM;\scrL)/\scrJ^0(Q).
\end{equation}

\begin{prop}  \label{prop:evencliff}
The even Clifford map defines a morphism of stacks $\catQMod_m \to \catAlg_{2^{m-1}}$, that is, between
\begin{center}
quadratic modules of rank $m$ under similarities
\end{center}
and
\begin{center}
algebras of rank $2^{m-1}$ under algebra isomorphisms.
\end{center}
\end{prop}

\begin{proof}
See Auel \cite{Auel15}*{Proposition 1.2} and for a slightly different but equivalent construction using the Rees algebra, see Bichsel--Knus \cite{BK}*{\S 3}.
\end{proof}

\begin{example}\label{binary even cliff mod O_X}
    In the binary case, we have a canonical isomorphism
    \[ \Clf^0(Q)/\calO_X \cong \tbigwedge^2 \scrM \otimes \scrL^{\vee}\]
    as $\calO_X$-modules.
    
    If $X=\Spec R$ is affine with $S=\Clf^0(Q)$, then we get a noncanonical splitting
    \[ S \simeq R \oplus S/R \simeq R \oplus (\tbigwedge^2 M \otimes L^\vee). \]
    See Voight \cite{Voight:quadmonoid}*{Lemma 3.2, Remark 3.3} for further freeness comments.
\end{example}

Similarly, define the \defi{odd tensor $\scrO_X$-bimodule} by
\begin{equation}
\TTen^1(\scrM;\scrL)\colonequals\displaystyle\bigoplus_{\substack{d=1\\d \text{ odd}}}^\infty\scrM^{\otimes d}\otimes (\scrL^{\vee})^{\otimes \lfloor d/2 \rfloor}.
\end{equation}
Then $\TTen^1(\scrM;\scrL)$ is a graded $\TTen^0(\scrM;\scrL)$-bimodule under the tensor multiplication law. Let $\scrJ^1(Q)$ be the $\TTen^0(\scrM;\scrL)$-sub-bimodule of $\TTen^1(\scrM;\scrL)$ defined over $U\subseteq X$ open by 
\begin{equation}
    \begin{aligned}
     \langle x\otimes x \otimes y\otimes f-f(Q(x))y, y\otimes x\otimes x\otimes f-f(Q(x))y: x,y\in \scrM(U), f\in\scrL^{\vee}(U)\rangle.
    \end{aligned}
\end{equation}
 We define the \defi{odd Clifford bimodule} of $Q$ to be the quotient
\begin{equation}
 \Clf^1(Q)\colonequals\TTen^1(\scrM;\scrL)/\scrJ^1(Q).
\end{equation}
Then under the natural multiplication, $\Clf^1(Q)$ has the structure of a $\Clf^0(Q)$-bimodule; it is locally free of rank $2^{m-1}$ as an $\scrO_X$-module, and its formation commutes with arbitrary base change.

\begin{exm} \label{exm:a3bdiso}
In the framed binary case $Q=[a,b,c]$ over $R$, with value frame fixed and $\alpha \colonequals e_1e_2 \otimes f^\vee$.  Then
\begin{equation}
\Clf^0(Q)\simeq R[\alpha]=R[x]/\langle x^2-bx+ac \rangle 
\end{equation}
Thus on framed atlases, the map $\Clf^0$ is
\begin{equation}
\begin{aligned}
\mathbb A^3_{\mathbb Z} &\to \mathbb A^2_{\mathbb Z} \\
(a,b,c) &\mapsto (b,ac).
\end{aligned}
\end{equation}
The discriminant $b^2-4ac$ is preserved.
\end{exm}

\section{Pseudoregularity} \label{sec:pseudo}

In this section, we explore modules which from the point of view of characteristic polynomials look like the regular module (affording the regular representation).  We then show this holds for the odd Clifford bimodule over the even Clifford algebra.  

\subsection{Pseudoregular modules}

We first work locally.  Let $R$ be a commutative ring (with $1$).  Let $A$ be an $R$-algebra (with $1$, associative, not necessarily commutative) that is free of rank $n \geq 1$ as an $R$-module.  Left multiplication defines an $R$-algebra homomorphism
$\lambda_A \colon A \to \End_R(A) \simeq \M_n(R)$, called the \defi{(left) regular representation} of $A$ (over $R$).  We obtain for each $\alpha \in A$ a (left) characteristic polynomial $c_A(\alpha;T)=\det(T-\lambda_A(\alpha)) \in R[T]$.  

Let $I$ be a left $A$-module.  We say that $I$ is \defi{$R$-rank balanced} if $I$ is also free of rank $n$ as an $R$-module.  

Suppose that $I$ is $R$-rank balanced.  Then again the left $A$-module structure on $I$ yields an $R$-algebra homomorphism $\lambda_I \colon A \to \End_R(I) \simeq \M_n(R)$ and $c_I(\alpha;T) \in R[T]$.  

\begin{definition} \label{defn:pseudoregular_affine}
We say $I$ is $(R,A)$-\defi{pseudoregular} if $I$ is $R$-rank balanced and for all $\alpha \in A$ we have $c_A(\alpha;T)=c_I(\alpha;T)$.  

We say $I$ is \defi{universally $(R,A)$-pseudoregular} if $I \otimes_R R'$ is $(R',A \otimes_R R')$-pseudoregular for all ring homomorphisms $R \to R'$.  
\end{definition} 

The name is justified as $I$ looks like the regular representation of $A$ from the point of view of characteristic polynomials. When clear from context, we will often drop the prefix $(R,A)$ and say simply that $I$ is pseudoregular.

\begin{remark}
One could extend \Cref{defn:pseudoregular_affine} to the case where $M$ is a left $A$-module that is a free $R$-module of rank $nd$ with $d \geq 1$ and ask that $c_M(x;T)=c_A(x;T)^d$.  
\end{remark}

\begin{example}
$A$ is regular hence pseudoregular as an $A$-module.
\end{example}

We record the following useful criterion. 

\begin{lem} \label{lem:pseudoregular_universal}
Let $e_1,\dots,e_m$ be an $R$-basis for $A$ and let $I$ be an $R$-rank balanced left $A$-module.  Let 
\begin{equation}
  R[x] \colonequals R[x_1,\dots,x_m]
\end{equation}
and let $\xi \colonequals x_1e_1+\dots+x_me_m \in A_{R[x]} \colonequals A \otimes_R R[x]$.  Then $I$ is $(R,A)$-universally pseudoregular if and only if 
\begin{equation} \label{eqn:capxi}
  c_{A_{R[x]}}(\xi;T)=c_{I_{R[x]}}(\xi;T) \in R[x][T].
\end{equation}
\end{lem}

We call the element $\xi$ the \defi{universal element} of $A$ (with respect to the given basis).

\begin{proof}
If $I$ is universally pseudoregular, then \eqref{eqn:capxi} holds by definition.  Conversely, suppose that \eqref{eqn:capxi} holds.  Let $R \to R'$ be a ring homomorphism, and let
$\alpha \in A_{R'}$.  Then expressing $\alpha$ in the basis $e_1,\dots,e_m$ for $A_{R'}$ we see it is the specialization of $\xi$ under a unique ring homomorphism $R[x] \to R'$.  Specializing \eqref{eqn:capxi} shows that $c_{A_{R'}}(\alpha;T) = c_{I_{R'}}(\alpha;T) \in R'[T]$.  Since $\alpha$ was arbitrary, $I_{R'}$ is pseudoregular; since $R \to R'$ was arbitrary, $I$ is universally pseudoregular.
\end{proof}

In fact, by a formal calculation universal pseudoregularity is automatic.  

\begin{prop}
$I$ pseudoregular if and only if it is universally pseudoregular.
\end{prop}

\begin{proof}
This follows naturally from results on homogeneous multiplicative polynomial laws \cite{Rob63}.  Suppose that $I$ is pseudoregular.  We recall a consequence of Amitsur's formula for the determinant of the sum of two matrices: if two finite
free representations of an $R$-algebra have the same characteristic
polynomial on an $R$-module generating set of the algebra, then they
have the same characteristic polynomial on every element \cite{Ami80}*{Theorems~A--B}.  See also Chenevier \cite{Che14}*{Lemma~1.12(ii), formula~(1.5)} and
Reutenauer--Sch{\"u}tzenberger \cite{RS87} for some explicit formulas in this case.

We apply this to the universal element, as in \Cref{lem:pseudoregular_universal}.  Let $R[x]=R[x_1,\ldots,x_n]$.  The elements $e_1,\ldots,e_n$ are an $R[x]$-module basis of $A_{R[x]}$.  By the previous paragraph, we get 
\[
c_{A_{R[x]}}(\xi;T)=c_{I_{R[x]}}(\xi;T).
\]
The result follows from \Cref{lem:pseudoregular_universal}.

The converse is immediate.  
\end{proof}

\begin{lem} \label{lem:Ifreepseudo}
The following statements hold.
\begin{enumalph}
\item If $I$ is free of rank $1$ as an $A$-module, then $I$ is universally $(R,A)$-pseudoregular.  
\item If $R \to R'$ is an injective ring homomorphism and $I \otimes_{R} R'$ is (universally) $(R',A \otimes_{R} R')$-pseudoregular, then $I$ is (universally) $(R,A)$-pseudoregular.  
\item If $R$ is a domain with field of fractions $F$, and $A_F$ is a division algebra, then $I$ is universally $(R,A)$-pseudoregular.
\end{enumalph}
\end{lem}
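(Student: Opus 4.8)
The plan is to establish the three parts in the order (a), (b), (c), deducing (c) from the first two; each rests on the fact that the left characteristic polynomial of an element acting on a free module is invariant under $R$-linear change of basis and compatible with base change. For part (a): since $I$ is $R$-rank balanced and free over $A$, comparing $R$-ranks in $I\cong A^{d}$ forces $dn=n$, hence $d=1$ (as $n\geq 1$), so $I\cong A$ as $A$-modules. Fixing such an isomorphism $\varphi\colon A\isomto I$, which is in particular an isomorphism of $R$-modules, one has $\lambda_I(\alpha)\circ\varphi=\varphi\circ\lambda_A(\alpha)$ for all $\alpha\in A$; choosing $R$-bases, $\lambda_I(\alpha)$ and $\lambda_A(\alpha)$ are therefore conjugate in $\M_n(R)$, so $c_I(\alpha;T)=\det(T-\lambda_I(\alpha))=\det(T-\lambda_A(\alpha))=c_A(\alpha;T)$. (The same computation yields $c_{A^{d}}(\alpha;T)=c_A(\alpha;T)^{d}$, the assertion of the preceding remark.)

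For part (b), the essential input is base-change compatibility: since $\lambda_{A\otimes_R R'}(\alpha\otimes 1)=\lambda_A(\alpha)\otimes 1$ and determinants are compatible with the ring map $R\to R'$, the image of $c_A(\alpha;T)$ under $R[T]\to R'[T]$ equals $c_{A\otimes_R R'}(\alpha\otimes 1;T)$, and likewise with $I$ in place of $A$. Applying the pseudoregularity of $I\otimes_R R'$ to the elements $\alpha\otimes 1$ shows that the image of $c_A(\alpha;T)-c_I(\alpha;T)\in R[T]$ in $R'[T]$ vanishes; since $R\to R'$ is faithfully flat, hence injective, the difference itself vanishes. ($R$-rank balancedness of $I$, needed for the conclusion, holds by the standing hypothesis of the subsection.) Thus $I$ is $(R,A)$-pseudoregular.

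For part (c), set $K\colonequals\Frac R$, which is legitimate because $A$ being a domain forces $R$ --- which embeds in $A$ via $r\mapsto r\cdot 1$, an embedding as $A$ is $R$-free of rank $n\geq 1$ --- to be a domain. Since $A$ is $R$-free of finite rank it is integral over $R$, so $A_K\colonequals A\otimes_R K$ is a domain integral over the field $K$ and of $K$-dimension $n$; hence $A_K$ is a field, necessarily $\Frac A$, with $[A_K:K]=n$. Now $I_K\colonequals I\otimes_R K$ is a module over the field $A_K$ of $K$-dimension $n$, so of $A_K$-dimension $n/[A_K:K]=1$; hence $I_K$ is free of rank one over $A_K$ and $K$-rank balanced, and part (a), applied with base $K$, shows $I_K$ is $(K,A_K)$-pseudoregular. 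As $R\to K$ is flat and injective, part (b) descends this to the conclusion that $I$ is $(R,A)$-pseudoregular.

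Parts (a) and (b) are routine --- conjugation-invariance and base-change compatibility of characteristic polynomials. The one step that takes a moment of thought is the reduction in (c): one needs that a domain which is a finite free $R$-algebra has $A\otimes_R\Frac R=\Frac A$ with $[\Frac A:\Frac R]=\rk_R A$, since this is exactly what makes the $A$-module $I\otimes_R\Frac R$ free of rank one and so reduces (c) to (a). I expect that observation, together with keeping track of which ring plays the role of the base when invoking (a) and (b), to be the only delicate point.
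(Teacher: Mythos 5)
Your proof is correct and follows essentially the same route as the paper: (a) by transporting an $R$-basis through an $A$-module isomorphism $I\cong A$, (b) by compatibility of characteristic polynomials with base change followed by descending the equality along $R\to R'$, and (c) by passing to $\Frac R$, observing that $A\otimes_R \Frac R$ is a field so that $I\otimes_R \Frac R$ is free, and then invoking (a) and (b). The one caveat is in (b): you assert $R\to R'$ is faithfully flat, whereas only flatness is hypothesized; what the descent step actually needs is injectivity of $R[T]\to R'[T]$, which the paper's own proof also uses tacitly and which holds in every application (e.g.\ $R\to\Frac R$), so treat it as an additional (satisfied) hypothesis rather than a consequence of flatness.
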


\begin{proof}
For part (a), writing $I=Ae$, an $R$-basis $\beta$ for $A$ yields $\beta e$ an $R$-basis for $I$, and accordingly $[\lambda_A(\alpha)]_\beta=[\lambda_I(\alpha)]_{\beta e}$ so in particular $c_A(\alpha;T)=c_I(\alpha;T)$.  Repeating this over an arbitrary base gives the conclusion.  

For (b), let $I' \colonequals I\otimes_{R} R'$ and $A' \colonequals A\otimes_{R} R'$.  We identify $R$ with its image $R \to R'$.  For $\alpha \in A$, since an $R$-basis for $A$ yields an $R'$-basis for $A_{R'}$, we have $c_{I'}(\alpha;T)=c_{I}(\alpha;T) \in R[T] \subseteq R'[T]$.  Similarly, we have $c_{A'}(\alpha;T)=c_A(\alpha;T) \in R'[T]$. By pseudoregularity we have $c_{I'}(\alpha;T)=c_{A'}(\alpha;T)$, so $c_I(\alpha;T)=c_A(\alpha;T) \in R'[T]$.  Thus $c_I(\alpha;T)=c_A(\alpha;T) \in R[T]$.  We conclude that $I$ is $(R,A)$-pseudoregular.  

For the universal statement, we consider the universal element $\xi \in A_{R[x]}$.  Then under the inclusion $A_{R[x]} \hookrightarrow A_{R'[x]}$, in fact $\xi$ is also the universal element for $A_{R'[x]}$.  By \Cref{lem:pseudoregular_universal} we get $c_{A_{R'[x]}}(\xi;T)=c_{I_{R'[x]}}(\xi;T) \in R'[x][T]$.  As in the previous statement, this equality holds already in $R[x][T]$, which again by \Cref{lem:pseudoregular_universal} implies that $I$ is universally $(R,A)$-pseudoregular.  

For (c), $I_F$ is a rank-balanced module over $A_F$ so $1$-dimensional as a left $A_F$-vector space.  So by part (a), $I_F$ is universally $(F,A_F)$-pseudoregular.  By (b), since $R \hookrightarrow F$ we conclude that $I$ is universally $(R,A)$-pseudoregular.  
\end{proof}

\begin{exm} \label{exm:apeislon}
Let $A \colonequals R[\varepsilon]$ where $\varepsilon^2=0$ and let $I \colonequals R^2$ where $\varepsilon$ acts by $0$ on $I$.  Then $[\varepsilon]_A=\begin{pmatrix} 0 & 0 \\ 1 & 0 \end{pmatrix}$ and $[\varepsilon]_I = 0$, but $c_A(\varepsilon;T)=c_I(\varepsilon;T)=T^2$, so $I$ is pseudoregular.
\end{exm}

\begin{exm} \label{exm:faith}
Let $A \colonequals R \times R$, let $\frakp=R \times \{0\}$ and let $I \colonequals A/\frakp \oplus A/\frakp \simeq R \oplus R$.  Then $\rk_R A = \rk_R I = 2$, but $I$ is not pseudoregular because $\alpha=(a,b)$ has $c_A(\alpha;T)=(T-a)(T-b)$, but $c_I(\alpha;T)=(T-b)^2$.  Indeed, $I$ is not faithful as an $A$-module, as $(1,0)$ annihilates $I$.
\end{exm}


\begin{exm}
    Let $A\subset \M_2(R)$ be the $R$-subalgebra of upper triangular matrices.  Then column vectors $R^2$ with the natural action of left multiplication is a faithful $A$-module.  But $A$ also admits the natural $R$-algebra quotient $A \to R \times R$ by $\begin{pmatrix} a & b \\ 0 & d \end{pmatrix} \mapsto (a,d)$.  Let $I'$ be the further quotient to the second factor and $I \colonequals R^2 \oplus I'$.  Then $c_A(\alpha;T)=\det(T-\alpha)(T-a)=(T-a)^2(T-d)$ but $c_I(\alpha;T)=\det(T-\alpha)(T-d)=(T-a)(T-d)^2$.  Since $A,I \simeq R^3$ as $R$-modules, $I$ is faithful but not pseudoregular as an $A$-module.  
\end{exm}

We now define the global notion, extending Zariski locally.  First, if $A$ is a locally free $R$-algebra of rank $n$ and $I$ is an $A$-module which is locally free of rank $n$ as an $R$-module, then we again have (uniquely defined) characteristic polynomials, so we just repeat \Cref{defn:pseudoregular_affine}.  

Finally, let $X$ be a scheme, let $\scrA$ be a locally free $\calO_X$-algebra.  Let $\scrI$ be an $\scrO_X$-rank balanced $\scrA$-module, i.e., locally free of rank $n$ as an $\scrO_X$-module.

\begin{definition}
We say $\scrI$ is (universally) $(\scrO_X,\scrA)$-\defi{pseudoregular} if there exists an affine open cover $X=\bigcup_i U_i$ such that $\scrI(U_i)$ is (universally) $(\scrO_X(U_i),\scrA(U_i))$-pseudoregular for all $i$.  
\end{definition}

It is straightforward to check that $\scrI$ is (universally) pseudoregular if and only if for all affine open $U \subseteq X$ we have $\scrI(U)$ is (universally) pseudoregular. 

We make similar definitions for right modules and for bimodules (as both a left and right module).

\begin{definition} \label{def:pseudoregcat}
Let $n \in \Z_{\geq 1}$.  We define $\catPSReg_n$, the category of pseudoregular bimodules of rank $n$ over algebras of rank $n$, fibered in groupoids over the category of schemes, as follows:
\begin{itemize}
\item an object is a triple $(X,\scrA,\scrI)$ where $\scrA$ is an $\scrO_X$-algebra of rank $n$ and $\scrI$ is a universally pseudoregular $\scrA$-bimodule;  
\item a morphism $(X,\scrA,\scrI) \to (X',\scrA',\scrI')$ is a pair $(f,\sigma,\psi)$ where $f \colon X\to X'$ is a morphism, $\sigma \colon \scrA \xrightarrow{\sim} f^*\scrA'$ is an $\scrO_X$-algebra isomorphism and $\psi \colon \scrI \xrightarrow{\sim} f^*\scrI'$ is a $\sigma$-semilinear $\scrO_X$-bimodule isomorphism, i.e., 
\[ \psi(\alpha x \beta) = \sigma(\alpha) \psi(x)\sigma(\beta) \]
for all $\alpha,\beta \in \scrA(U)$ and $x \in \scrI(U)$ over $U \subseteq X$.  
\end{itemize}
\end{definition}




\begin{remark}
The possible failure of pseudoregularity to be preserved under pullback shows that it is the notion of universal pseudoregularity which will be preferable in stacky formulations.
\end{remark}

With definitions out of the way, we now prove the main results of this section.

\begin{prop} \label{prop:cliffpseudo}
$\Clf^1(Q)$ is universally pseudoregular as a $\Clf^0(Q)$-bimodule.  
\end{prop}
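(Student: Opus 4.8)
The plan is to reduce the whole statement to a single computation over a field by passing to the universal framed quadratic form, and then to spread the resulting identity of characteristic polynomials back out; we may assume $\scrM\neq 0$. Since pseudoregularity is local on $X$ and invariant under isomorphism, I would first choose an affine open cover $X=\bigcup_i U_i$ with $U_i=\Spec R_i$ over which $\scrM$ and $\scrL$---and hence $\Clf^0(Q)$ and $\Clf^1(Q)$, being the even Clifford algebra and odd Clifford bimodule of a free quadratic module---are free. Trivializing $\scrL$ and using functoriality of the Clifford construction under similarities, we may assume $Q\colon M\to R$ is a framed quadratic form, so by \Cref{lem: universal quadratic form} it is the pullback of the universal framed quadratic form $Q^{\mathrm{u}}$ over $R^{\mathrm{u}}\colonequals\Z[a_{ij}]$ along a ring homomorphism $\phi\colon R^{\mathrm{u}}\to R$. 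Because the even tensor algebra, the Clifford ideals, and the quotients all commute with base change, $\Clf^e(Q)\cong\Clf^e(Q^{\mathrm{u}})\otimes_{R^{\mathrm{u}},\,\phi}R$ for $e=0,1$.

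Next I would settle the universal case over its function field $F^{\mathrm{u}}\colonequals\Frac(R^{\mathrm{u}})$. As $R^{\mathrm{u}}\to F^{\mathrm{u}}$ is flat, \Cref{lem:Ifreepseudo}(b) reduces this to the base change $Q^{\mathrm{u}}_F$ of $Q^{\mathrm{u}}$ to $F^{\mathrm{u}}$, which is nondegenerate (its Gram matrix is a generic symmetric matrix) and therefore represents a unit, e.g.\ $Q^{\mathrm{u}}_F(e_1)=a_{11}$. Working in the full Clifford algebra $\Clf(Q^{\mathrm{u}}_F)$, whose even part is $\Clf^0$ and whose odd part is $\Clf^1$, the vector $v\colonequals e_1$ satisfies $v^2=Q^{\mathrm{u}}_F(v)$ and is thus a unit with $v^{-1}=Q^{\mathrm{u}}_F(v)^{-1}v\in\Clf^1$. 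Hence right multiplication by $v$ is a left-$\Clf^0$-module isomorphism $\Clf^0\isomto\Clf^1$, and left multiplication by $v$ is a right-$\Clf^0$-module isomorphism $\Clf^0\isomto\Clf^1$; so $\Clf^1$ is free of rank $1$ both as a left and as a right $\Clf^0$-module, and \Cref{lem:Ifreepseudo}(a) (together with its right-module analogue) shows $\Clf^1$ is pseudoregular as a $\Clf^0$-bimodule. By \Cref{lem:Ifreepseudo}(b) again, $\Clf^1(Q^{\mathrm{u}})$ is pseudoregular over $\Clf^0(Q^{\mathrm{u}})$.

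It remains to transfer this along the (possibly non-flat) map $\phi$. The point is that pseudoregularity over the infinite integral domain $R^{\mathrm{u}}$ upgrades to a polynomial identity: fixing an $R^{\mathrm{u}}$-basis $\psi_1,\dots,\psi_N$ of $\Clf^0(Q^{\mathrm{u}})$ and the generic element $\Psi=\sum_k t_k\psi_k$ over $R^{\mathrm{u}}[t_1,\dots,t_N]$, the characteristic polynomials $c_{\Clf^0}(\Psi;T)$ and $c_{\Clf^1}(\Psi;T)$ lie in $R^{\mathrm{u}}[t_1,\dots,t_N][T]$ and agree under every specialization $t_k\mapsto r_k\in R^{\mathrm{u}}$, hence are equal, since a nonzero polynomial over an infinite domain cannot vanish at every point of affine space. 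Specializing this identity along $\phi$ together with $t_k\mapsto$ (the coordinates of any $\alpha\in\Clf^0(Q)$ in the induced basis) gives $c_{\Clf^0(Q)}(\alpha;T)=c_{\Clf^1(Q)}(\alpha;T)$, and the same argument for right modules (equivalently, for $\Clf^0(Q)^{\mathrm{op}}$) completes the proof of pseudoregularity on each $U_i$, hence on $X$.

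The step I expect to be most delicate is this last one: pseudoregularity as defined is a pointwise condition over the given base and is not visibly stable under arbitrary base change, so the descent from the universal base to an arbitrary ring $R$ (which may be finite or non-reduced) must be routed through the reformulation as a genuine polynomial identity over $R^{\mathrm{u}}$. This is also why one cannot argue purely over fields via freeness: when $Q=0$ over a field the odd Clifford bimodule is pseudoregular over the even Clifford algebra (the exterior algebra) without being free over it, so it is essential that the generic universal form is nondegenerate.
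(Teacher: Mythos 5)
Your proof is correct and follows essentially the same route as the paper: reduce to the universal framed quadratic form, show $\Clf^1$ is generically free over $\Clf^0$ over the fraction field via the invertible vector $e_1$ (using that $a_{11}$ is generically a unit), descend with the flatness lemma, and then specialize a universal identity of characteristic polynomials to an arbitrary base. The only packaging difference is that the paper adjoins the coefficients of the generic element of $\Clf^0$ as extra indeterminates $z_{ij}$ to the universal base and specializes the resulting identity directly, whereas you first prove pseudoregularity over $\Z[a_{ij}]$ and then recover the same polynomial identity by a Zariski-density argument over that infinite domain---a harmless detour leading to the same specialization step.
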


\begin{proof}
We first work with the universal framed quadratic module $Q$ \eqref{eqn:univquad}.  Let $A \colonequals \Clf^0(Q)$ and $I \colonequals \Clf^1(Q)$ be the resulting (universal) framed even Clifford algebra and odd Clifford bimodule.  
Since $R$ is a domain (as a polynomial ring over $\Z$), the map $R \to F \colonequals \Frac R$ is injective.  But $I_{F}$ is free over $A_F$ with basis $e_1$ since $e_1^2=a_{11} \in F^\times$, so by \Cref{lem:Ifreepseudo}(a), $I_{F}$ is universally pseudoregular.  By \Cref{lem:Ifreepseudo}(b), $I$ is universally pseudoregular.  A similar argument holds on the right.

The general case follows by specialization.  As the notion is local, we reduce to the case of an affine cover where the objects are free.  So let $Q \colon M \to L$ be a quadratic module with $M = R^m$ and $L=R$.  Let $A \colonequals \Clf^0(Q)$ and $I \colonequals \Clf^1(Q)$.  Upon a choice of basis, $Q$ arises from the universal framed quadratic module and by functoriality $A$ and $I$ arise from base change of the universal framed even Clifford algebra and odd Clifford bimodule.  We just showed that these are universally pseudoregular, so $A$ and $I$ are pseudoregular.  Repeating over an arbitrary base change (or working with a universal element), we conclude that $A$ and $I$ are in fact universally pseudoregular.  
\end{proof}

\begin{prop} \label{Clif functor similarity}
The association of the odd Clifford bimodule over the even Clifford algebra defines a functor $\catQMod_m \to \catPSReg_{2^{m-1}}$ fibered over $\catAlg_{2^{m-1}}$, i.e., between 
\begin{center}
quadratic modules of rank $m$ under similarities
\end{center}
and
\begin{center}
universally pseudoregular bimodules over algebras of rank $2^{m-1}$ \\ under semilinear bimodule isomorphisms.
\end{center}
\end{prop}

\begin{proof}
For the statement without pseudoregularity, see \cite{Auel15}*{Proposition 1.5} together with \Cref{prop:cliffpseudo}.  The functorial properties in \cite{Auel11}*{Section 4} show that similarities associate to semilinear module isomorphisms between pairs $(\scrA, \scrI)$.
\end{proof}

\subsection{Quadratic pseudoregular modules}

We now specialize to the case of rank $m=2$, first exhibiting a universal object.
An $\scrO_X$-rank balanced $\scrO_Y$-module $\scrI$ is \defi{framed} if both $\scrO_Y$ and $\scrI$ are framed.

\begin{example} \label{exm:goodcliff}
For the framed binary quadratic form $[a,b,c]=ax^2+bxy+cy^2$ over $\scrO_X$, we have $\Clf^0(Q)=\scrO_X \oplus \scrO_X e_1 e_2$ and $\Clf^1(Q)=\scrO_X e_1\oplus \scrO_X e_2$.  We have $(e_1e_2)e_1 = e_1(b-e_1e_2)=be_1 - ae_2$ and $(e_1e_2)e_2=ce_1$ so $[e_1e_2]_{(e_1,e_2)}=\begin{pmatrix} b & c \\ -a & 0 \end{pmatrix}$ (left action on columns).
\end{example}

Let $\scrI$ be a framed quadratic $\scrO_Y$ rank-balanced module, with $1,\alpha$ an $\scrO_X$-basis for $\scrO_Y$ and $e_1,e_2$ an $\scrO_X$-basis for $\scrI$.  With \Cref{exm:goodcliff} in view, we say that $\alpha, e_1,e_2$ is a \defi{good frame} for $\scrO_Y$ and $\scrI$ if $\alpha e_2 \in \scrO_X e_1$. 

\begin{example}
For $S=R[\alpha] \simeq R[x]/\langle x^2-bx+ac \rangle$, the $S$-module $S=I$ has good frame $1,\alpha$ (for $S$) and $1,b-\alpha$ (for $I$), noting $\overline{\alpha}=b-\alpha$.  Indeed, then $[\alpha]_I = \begin{pmatrix} b & ac \\ -1 & 0 \end{pmatrix}$.  
\end{example}

\begin{lem} \label{lem:goodclifflem}
Every framed rank-balanced module over a free quadratic algebra admits a good frame.  
\end{lem}

\begin{proof}
Following Wood \cite{Wood}*{proof of Theorem 2.1}, if $\alpha e_2 = xe_1 + ye_2$ then replacing $\alpha \leftarrow \alpha - y$ we obtain a good frame.
\end{proof}

\begin{remark}
See also Gross--Lucianovic \cite{GL} and Voight \cite{Voight:quatring} for similar notions of \emph{good basis} in the ternary case.
\end{remark}

\begin{lem}\label{lem:autgoodframe}
Over the good-framed parameter scheme, the stabilizer of the good-frame condition inside $\GL_2\times(\G_a\rtimes\G_m)$ is isomorphic to $\GL_2\times\G_m$; the projection to $(\varphi,u)$ is an isomorphism, with $r$ uniquely determined by $(\varphi,u)$.  
\end{lem}

\begin{proof}
Suppose that $1, \alpha', e_1', e_2'$ is a frame for $S$ and $I$. Then $e_1'=pe_1+qe_2$ and $e_2'=se_1+te_2$ for some $\varphi=\begin{pmatrix}
    p & s\\
    q & t
\end{pmatrix}\in \GL_2(R)$ with $d=pt-qs\in R^\times$ and $\alpha'=u(\alpha+r)$ for some $u\in R^\times$ and $r\in R$. We have 
\begin{equation} \label{eqn:alphae2}
\alpha'e_2'=u(\alpha+r)(se_1+te_2)=u(sb+ct+rs)e_1+u(-as+rt)e_2.
\end{equation}
Thus the frame is good if and only if $\alpha'e_2' \in Re_1' = R(pe_1+qe_2)$ if and only if $u(sb+ct+rs)=p\lambda$ and $u(-as+rt)=q\lambda$ for some $\lambda\in R$.  Eliminating $\lambda$ gives 
\begin{equation} 
\begin{aligned}
p(-as+rt) &= (sb+ct+rs)q \\
r(pt-sq) = rd &= asp+sbq+ctq.
\end{aligned}
\end{equation}
Therefore, $r=d^{-1}(asp+sbq+ctq)$ is uniquely determined by $\varphi$ and $u$.  Conversely, assume $r$ is given by this expression. Substituting back into \eqref{eqn:alphae2} gives $pz=qw$ with $z\colonequals -as+rt$ and $w \colonequals sb+ct+rs$. Then we have $ptz=(d+sq)z=tqw$, and thus $z=qd^{-1}(tw-sz)$. Similarly, $w=pd^{-1}(tw-sz)$. Taking $\lambda=ud^{-1}(tw-sz)$ and running the argument in reverse, we have a change of good frame.
\end{proof}

\begin{cor}\label{good frame independence}
 Let $S$ be a free quadratic $R$-algebra and $I$ be a pseudoregular $S$-module with a good frame $\alpha, e_1, e_2$.  Let $[\alpha]_{(e_1,e_2)}=\begin{pmatrix}
     b & c \\
     -a & 0
 \end{pmatrix}$.  
Then the ideal $\langle a,b,c\rangle\subseteq R$ is independent of the choice of good frame. 
\end{cor}

\begin{proof}
Picking up the computation in the previous lemma, we have 
\begin{equation}  \label{eqn:solveforr}
r=d^{-1}(asp+sbq+ctq)\in \langle a, b,c\rangle.
\end{equation}

Now $\alpha'=u(\alpha+r)$ gives 
\begin{equation} \label{eqn:uraction}
[\alpha']_{(e_1,e_2)}=u\begin{pmatrix}
    b+r & c\\
    -a & r
\end{pmatrix}
\end{equation}
where each entry is in $\langle a,b,c\rangle$. Since $\varphi\in \GL_2(R)$, each entry in $[\alpha']_{(e_1', e_2')}=\varphi^{-1}[\alpha']_{(e_1, e_2)}\varphi$  is still in $\langle a,b,c\rangle$. Applying the same argument with the two good frames interchanged gives the reverse containment. 
\end{proof}

The ideals $\langle a,b,c\rangle$ obtained on good-framed opens are compatible under restriction and independent of the chosen good frame by \Cref{good frame independence}; hence they glue to an ideal sheaf $\scrC\subseteq\scrO_X$, called the \defi{content} of $\scrI$.

Indeed, the quadratic case is very special, as the following lemma indicates.  

\begin{prop} \label{prop:traceable}
Let $\scrO_Y$ be a quadratic $\scrO_X$-algebra and let $\scrI$ be a rank-balanced $\scrO_Y$-module.  Then $\scrI$ is pseudoregular if and only if for all open $U \subseteq X$ and all $\alpha \in \scrO_Y(U)$, we have $\Tr_{\scrO_Y(U)}(\alpha) = \Tr_{\scrI(U)}(\alpha)$.
\end{prop}

The condition in \Cref{prop:traceable} is called \defi{traceable} by Wood \cite{Wood}.

\begin{proof}
The assertion is local on $X$, so we may suppose that $S$ and $I$ are free $R$-modules of rank $2$.

If $I$ is pseudoregular, then comparing the coefficient of $T$ in
$c_S(\alpha;T)=c_I(\alpha;T)$ gives $\Tr_S(\alpha)=\Tr_I(\alpha)$ for all $\alpha\in S$.

For the converse, suppose that $\Tr_S(\alpha)=\Tr_I(\alpha)$ for all $\alpha\in S$.  
By \Cref{lem:goodclifflem}, choose a good frame $\gamma,e_1,e_2$.  As in \Cref{good frame independence}, we may write
\[
[\gamma]_{(e_1,e_2)}
=
\begin{pmatrix}
b & c\\
-a & 0
\end{pmatrix},
\]
so $\gamma e_1=be_1-ae_2$ and $\gamma e_2=ce_1$.  Let $c_S(\gamma;T)=T^2-tT+n \in R[T]$.  By the trace hypothesis,
\[
t=\Tr_S(\gamma)=\Tr_I(\gamma)=b.
\]
Applying the relation $\gamma^2-t\gamma+n=0$ to $e_2$, and substituting above, we get
\begin{equation}
\gamma^2e_2=\gamma(ce_1)=c(be_1-ae_2)=bce_1-ace_2
\end{equation}
therefore
\begin{equation}
0=(bce_1-ace_2)-tce_1+ne_2
  =(b-t)ce_1+(n-ac)e_2.
\end{equation}
Since $e_1,e_2$ is a basis (or using that $t=b$), this gives $n=ac$.  This shows at least that $c_S(\gamma;T)=c_I(\gamma;T)=T^2-bT+ac$.  

Now let $\alpha=x+y\gamma\in S$.  On $I$, its matrix is
\[
[\alpha]_I
=
x+y[\gamma]_I
=
\begin{pmatrix}
x+by & cy\\
-ay & x
\end{pmatrix}.
\]
Hence $\Tr_I(\alpha)=2x+by$ and $\Nm_I(\alpha)=x^2+bxy+acy^2$.  On the other hand, since $\gamma^2-b\gamma+ac=0$, the norm of $\alpha=x+y\gamma$ in $S$ is
\[ \Nm_S(\alpha)
=
(x+y\gamma)(x+y(b-\gamma))
=
x^2+bxy+acy^2.
\]
Thus $\Nm_S(\alpha)=\Nm_I(\alpha)$ for all $\alpha\in S$.  Thus $c_S(\alpha;T)=c_I(\alpha;T)$ for all $\alpha\in S$, so $I$ is pseudoregular.
\end{proof}

Let $\catGoodBPSReg$ be the category of good-framed pseudoregular modules over quadratic algebras.  

\begin{cor}\label{cor: universal quadratic algebra}
    Let $R \colonequals \Z[t,n,a,b,c]/\langle t-b, n-ac \rangle \cong \Z[a,b,c]$. Then $\catGoodBPSReg$ is represented by $\Spec R \simeq \mathbb{A}^3$, with universal object $(S,I)$ where 
    \[ S=R[\alpha] \simeq R[x]/\langle x^2-tx+n\rangle = R[x]/\langle x^2-bx+ac \rangle \] 
    and $I=Re_1 \oplus Re_2$ with $[\alpha]_{(e_1,e_2)}=\begin{pmatrix} b & c \\ -a & 0 \end{pmatrix}$.  
\end{cor}

\begin{proof}
Proven in \Cref{prop:traceable} (as in \cite{Wood}*{proof of Theorem 1.4}): there are no automorphisms of a framed pseudoregular quadratic module and each object is uniquely determined by the data $t,n,a,b,c$; the relations come from the fact that the characteristic polynomial matches, so $b=t$ and $n=ac$.  
\end{proof}

We recall  $\catBQMod \simeq [\mathbb{A}^3/(\GL_2 \times \Gm)]$ \eqref{catbqmodeq} (action given in \eqref{eqn:AUscal}) and the Clifford functor (\Cref{Clif functor similarity}).

\begin{prop} \label{catquotient}
The Clifford functor induces an equivalence of quotient stacks 
\[ \catBQMod \xrightarrow{\sim} \catPsReg_2\] 
over $\catQuad$ that preserves discriminants of objects.
\end{prop}

\begin{proof}
In light of \Cref{lem:goodclifflem} and \Cref{cor: universal quadratic algebra}, it suffices to show that the action of $\GL_2 \times \Gm$ on $\catGoodBPSReg$ matches that of \eqref{eqn:AUscal}.  Indeed, $\GL_2$ still acts by the symmetric square representation (the odd Clifford bimodule is just the original binary quadratic module).  And $\Gm$ acts by the same (rescaling) character $(a,b,c) \cdot u = (ua,ub,uc)$ for $\alpha \leftarrow u\alpha$ as in \eqref{eqn:uraction}.  

The discriminant of the universal framed binary quadratic module is $b^2-4ac$ (\Cref{exm:b24ac}) matching the discriminant $t^2-4n=t^2-4ac$ of the universal framed quadratic ring (\Cref{exm:t24n}) as part of a good frame, and these descend to the quotient.
\end{proof}

\Cref{catquotient} proves \Cref{Main Thm GO}, but without the norm as an inverse---we turn to this in the next section. 

\section{Norms}  \label{sec:norms}

In this section, we define the norm functor as a pseudoinverse to the Clifford functor, establishing its properties and comparing it to the functor of Wood.

\subsection{Norm functor}

Let $\calO_Y$ be a quadratic $\calO_X$-algebra and let $\scrI$ be a $\calO_Y$-module which is locally free as an $\calO_X$-module of rank $2$ (not yet pseudoregular).  

\begin{prop} \label{prop:norm}
There exists a unique quadratic $\scrO_X$-module 
\[ E=E_{\scrI} \colon \scrI \otimes \scrO_Y \to \tbigwedge^2 \scrI \otimes \scrO_Y/\scrO_X \]
with the property that
\begin{equation} \label{eqn:conduniq}
E(x \otimes \gamma) = (\gamma x \wedge x) \otimes \overline{\gamma}
\end{equation}
for all $x \in \scrI(U)$ and $\gamma \in \scrO_Y(U)$ with $U \subseteq X$ open.  The quadratic module $E$ descends to a binary quadratic module
\[ E \colon \scrI \otimes \scrO_Y/\scrO_X \to \tbigwedge^2 \scrI \otimes \scrO_Y/\scrO_X. \]
\end{prop}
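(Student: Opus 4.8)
The plan is to prove both existence and uniqueness by working locally and exploiting that $\scrO_Y/\scrO_X$ is an invertible $\scrO_X$-module. Since the assertion is local on $X$, I would first reduce to the affine case $X=\Spec R$ with $\scrI$ and $\scrO_Y$ free, and choose $\gamma_0\in\scrO_Y(X)$ so that $1,\gamma_0$ is an $\scrO_X$-basis of $\scrO_Y$; then the image $\bar\gamma_0$ trivializes $\scrO_Y/\scrO_X$, and the map $\scrI\to\scrI\otimes\scrO_Y/\scrO_X$, $x\mapsto x\otimes\bar\gamma_0$, is an isomorphism. Consequently every section of $\scrI\otimes\scrO_Y/\scrO_X$ is uniquely of the form $x\otimes\bar\gamma_0$, and applying \eqref{eqn:conduniq} with $\gamma=\gamma_0$ forces
\[ E(x\otimes\bar\gamma_0)=(\gamma_0 x\wedge x)\otimes\bar\gamma_0. \]
This proves uniqueness on each such open, hence globally; and since any two such local candidates agree on overlaps by this same uniqueness, they glue, so it suffices to construct $E$ locally.

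For existence I would take the displayed formula as the definition of $E$ on such an affine open and check that it is a quadratic map $\scrI\otimes\scrO_Y/\scrO_X\to\tbigwedge^2\scrI\otimes\scrO_Y/\scrO_X$: it is well defined as a function via the isomorphism $\scrI\cong\scrI\otimes\scrO_Y/\scrO_X$ above; it is homogeneous of degree $2$ because $\gamma_0(rx)\wedge(rx)=r^2(\gamma_0 x\wedge x)$; and its associated bilinear form is
\[ T_E(x\otimes\bar\gamma_0,\,y\otimes\bar\gamma_0)=(\gamma_0 x\wedge y+\gamma_0 y\wedge x)\otimes\bar\gamma_0, \]
which is $\scrO_X$-bilinear because the $\scrO_Y$-action and the wedge product are. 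These are routine verifications.

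The one point that needs care—and the only thing I would call an obstacle, though a mild one—is that the $E$ just built must satisfy \eqref{eqn:conduniq} for \emph{all} $\gamma\in\scrO_Y(U)$, not merely for the chosen $\gamma_0$. Writing $\gamma=r+r_1\gamma_0$ with $r,r_1\in\scrO_X(U)$, so that $\bar\gamma=r_1\bar\gamma_0$ and $x\otimes\bar\gamma=(r_1 x)\otimes\bar\gamma_0$, both sides of \eqref{eqn:conduniq} reduce to $r_1^2(\gamma_0 x\wedge x)\otimes\bar\gamma_0$, the contribution of $r$ on the right vanishing since $x\wedge x=0$. Conceptually, this is precisely why the value line bundle is twisted by $\scrO_Y/\scrO_X$ rather than $\scrO_Y$: the assignment $(\gamma,x)\mapsto\gamma x\wedge x$ is $\scrO_X$-linear in $\gamma$ and annihilated by $\gamma\in\scrO_X$, hence descends to $\scrO_Y/\scrO_X$, which makes the construction independent of the auxiliary choice of $\gamma_0$ and globally coherent. (Alternatively, one could phrase the argument intrinsically by producing the corresponding $\scrO_X$-linear map $\Sym_2\scrI\to\tbigwedge^2\scrI\otimes(\scrO_Y/\scrO_X)^\vee$ and invoking $\Sym_2(\scrI\otimes\scrO_Y/\scrO_X)\cong\Sym_2\scrI\otimes(\scrO_Y/\scrO_X)^{\otimes 2}$, but the local approach above is cleaner.)
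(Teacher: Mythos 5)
Your proposal is correct and follows essentially the same route as the paper: reduce to an affine open where $\scrO_Y=\scrO_X\oplus\scrO_X\gamma_0$ and $\scrI$ are free, observe that every element of $\scrI\otimes\scrO_Y/\scrO_X$ is of the form $x\otimes\bar\gamma_0$ so that \eqref{eqn:conduniq} forces the formula (uniqueness), verify the formula defines a quadratic map satisfying \eqref{eqn:conduniq} for all $\gamma$, and glue by uniqueness. The only cosmetic difference is that the paper additionally chooses a basis $e_1,e_2$ of $\scrI$ and records the explicit coordinate expression $-cx_1^2+(a-d)x_1x_2+bx_2^2$ (reused later), whereas you check quadraticity basis-freely via homogeneity and the polarization.
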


\begin{proof}
It is enough to show this locally, gluing by uniqueness.  So we work over $X=\Spec R$ with $S$ a free quadratic $R$-algebra, and $I$ an $S$-module free of rank $2$ over $R$.  Let $I=Re_1 \oplus Re_2$ and $S=R \oplus R\alpha$, and let $x \in I$ and $\gamma=r+s\alpha \in S$ with $r,s \in R$.  Since 
\begin{equation} 
\gamma x \wedge x = (r+s\alpha)x \wedge x = (s\alpha)x \wedge x = s(\alpha x \wedge x) 
\end{equation}
the map indeed descends to $I \otimes S/R$.
\begin{equation} 
I \otimes S/R \simeq R(e_1 \otimes \alpha) \oplus R(e_2 \otimes \alpha)
\end{equation}
and an arbitrary element is of the form $x \otimes \alpha$ with $x=x_1 e_1 + x_2 e_2$.  In particular, the map is uniquely defined by \eqref{eqn:conduniq}.  We similarly have 
\begin{equation} 
\tbigwedge^2 I \otimes S/R = R((e_1 \wedge e_2) \otimes \alpha)
\end{equation}
and
\[ E(x \otimes \alpha) = (\alpha x \wedge x) \otimes \alpha \]
as a map of sets $E \colon I \otimes S/R \to \tbigwedge^2 I \otimes S/R$.

To see it as an explicit quadratic map, we may suppose as in \Cref{lem:goodclifflem} that $\alpha,e_1,e_2$ is a good frame, so $[\alpha]_\beta = \begin{pmatrix} b & c \\ -a & 0 \end{pmatrix}$ with $a,b,c \in R$.  
Identifying standard bases as usual, we have 
\begin{equation} \label{eqn:x2yw}
\begin{aligned}
\alpha x \wedge x &=[\alpha]_\beta  [x]_\beta \wedge [x]_\beta = \begin{pmatrix} bx_1+cx_2 \\ -ax_1 \end{pmatrix} \wedge \begin{pmatrix} x_1 \\ x_2 \end{pmatrix}\\
& = (bx_1+cx_2)x_2 - (-ax_1)(x_1))(e_1 \wedge e_2) = E(x_1,x_2)(e_1 \wedge e_2) 
\end{aligned}
\end{equation}
where
\begin{equation} \label{eqn:e1e2}
E(x_1,x_2) \colonequals ax_1^2+bx_1x_2+cx_2^2
\end{equation}
is indeed a quadratic form, giving
\begin{equation} \label{eqn:x2yzq0}
E(x \otimes \alpha) = E(x_1,x_2) ((e_1 \wedge e_2) \otimes \alpha)
\end{equation}
and finishing the proof.
\end{proof}

We call the map $E$ in \Cref{prop:norm} the \defi{canonical exterior form} (associated to $\scrI$, over $\scrO_Y$).  To restore the domain as $\scrI$, we twist; let 
\begin{equation}  \label{eqn:Nipic}
\scrN(\scrI) \colonequals \tbigwedge^2 \scrI \otimes (\scrO_Y/\scrO_X)^{\vee} \cong (\tbigwedge^2 \scrI \otimes \scrO_Y/\scrO_X) \otimes (\scrO_Y/\scrO_X)^{\vee 2}.
\end{equation}
We define the \defi{norm map} to be the twist by $(\scrO_Y/\scrO_X)^\vee$:
\begin{equation}
N_\scrI \colonequals E_\scrI \otimes (\scrO_Y/\scrO_X)^\vee \colon \scrI \to \scrN(\scrI)
\end{equation}

It is straightforward to check that $\scrN$ defines a functor $\scrN \colon \catPsReg_2 \to \catPic$.  

\begin{remark}
The norm functor is defined on all $\scrO_X$-rank balanced $\scrO_Y$-modules, not necessarily pseudoregular.  This is analogous to the role of the \emph{exceptional rings} (which are not quaternion rings) in the case of ternary quadratic forms \cites{GL,Voight:quatring}; this suggests a generalization in higher rank.  
\end{remark}

The following lemma shows the value modules are compatible with the tensor product, at least in the invertible case.

\begin{lem} \label{lem:toomanywedges}
Suppose $\scrI,\scrI'$ are invertible over $\scrO_Y$.  Then there is a unique $\scrO_X$-module isomorphism 
\begin{equation} \label{eqn:omega}
\tbigwedge_{\scrO_X}^2 (\scrI) \otimes_{\scrO_X} \tbigwedge_{\scrO_X}^2 (\scrI') \xrightarrow{\sim} \tbigwedge_{\scrO_X}^2 (\scrI \otimes_{\scrO_Y} \scrI') \otimes_{\scrO_X} \tbigwedge_{\scrO_X}^2 (\scrO_Y) 
\end{equation}
with the property that
\begin{equation} \label{eqn:xwedgexxp}
(\gamma x \wedge x) \otimes (\gamma x' \wedge x') \mapsto \bigl(\gamma (x \otimes x') \wedge (x \otimes x')\bigr) \otimes (1 \wedge \gamma)
\end{equation}
for all $x \in \scrI(U)$, $x' \in \scrI'(U)$, and $\gamma \in \scrO_Y(U)$.  In particular, the map \eqref{eqn:omega} induces a canonical isomorphism
\begin{equation}  \label{eqn:secondstatni}
\scrN(\scrI) \otimes_{\scrO_X} \scrN(\scrI') \cong \scrN(\scrI \otimes_{\scrO_Y} \scrI').
\end{equation}
In particular, $\scrN \colon \Pic Y \to \Pic X$ is a group homomorphism.  
\end{lem}

\begin{proof}
This statement could be seen as part of a much more general formalism of norm and determinant functors \cite{GNR}; to be self-contained we sketch a direct proof.  We first prove the statement in the local case; then by uniqueness, the map glues.  So let $S=R \oplus R \alpha$ be a free quadratic $R$-algebra and let $I=Se$ and $I'=Se'$ be free $S$-modules of rank $1$.  Then  
$\tbigwedge^2 I \otimes \tbigwedge^2 I' \simeq R(\omega_I \otimes \omega_{I'})$ where $\omega_I \colonequals \alpha e \wedge e$ and similarly $\omega_{I'} \colonequals \alpha e' \wedge e'$.  We then also have $\tbigwedge^2 (I \otimes_S I') \otimes \tbigwedge^2 S \simeq R(\omega_{I \otimes I'} \otimes \omega_S)$ where $\omega_S \colonequals 1 \wedge \alpha$.  The proposed map \eqref{eqn:xwedgexxp} is defined by $\omega_I \otimes \omega_{I'} \mapsto \omega_{I \otimes I'} \otimes \omega_S$, and we observe immediately that it is an $R$-module isomorphism.  To conclude then we show it is well-defined.  First, replacing $\alpha \leftarrow u\alpha+r$ with $u \in R^\times$ and $r \in R$, all of $\omega_{I},\omega_{I'},\omega_{I \otimes I'},\omega_S$ are scaled by $u^2$, giving the same map.  If we replace $e \leftarrow \gamma e$ with $\gamma \in S^\times$, since e.g.\ $\alpha(\gamma e \wedge \gamma e) = (\det \gamma)(\alpha e \wedge e)$ we rescale both $\omega_{I}$ and $\omega_{I \otimes I'}$ by $\det \gamma$, again giving the same map; and the same with replacing $e' \leftarrow \gamma' e'$.  

The second statement \eqref{eqn:secondstatni} follows from the canonical isomorphism $\tbigwedge_{\scrO_X}^2 \scrO_Y \cong \scrO_Y/\scrO_X$ after tensoring with $((\scrO_Y/\scrO_X)^\vee)^{\otimes 2}$.  
\end{proof}

\begin{lem} \label{lem:isnorm}
If $\scrI$ is pseudoregular, then the norm map has 
\begin{equation} \label{eqn:NIAx}
N_{\scrI}(\gamma x) = \Nm(\gamma) N_{\scrI}(x)
\end{equation} 
for all $x \in \scrI(U)$ and $\gamma \in \calO_Y(U)$.
\end{lem}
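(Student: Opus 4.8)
The plan is to verify the identity locally, where it collapses to a short computation in the exterior square governed only by the quadratic relation on a local generator of $\scrO_Y$. Since both $N_\scrI$ and the norm form $\Nm$ on $\scrO_Y$ are defined locally and compatibly with restriction, it suffices to treat an affine open $U = \Spec R$: write $S = \scrO_Y(U) = R \oplus R\alpha$ with $\alpha^2 - t\alpha + n = 0$, so that $\sigma(\alpha) = t - \alpha$ and $\Nm(\alpha) = n$, and let $I = \scrI(U)$ be free of rank $2$ over $R$ with its $S$-action. Transporting the canonical identification $\scrI \cong (\scrI \otimes \scrO_Y/\scrO_X) \otimes (\scrO_Y/\scrO_X)^\vee$ through the twist defining $N_\scrI = E_\scrI \otimes (\scrO_Y/\scrO_X)^\vee$, and invoking the defining property \eqref{eqn:conduniq} of $E_\scrI$, I obtain the clean local formula
\[ N_I(x) = (\alpha x \wedge x) \otimes \bar\alpha^\vee, \]
where $\bar\alpha$ is the image of $\alpha$ in $S/R$ and $\bar\alpha^\vee \in (S/R)^\vee$ is the dual generator; here $\alpha x \wedge x$ depends only on $\bar\alpha$ and not on the chosen lift, exactly as in the proof of \Cref{prop:norm}, since $Rx \wedge x = 0$.

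With this formula in hand, and since the $\scrO_Y$-action on $\scrI$ leaves the twisting factor $\bar\alpha^\vee$ untouched, the claim $N_I(\gamma x) = \Nm(\gamma) N_I(x)$ becomes equivalent to the identity
\[ \alpha(\gamma x) \wedge \gamma x = \Nm(\gamma)\,(\alpha x \wedge x) \quad\text{in } \tbigwedge^2 I \]
for all $\gamma \in S$ and $x \in I$. Writing $\gamma = r + s\alpha$ with $r, s \in R$, I would expand $\gamma x = rx + s(\alpha x)$ and, using $\alpha^2 = t\alpha - n$, also $\alpha(\gamma x) = (r+st)(\alpha x) - sn\,x$. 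The alternating bilinearity of $\wedge$ then kills the $\alpha x \wedge \alpha x$ and $x \wedge x$ terms and collapses the product to $\bigl(r(r+st) + s^2 n\bigr)(\alpha x \wedge x) = (r^2 + rst + s^2 n)(\alpha x \wedge x)$. On the other hand $\Nm(\gamma) = \gamma\sigma(\gamma) = (r + s\alpha)\bigl(r + s(t - \alpha)\bigr) = r^2 + rst + s^2 n$, so the two sides agree and the lemma follows.

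I do not anticipate a genuine obstacle: the heart of the matter is a two-line exterior-algebra computation. The only step deserving real care is the first one — correctly pushing the twist by $(\scrO_Y/\scrO_X)^\vee$ through the canonical isomorphism so that the local expression for $N_\scrI$ carries no spurious, $\gamma$-dependent line-bundle factor — together with the (immediate) observation that $\alpha x \wedge x$ is independent of the lift of $\bar\alpha$. An alternative to the explicit computation would be to invoke uniqueness from \Cref{prop:norm}, but the direct calculation is shorter and records the coordinate formula \eqref{eqn:e1e2} already available.
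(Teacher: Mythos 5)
Your proof is correct and follows essentially the same route as the paper's: discard the twist by $(\scrO_Y/\scrO_X)^\vee$ so that the claim reduces to the multiplicativity of the canonical exterior form $E_\scrI$, then verify this by a direct local computation. The only (cosmetic) difference is that you expand $\alpha(\gamma x)\wedge \gamma x$ basis-free using the relation $\alpha^2 = t\alpha - n$ in $\scrO_Y(U)$ together with the alternating property of $\wedge$, whereas the paper substitutes the coordinates of $\gamma x$ into the explicit quadratic form \eqref{eqn:e1e2} and factors; your variant is, if anything, slightly cleaner since it produces $\Nm(\gamma)$ directly from the quadratic relation in $\scrO_Y$ rather than from the matrix of the action on $\scrI$.
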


\begin{proof}
The twist in $N$ does not affect property \eqref{eqn:NIAx}, so it is equivalent to show it for $E_{\scrI}$.  We can check this property locally, so we refer to \eqref{eqn:x2yzq0}. Suppose that $\alpha, e_1, e_2$ is a good frame for $S$ and $I$ with $\beta=\{1,\alpha\}$. Let $\gamma=s+r\alpha$ with $r,s \in R$, so that $[\gamma]_{\beta} = s+r[\alpha]_{\beta}$ and 
\begin{equation} 
[\gamma]_{\beta} \begin{pmatrix} x_1 \\ x_2 \end{pmatrix} = 
\begin{pmatrix} (rb+s)x_1+rcx_2 \\ -rax_1+sx_2 \end{pmatrix} = \begin{pmatrix} x_1' \\ x_2' \end{pmatrix}
\end{equation}  
Then we have
\[ E(\gamma x \otimes \alpha) = E(x_1',x_2') ((e_1 \wedge e_2) \otimes \alpha), \]
and thus
\begin{equation}
\begin{aligned}
E(x_1',x_2') &= a(x_1')^2 + bx_1'x_2' + c(x_2')^2 \\
  &= 
(acr^2+brs + s^2)(ax_1^2+bx_1x_2+cx_2^2) \\
&= \Nm(r\alpha+s)E(x_1,x_2)
\end{aligned}
\end{equation}
where the final equality uses pseudoregularity.  
\end{proof}

\begin{exm} \label{exm:ournormisanorm}
Suppose $\scrI=\scrO_Y$ is trivial.  Then $\scrN(\scrI)=\tbigwedge^2 \scrI \otimes (\scrO_Y/\scrO_X)^\vee = \tbigwedge^2 \scrO_Y \otimes (\scrO_Y/\scrO_X)^\vee \cong \scrO_X$.  
\Cref{lem:isnorm} then implies that $N_\scrI$ is just the usual norm $\Nm \colon \scrO_Y \to \scrO_X$.  
\end{exm}

\begin{cor} \label{cor:norm}
If $Q \colon \scrM \to \scrO_X$ represents $u \in \scrO_X(X)^\times$, then $Q$ is similar to the norm on $\Clf^0(Q)$.
\end{cor}

\begin{proof}
After rescaling by $u$, we may suppose $Q(e)=1$ with $e \in \scrM(X)$.  We then check that $\scrM=\scrO_Y e$ and apply \Cref{lem:isnorm}.
\end{proof}

\begin{remark}
In \cite{Kneser}, a quadratic $R$-module $Q\colon M\to L$ is called \defi{of type $C$} if $C$ is a quadratic $R$-algebra, $M$ is a projective $C$-module of rank 1, and $Q(c\alpha)=\nrd(c)Q(\alpha)$ for all $c\in C$ and $\alpha\in M$, where $\nrd$ is the reduced norm on $C$.
Kneser shows the following.  Suppose that $\Ann(Q(M))\colonequals\{r\in R\colon rQ(M)=0\}=\{0\}$. Then there is a unique $R$-algebra homomorphism $\beta\colon\Clf^0(Q)\to C$ such that $\beta(c)\alpha=c\alpha$ for all $c\in\Clf^0(Q)$ and $\alpha\in M$. If $Q$ is primitive, then $\beta$ is an isomorphism. Our notion basically equips the modules with the type given by the Clifford map.
\end{remark}

\subsection{Main theorem} \label{sec:mainthem}

\begin{thm} \label{normfunct}
The norm $\scrI \mapsto N_\scrI$ defines a functor $\catPSReg_2 \to \catBQMod$ over $\catQuad$, namely from
\begin{center}
pseudoregular modules over quadratic algebras under semilinear module isomorphisms
\end{center}
to
\begin{center}
binary quadratic modules under similarities. 
\end{center}
\end{thm}

In fact, as we saw above the norm is defined on the larger category of rank-balanced modules over quadratic algebras.

\begin{proof}
In \Cref{prop:norm}, we see that objects correspond to objects. To check morphisms, let $\psi \colon \scrI \to \scrI'$ be a semilinear $\scrO_Y$-module isomorphism with respect to $\sigma\in \Aut_{X}(Y)$; we need to check that the diagram
\begin{equation}
\begin{tikzcd}
	\scrI & \scrN(\scrI) \\
	\scrI' & \scrN(\scrI')
	\arrow["{N_\scrI}", from=1-1, to=1-2]
	\arrow["{\psi}", from=1-1, to=2-1]
	\arrow["{N_{\scrI'}}", from=2-1, to=2-2]
	\arrow["{\wedge^2\psi \otimes \sigma^\vee}", from=1-2, to=2-2]
\end{tikzcd}
\end{equation}
commutes so that $(\psi,\wedge^2 \psi \otimes \sigma^\vee)$ defines a similarity.  It is enough to check this for $E$ instead, to do so locally, and confirm
\begin{equation}
\begin{aligned}
(\wedge^2 \psi \otimes \sigma)(E(x \otimes \alpha)) &= (\psi(\alpha x) \wedge \psi(x)) \otimes \sigma(\alpha) \\ &= (\sigma(\alpha) \psi(x) \wedge \psi(x)) \otimes \sigma(\alpha) = E'(\psi(x) \otimes \sigma(\alpha))
\end{aligned}
\end{equation}
as in the proof of \Cref{prop:norm}.  

\end{proof}

We now conclude the proof of the first main theorem, which we restate.

\begin{thm} \label{Main Thm GO-inpaper}
The Clifford and norm functors define an equivalence of stacks  
\begin{equation}  
\catBQMod \xrightarrow{\sim} \catBPSReg 
\end{equation}
over $\catQuad$.  This equivalence preserves discriminants of objects.   
\end{thm}

\begin{proof}
The statement for just the Clifford functor was proven in \Cref{catquotient} and for the norm in \Cref{normfunct}.  To show that they are mutually inverse, we simply observe that on the universal good-framed object in \Cref{cor: universal quadratic algebra}, we computed in \eqref{eqn:e1e2} that the resulting norm is exactly $Q=[a,b,c]$.  
\end{proof}

\begin{cor}\label{cor: framed equivalence GSO}
    The Clifford and norm functors restrict to an equivalence of stacks $$\Qfr_2 \xrightarrow{\sim} \catPsReg_2^{\textup{gfr}}, $$
    both functors being represented by $\mathbb{A}^3=\Spec\Z[a,b,c]$.
\end{cor}
\begin{proof}
    Immediate from \Cref{Main Thm GSO} and \Cref{cor: universal quadratic algebra}.
\end{proof}

\subsection{Involutions}

In this section, we discuss two natural involutions on the category of pseudoregular $\scrO_Y$-modules and how they appear in framed coordinates.

We begin with the dual.  Let $\scrI$ be a rank-balanced $\scrO_Y$-module.  At the risk of abusing notation, we define $\scrI^\vee\colonequals \HHom_{\scrO_X}(\scrI,\scrO_X)$, also a rank-balanced $\scrO_Y$-module.  When $\scrI$ is invertible over $\scrO_Y$, then $\scrI^\vee \cong \HHom_{\scrO_Y}(\scrI,\scrO_Y)$.  

\begin{lem}\label{dual is pseudoregular}
Suppose $\scrI$ is pseudoregular.  Then $\scrI^\vee$ is pseudoregular.  
\end{lem}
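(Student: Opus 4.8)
The plan is to reduce to the affine case in which all modules are free, and then to observe that passing to the $\scrO_X$-dual replaces each multiplication operator by its transpose, changing neither its characteristic polynomial nor its trace. Since pseudoregularity is local, it suffices to work over an affine open $U = \Spec R$ over which $S \colonequals \scrO_Y(U) = R \oplus R\gamma$ is free, with $\gamma^2 - t\gamma + n = 0$ and standard involution $\sigma(\gamma) = t - \gamma$, and over which $I \colonequals \scrI(U)$ is free of rank $2$ over $R$; write $I^\vee = \Hom_R(I,R)$, equipped with its natural $S$-module structure (precomposition, $(\gamma f)(x) = f(\gamma x)$, which is a left action since $S$ is commutative; or, if one prefers the twist by the standard involution, $(\gamma\cdot f)(x) = f(\sigma(\gamma)x)$ --- both conventions are treated identically below).

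Fix a basis $\beta$ of $I$ with dual basis $\beta^\vee$ of $I^\vee$, and for $\alpha \in S$ let $M \colonequals [\lambda_I(\alpha)]_\beta \in \M_2(R)$. With respect to $\beta^\vee$, the operator $\lambda_{I^\vee}(\alpha)$ is represented by the transpose of $M$ (in the twisted convention, by the transpose of $[\lambda_I(\sigma(\alpha))]_\beta$). A square matrix and its transpose have the same characteristic polynomial, so $c_{I^\vee}(\alpha; T) = c_I(\alpha; T)$; combined with the pseudoregularity of $I$ this gives $c_{I^\vee}(\alpha;T) = c_I(\alpha;T) = c_S(\alpha;T)$ for every $\alpha \in S$, whence $I^\vee$ is $(R,S)$-pseudoregular, and gluing over an affine cover yields that $\scrI^\vee$ is pseudoregular. (In the twisted convention one instead gets $c_{I^\vee}(\alpha;T) = c_I(\sigma(\alpha);T) = c_S(\sigma(\alpha);T)$, and $c_S(\sigma(\alpha);T) = c_S(\alpha;T)$ since both $\Tr_S$ and $\Nm_S$ are $\sigma$-invariant; the conclusion is unchanged.) Alternatively, \Cref{lem:traceable} shortens the argument further: it is enough to check $\Tr_{I^\vee}(\alpha) = \Tr_I(\alpha) = \Tr_S(\alpha)$, which is immediate since the trace is transpose-invariant.

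There is no substantial obstacle here. The only points that need attention are the bookkeeping of the $\scrO_Y$-module structure on $\scrI^\vee$ (left versus right action, and the possible intervention of the standard involution) and the elementary fact that a matrix and its transpose share a characteristic polynomial.
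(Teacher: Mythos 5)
Your argument is correct and is essentially the paper's own proof: both reduce to a free affine local situation, observe that multiplication on $\scrI^\vee$ is represented by the transpose of the matrix for $\scrI$, and conclude since transposition preserves characteristic polynomials. Your extra remarks on the involution-twisted convention and the shortcut via \Cref{lem:traceable} are fine but not needed.
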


\begin{proof}
Since pseudoregularity is a local property, we may work locally on an affine open subset. Let $R$ be a commutative ring, $S=R\oplus R\alpha$ be a quadratic $R$-algebra, and $I=Re_1\oplus Re_2$ be a pseudoregular $S$-module. Then $I^\vee=\Hom_R(I, R)=Re^\vee_1\oplus Re^\vee_2$ where $e^\vee_j(e_i)=\delta_{ij}.$ Let $\lambda_I\colon S \to \End_R(I)\simeq M_2(R)$ and $\lambda_{I^\vee}\colon S\to \End_R(I^\vee)\simeq M_2(R)$. For each $x\in S$, we have $\lambda_I(x)=\lambda^\intercal_{I^\vee}(x)$ and thus $c_S(x; T)=c_I(x; T)=c_{I^\vee}(x;T). $ This implies that $I^\vee$ is pseudoregular. 
\end{proof}

\begin{prop}\label{dual functor autoequivalence}
The map $\scrI \mapsto \scrI^\vee$ is a contravariant autoequivalence of $\catPSReg$.  
\end{prop}

\begin{proof}
By \Cref{dual is pseudoregular}, objects are mapped to objects. A given morphism $f\colon \scrI\to \scrJ$ of $\scrO_Y$-modules is mapped to the dual morphism $f^\vee \colon \scrJ^\vee\to \scrI^\vee$ given by $f^\vee(\phi)=\phi\circ f$. There is a canonical evaluation $\scrO_X$-module isomorphism $\ev\colon \scrI \to \scrI^{\vee\vee}$ given by $\ev(x)=(\phi\mapsto \phi(x))$. Hence, the map is a contravariant equivalence. 
\end{proof}

\begin{remark}
    The dual functor is contravariant, so it reverses the direction of the morphisms.  However, every morphism (e.g. $\scrO_Y$-module isomorphism or similarity) we use is an isomorphism, and thus this yields compatible morphisms under duality. 
\end{remark}

Next, let $\bar{}\hspace{0.1cm}\colon \scrO_Y\to \scrO_Y$ be the standard involution of $\scrO_Y$. We define the conjugate twist $\olsi{\scrI}$ to be the same underlying $\scrO_X$-module, but with $\scrO_Y$-action $s\cdot x=\bar{s}x$ for all $s\in \scrO_Y$ and $x\in \scrI$.

\begin{lem}\label{conjugate is pseudoregular}
    Suppose that $\scrI$ is pseudoregular. Then $\olsi{\scrI}$ is pseudoregular. 
\end{lem}
\begin{proof}
We work locally again. Let $R$ be a commutative ring, $S=R\oplus R\alpha$ be a quadratic $R$-algebra with standard involution $\sigma$, and $I=Re_1\oplus Re_2$ be a pseudoregular $S$-module. For each $x\in S$, we have $$c_{I^\sigma}(x;T)=c_I(\bar{x};T)=c_S(\bar{x};T)=c_S(x;T)$$ since $x$ and $\bar{x}$ have the same trace and norm in $S$. Hence, $I$ is pseudoregular. 
\end{proof}

\begin{prop}\label{bar functor autoequivalence}
The map $\scrI \mapsto \olsi{\scrI}$ is a covariant autoequivalence of the category of pseudoregular $\scrO_Y$-modules.  
\end{prop}
\begin{proof}
    By \Cref{conjugate is pseudoregular}, objects are mapped to objects. For a given morphism $f\colon \scrI\to \scrJ$ of $\scrO_Y$-modules, since $$f(s\cdot x)=f(\bar{s}x)=\bar{s}f(x)=s\cdot f(x),$$ the same underlying $\scrO_X$-linear map $f\colon \olsi{\scrI}\to \olsi{\scrJ}$ is also $\scrO_Y$-linear, and thus the construction is functorial. Also, we have the canonical identification $\olsi{\olsi{\scrI}}=\scrI$. 
\end{proof}

The two involutions above commute, i.e., 
$${\olsi{\scrI}}^\vee\simeq\olsi{\scrI^\vee}.$$ Indeed, both sides have the same underlying $\scrO_X$-module $\HHom_{\scrO_X}(\scrI, \scrO_X)$, and the $\scrO_Y$-action is given by postcomposition with $\sigma$. Thus, up to the variance of the duality functor, we obtain four operations: 
$$\scrI, \olsi{\scrI}, \scrI^\vee, \olsi{\scrI}^\vee.$$

We now spell out how involutions operate on the associated binary quadratic modules in the framed situation. Let $S=R\oplus R\alpha$ be a free quadratic $R$-algebra with $\alpha^2-b\alpha+ac=0$. Let $I=Re_1\oplus Re_2$ be a pseudoregular $S$-module with good frame $\alpha, e_1, e_2$ such that $[\alpha]_{(e_1, e_2)}=\begin{pmatrix}
    b & c\\
    -a & 0
\end{pmatrix}$. Then the associated canonical exterior form is $E_I=[a,b,c]$. In $\olsi{I}$, the same algebra generator $\alpha$ acts as $b-\alpha$ acted on $I$. To restore a good frame, we set $f_1=e_2$ and $f_2=e_1$. This gives $[\alpha]_{(f_1, f_2)}=\begin{pmatrix}
    b & a\\
    -c & 0
\end{pmatrix}$, which yields $E_{\olsi{I}}=[c,b,a]$.

A good frame for $S$ and $I^\vee$ is $\alpha, e^\vee_1, e^\vee_2$ with $[\alpha]_{(e^\vee_1, e^\vee_2)}=\begin{pmatrix}
    b & -a\\
    c & 0\\
\end{pmatrix}$, which gives $E_{I^\vee}=[-c, b, -a]$. Twisting by conjugation yields $E_{\olsi{I}^\vee}=[-a,b,-c]$.

The change of frame given by $(x,y)\mapsto (-y,x)$ (without changing the $S$-orientation) gives an oriented isometry between $E_I$ and $[c,-b,a]$, and scaling by $-1$ we obtain $E_{I^\vee}$.  In other words, $E_I$ is oriented similar to $E_{I^\vee}$.

We could also do this for $E_{\olsi{I}}=[c,b,a] \simeq [a,-b,c]$ and $E_{\olsi{I}^\vee}$.  However, when the similitude factor is restricted as in  \Cref{exm:narrowclass}, this operation is no longer allowed.

\begin{remark}
Suppose now that $X=\Spec R$ with $R$ a Dedekind domain and $F$ its field of fractions. Let $S$ be a quadratic $R$-order with field of fractions $K$, and let $I$ be an invertible fractional $S$-ideal. Then the two involutions yield the conjugate ideal $\olsi{I}$ and the inverse ideal $I^{-1}$. 

Given a prime $\mathfrak{p}\subseteq R$, we have $I_\mathfrak{p}=\alpha_\mathfrak{p}S_\mathfrak{p}$ for some $\alpha_\mathfrak{p}\in K_\mathfrak{p}$. Multiplication by $\alpha_\mathfrak{p}$ gives an $R_\mathfrak{p}$-linear isomorphism $m_{\alpha_\mathfrak{p}}\colon S_\mathfrak{p}\xrightarrow{\sim}I_\mathfrak{p}$. Taking the determinant yields $$\tbigwedge^2 I_\mathfrak{p}\simeq \det(m_{\alpha_\mathfrak{p}})\tbigwedge^2 S_\mathfrak{p}=\Nm_{K/F}(\alpha_\mathfrak{p})\tbigwedge^2 S_\mathfrak{p}.$$ Therefore, we have $$\scrN(I)_\mathfrak{p}=\tbigwedge^2 I_\mathfrak{p}\otimes (S_\mathfrak{p}/R_\mathfrak{p})^\vee\simeq \Nm_{K/F}(\alpha_\mathfrak{p})(\tbigwedge^2S_\mathfrak{p}\otimes (S_\mathfrak{p}/R_\mathfrak{p})^\vee)\simeq \Nm_{K/F}(\alpha_\mathfrak{p})R_\mathfrak{p},$$ and thus $$\scrN(I)_\mathfrak{p}\otimes_{R_\mathfrak{p}} S_\mathfrak{p}\simeq \Nm_{K/F}(\alpha_\mathfrak{p})S_\mathfrak{p}=I_\mathfrak{p}\olsi{I}_\mathfrak{p}.$$ This globalizes to $$I\olsi{I}\simeq\Nm_{K/F}(I)\otimes_RS,$$ so $\olsi{I}\simeq \Nm_{K/F}(I)I^{-1}$. The conjugate ideal coincides with the inverse only after trivializing the norm ideal $\Nm_{K/F}(I)$. In particular, over $R=\Z$ where $\Nm_{K/\Q}(I)$ is always principal, conjugation induces inversion, but this does not hold over a general Dedekind domain. 
\end{remark}

\subsection{Comparison to Wood} \label{sec:wood}

Wood \cite{Wood} defines a \defi{linear binary quadratic form} as a global section $f\in\Sym^2M\otimes L$, and Wood's Proposition 6.1 shows that $f$ yields a corresponding quadratic $\scrO_X$-module $Q\colon\scrM^\vee\to L$. The dual ensures the preservation of the discriminant between $f$ and its corresponding quadratic algebra (see \cite{Wood}*{Theorem 1.6}).  We therefore anticipate, in comparing to Wood, we will need to replace $\scrM$ with $\scrM^\vee$, see \eqref{eqn:sym2sym2}.

Let $\scrL \colonequals (\bigwedge^2\scrI)^\vee\otimes(\scrO_Y/\scrO_X)^\vee$. 
Wood \cite{Wood}*{(1)} defines the map \begin{equation}
\label{Woodeq1}
\begin{aligned}
\Psi \colon \scrL^\vee &\to \Sym^2\scrI \\
(x\wedge y)\otimes\gamma &\mapsto \gamma y\otimes x-\gamma x\otimes y
\end{aligned}
\end{equation}
As in \cref{sec:quadmodules}, we translate this map into our (more classical) notion of binary quadratic modules. Taking the dual yields a map $\Psi^\vee\colon (\Sym^2\scrI)^\vee \to \scrL^{\vee\vee}$ given by $\Psi^\vee(f)(\theta)=f(\Psi(\theta))$ for all $f\in (\Sym^2\scrI)^\vee$ and $\theta \in \scrL^\vee$. The canonical isomorphism $(\Sym_2(\scrI^\vee))^\vee\cong (\Sym^2\scrI)^\vee$ yields a binary quadratic $\scrO_X$-module $Q_{\scrI^\vee}\colon \scrI^\vee\to \scrL$. The map $\scrI\mapsto Q_{\scrI^\vee}$ defines a contravariant functor which maps an $\scrO_Y$-module isomorphism $\psi\colon \scrI\to \scrI'$ to the similarity $(\psi^\vee, (\wedge^2\psi)^\vee\otimes \id)$ from $Q_{\scrI'^\vee}$ to $Q_{\scrI^\vee}$. 

\begin{remark}
\eqref{Woodeq1} allows us to directly obtain the corresponding binary quadratic module without passing through induced linear binary quadratic forms via \cite{Wood}*{Proposition 6.1}.
\end{remark}

\begin{prop} 
The composition of the dual functor and the norm functor is naturally isomorphic to the functor $\scrI \to Q_{\scrI^\vee}$, both functors being contravariant.
\end{prop}

\begin{proof} 
Let $\xi_{\scrI} \colon \tbigwedge^2 \scrI^\vee\to (\tbigwedge^2 \scrI)^\vee$ be the $\scrO_X$-module isomorphism defined by 
\begin{equation}
\xi_\scrI(\theta \wedge \eta)=u\wedge v\mapsto \theta(v)\eta(u)-\theta(u)\eta(v). 
\end{equation}
We claim that $(\id, \xi\otimes \id)$ is a similarity from $N_{\scrI^\vee}$ to $Q_\scrI$, i.e. the diagram \[\begin{tikzcd}
	\scrI^\vee & \tbigwedge^2\scrI^\vee \otimes (\scrO_Y/\scrO_X)^\vee\\
	\scrI^\vee & (\tbigwedge^2\scrI)^\vee\otimes(\scrO_Y/\scrO_X)^\vee
	\arrow["{N_{\scrI^\vee}}", from=1-1, to=1-2]
	\arrow["{\id}", from=1-1, to=2-1]
	\arrow["{Q}", from=2-1, to=2-2]
	\arrow["{\xi_\scrI\otimes \id}", from=1-2, to=2-2]
\end{tikzcd}\]
commutes. It is enough to show this locally. Let $U=\Spec R$. Let $S=\scrO_Y(U)$ be a free quadratic $R$-algebra, and let $I=\scrI(U)$ be an $S$-module free of rank 2 over $R$ with good frame $\alpha, e_1, e_2$ such that $[\alpha]_{(e_1,e_2)}=\begin{pmatrix} b & c \\ -a & 0 \end{pmatrix}$. Then $\alpha, e^\vee_1, e^\vee_2$ is a good frame for $S$ and $I^\vee$, and we have $[\alpha]_{(e^\vee_1, e^\vee_2)} = \begin{pmatrix} b & -a \\ c & 0 \end{pmatrix}$, which gives  $N_{I^\vee}(x,y)=(-cx^2+bxy-ay^2)(e^\vee_1\wedge e^\vee_2)\otimes\alpha^\vee$.

Let $L=(\tbigwedge^2I)^\vee\otimes (S/R)^\vee$. Applying \eqref{Woodeq1}, we get the map $\Psi|_U \colon L^\vee\to \Sym^2 I$ given by  $\Psi|_U((e_1\wedge e_2)\otimes \alpha)=\alpha e_2\otimes e_1-\alpha e_1\otimes e_2=ce_1\otimes e_1-be_1\otimes e_2+ae_2\otimes e_2$. This yields the map $\Psi^\vee|_U\colon (\Sym^2I)^\vee\to L$ given by $\Psi^\vee|_U((e_1\otimes e_1)^\vee)=c(e_1 \wedge e_2)^\vee\otimes \alpha^\vee$, $\Psi^\vee|_U((e_1\otimes e_2)^\vee)=-b(e_1\wedge e_2)^\vee\otimes \alpha^\vee$, and $\Psi^\vee|_U((e_2\otimes e_2)^\vee)=a(e_1 \wedge e_2)^\vee\otimes \alpha^\vee$. From the isomorphism $(\Sym_2(I^\vee))^\vee\cong (\Sym^2 I)^\vee$, we obtain the quadratic module $Q\colon I^\vee\to L$ given by $Q(xe^\vee_1+ye^\vee_2)=(cx^2-bxy+ay^2)(e_1\wedge e_2)^\vee\otimes \alpha^\vee$. Since $\xi_I|_U(e^\vee_1\wedge e^\vee_2)=-(e_1\wedge e_2)^\vee$, we have $(\xi_I|_U\otimes \id)(N_{I^\vee}(x,y))=Q(x,y)$.

Next, let $\scrI'$ be another $\scrO_Y$-module which is locally free of rank 2 as an $\scrO_X$-module. Let $\psi\colon \scrI\to \scrI'$ be an $\scrO_Y$-module isomorphism. By \Cref{normfunct}, the pair $(\psi^\vee, \wedge^2\psi^\vee\otimes \id)$ defines a similarity from $N_{\scrI'^\vee}$ to $N_{\scrI^\vee}$. We want to show that the diagram 
\[\begin{tikzcd}
	N_{\scrI'^\vee} &&& Q_{\scrI'} \\
	\\
	N_{\scrI^\vee} &&& Q_\scrI
	\arrow["{(\id, \,  \xi_{\scrI'}\otimes\id)}", from=1-1, to=1-4]
	\arrow["{(\psi^\vee, \, \wedge^2\psi^\vee\otimes\id)}"', from=1-1, to=3-1]
	\arrow["{(\psi^\vee, \, (\wedge^2\psi)^\vee\otimes \id)}", from=1-4, to=3-4]
	\arrow["{(\id, \, \xi_\scrI\otimes \id)}"', from=3-1, to=3-4]
\end{tikzcd}\]
commutes. We have $(\psi^\vee\circ\id)(\scrI'^\vee)=\scrI^\vee=(\id\circ  \psi^\vee)(\scrI'^\vee)$ and
\begin{equation}
    \begin{aligned}
        (((\wedge^2\psi)^\vee\otimes\id)\circ (\xi_{\scrI'}\otimes\id))(\scrN(\scrI'^\vee))&=((\wedge^2\psi)^\vee\otimes \id)((\tbigwedge^2\scrI')^\vee \otimes(\scrO_Y/\scrO_X)^\vee)\\
    &= (\tbigwedge^2\scrI)^\vee\otimes (\scrO_Y/\scrO_X)^\vee\\
    &= (\xi_\scrI\otimes \id)(\tbigwedge^2\scrI^\vee\otimes(\scrO_Y/\scrO_X)^\vee)\\
    &= ((\wedge^2\psi^\vee\otimes \id)\circ(\xi_\scrI\otimes\id))(\scrN(\scrI'^\vee))
    \end{aligned}
\end{equation}
as claimed.
\end{proof}

\begin{remark}
    Although the functor defined by Wood is covariant, by \Cref{dual functor autoequivalence} we work equally well with a contravariant functor.  
\end{remark}

\section{Orientations} \label{sec:dor}

In this section, we prove the theorem corresponding to oriented similarities by studying the $2$-fiber.

\subsection{Orientations} \label{sec:orient}

We now organize quadratic modules according to the invariant provided by the universal Clifford center.  

Recall the universal framed quadratic module and its even Clifford algebra in the proof of \Cref{prop:cliffpseudo}.  Given a quadratic $\scrO_X$-module $Q\colon \scrM \to \scrL$, the center of the universal even Clifford algebra specializes to the even Clifford algebra of $Q$, which we call the \defi{universal Clifford center} of $Q$, denoted $\scrZ(Q)$.  By construction, the formation of the universal Clifford center is functorial for similarities and commutes with base change.  

\begin{lem}
If $\scrM$ is a locally free $\scrO_X$-module of even rank, then $\scrZ(Q)$ is a quadratic $\scrO_X$-algebra.  
\end{lem}

\begin{proof}
Indeed, it is enough to check this for the center of the universal even Clifford algebra: for details, see \cite{Auel15}*{Proposition 1.2}.  
\end{proof}

Therefore the even Clifford functor (\Cref{prop:evencliff}) also factors naturally over $\catQuad$.  

\begin{definition}
Let $\scrO_Y$ be a quadratic $\scrO_X$-algebra.  An $\scrO_Y$-\defi{orientation} of a quadratic $\scrO_X$-module $Q \colon \scrM \to \scrL$ is an $\calO_X$-algebra isomorphism $\iota \colon\scrZ(Q) \xrightarrow{\sim} \scrO_Y$.  An \defi{orientation} of $Q$ is an $\calO_Y$-orientation for some $Y$.  
\end{definition}

\begin{definition}
Let $Q'\colon\scrM' \to \scrL'$ be another quadratic $\scrO_X$-module with $\scrO_Y$-orientation $\iota'\colon\scrZ(Q')\xrightarrow{\sim}\scrO_Y$. We say that a similarity $(\varphi, \lambda)$ between $Q$ and $Q'$ is \defi{$\scrO_Y$-oriented} if the diagram 
\begin{equation}
\begin{tikzcd}
	\scrZ(Q) && \scrZ(Q') \\
	& \scrO_Y
	\arrow["{\Clf^0(\varphi, \lambda)}", from=1-1, to=1-3]
	\arrow["\sim", "\iota"', from=1-1, to=2-2]
	\arrow["\iota'", "\sim"', from=1-3, to=2-2]
\end{tikzcd}
\end{equation}
commutes. 
\end{definition}

\begin{example} \label{exm:canonical}
Every $Q$ has a canonical $\scrZ(Q)$-orientation given by the identity.  
\end{example}

\begin{lem}
    The set of $\scrO_Y$-orientations, if nonempty, is a torsor under $\Aut_{X}(Y)$ by postcomposition.
\end{lem}

\begin{proof}
 Given an $\scrO_Y$-orientation $\iota\colon \scrZ(Q)\to \scrO_Y$, if $u\circ \iota=\iota$ for $u\in \Aut_{X}(Y)$, then $u=\id$, so the action is free.  Given another $\scrO_Y$-orientation $\iota'\colon \scrZ(Q)\to \scrO_Y$, then $u\circ \iota =\iota'$ with $u=\iota'\circ \iota^{-1} \in \Aut_{X}(Y)$, and thus the action is transitive.
\end{proof}

\begin{exm}
If $Y$ is an integral scheme (hence $X$ is also integral) and $Y \to X$ is generically separable, then $\Aut_{X}(Y) \simeq \Z/2\Z$ with the nontrivial element given by the standard involution. 
\end{exm}

\begin{exm}
Consider $X=\Spec F$ where $F$ is a field with $\opchar F \neq 2$ and let $(V,Q)$ be a quadratic space over $F$.  If $\Clf^0(Q)\simeq F(\sqrt{d})$, then an orientation of $Q$ is a choice between $\sqrt{d}$ and $-\sqrt{d}$, as in Galois theory.
\end{exm}



We now restrict to the binary case of rank $m=2$.  Then $\scrZ(Q)=\Clf^0(Q)$, as the even Clifford algebra is already commutative.  

\begin{proof}[Proof of \Cref{Main Thm GSO}]
We take $2$-fibers in \Cref{Main Thm GO}, applying \Cref{lem:fibres-of-equivalence}.  The $2$-fiber of $\catBQMod\to\catQuad$ over $\scrO_Y$ is precisely the category of $\scrO_Y$-oriented binary quadratic $\scrO_X$-modules under oriented similarities.  Likewise, the $2$-fiber of $\catBPSReg\to\catQuad$ over $\scrO_Y$ is the category of pseudoregular $\scrO_Y$-modules under $\scrO_Y$-module isomorphisms.
\end{proof}

\begin{remark}
We could also define the total category of oriented binary quadratic modules under oriented similarities, with objects $(Q,\scrA,\iota)$ with $\iota \colon \Clf^0(Q) \xrightarrow{\sim} \scrA$ under isomorphisms, allowing the target quadratic algebra of the orientation to vary.  But this construction does not produce a new stack: the canonical orientation (\Cref{exm:canonical}) is a quasi-inverse to the forgetful functor $(Q,\scrA,\iota) \mapsto Q$.  Indeed, every oriented object is canonically isomorphic to the canonically oriented object $(Q,\Clf^0(Q),\id)$ using $\iota$.  Thus the full subcategory of canonically oriented objects is essential.  The nontrivial operation is fixing the target quadratic algebra.
\end{remark}

\subsection{Frames, and canonical orientations}

We conclude by combining frames with orientations.  
Recall the even Clifford functor defines a map of stacks $\Qfr \to \catQuad$, with the universal object $Q=[a,b,c]$ over $\Spec \Z[a,b,c]$ mapping to $\Clf^0(Q)=R[e_1e_2]$ where $e_1e_2$ satisfies $x^2-bx+ac=0$.  Recall we also have a universal framed quadratic algebra $S \colonequals \Z[t,n][x]/\langle x^2-tx+n \rangle$ over $T\colonequals \Z[t,n]$ from \eqref{eqn:quadalguniv}, which we can view as an object in $\catQuad$.  

\begin{prop} \label{prop:univorient}
The $2$-fiber of $\Qfr \to \catQuad$ over $S$ is
represented by 
\[ R=\Z[t,n][a,b,c,u,u^{-1},r]/\langle u(t+2r)-b, u^2(n+tr+r^2)-ac \rangle. \]
The universal object over $R$ is the framed binary quadratic module $Q=[a,b,c]$ with the $S_R$-orientation 
\begin{equation}
\begin{aligned}
\iota \colon \Clf^0(Q) = R[e_1e_2] &\xrightarrow{\sim} S_R = R[x]/\langle x^2-tx+n \rangle \\
e_1e_2 &\mapsto u(\alpha+r)
\end{aligned}
\end{equation}
\end{prop}

\begin{proof}
A coordinate change on the universal quadratic algebra is specified as in \eqref
{eqn:coordchange}; the equations imply the match with the even Clifford algebra.
\end{proof}

\begin{remark}
If instead we work with the fiber of $\Qfr \to \catQuad^{\textup{fr}}$, then we get the tautological representation by $\Z[t,n][a,b,c]/\langle b-t,ac-n\rangle$ as the frames must match.  
\end{remark}

We have a canonical $\calO_X$-module isomorphism 
\begin{equation} \label{eqn:canonisom}
\begin{aligned}
\Clf^0(N_\scrI)/\calO_X \cong \tbigwedge^2 \scrI \otimes \scrN(\scrI)^\vee 
= \tbigwedge^2 \scrI \otimes (\tbigwedge^2 \scrI \otimes (\scrO_Y/\scrO_X)^\vee)^\vee 
\cong \scrO_Y/\scrO_X.
\end{aligned}
\end{equation}

\begin{prop} \label{prop:canorient}
Suppose that $\scrI$ is pseudoregular.  Then there exists a unique $\calO_X$-algebra isomorphism 
\begin{equation}  \label{eqn:canorient-defn}
\Clf^0(N_\scrI) \xrightarrow{\sim} \scrO_Y 
\end{equation}
which lifts the canonical $\calO_X$-module isomorphism \eqref{eqn:canonisom}.
\end{prop}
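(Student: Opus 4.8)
The plan is to reduce to the universal framed case, where pseudoregularity forces a specific equality of structure constants, and then to check that the canonical module isomorphism \eqref{eqn:canonisom} is compatible with multiplication.

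First I would work locally: since both $\Clf^0(N_\scrI)$ and $\scrO_Y$ are locally free $\scrO_X$-algebras of rank $2$ and the map \eqref{eqn:canorient-defn} is required to lift \eqref{eqn:canonisom}, uniqueness is immediate (an $\scrO_X$-algebra map out of a rank-$2$ algebra sending $1 \mapsto 1$ is determined by what it does on a complementary rank-$1$ summand, and that is exactly the data of \eqref{eqn:canonisom}); so the content is existence, i.e. that the unique $\scrO_X$-module isomorphism lifting \eqref{eqn:canonisom} is multiplicative. By the gluing-by-uniqueness principle used throughout Section~\ref{sec:norms}, it suffices to verify this over an affine open $U = \Spec R$ where $\scrO_Y(U) = S = R \oplus R\alpha$ is free with $\alpha^2 - t\alpha + n = 0$ and $\scrI(U) = I = Re_1 \oplus Re_2$ is free over $R$, pseudoregular over $S$. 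In fact, using \Cref{lem: universal quadratic algebra} and the fact that formation of $\Clf^0$ commutes with base change, I would further reduce to the universal good-framed object over $R = \Z[a,b,c]$, where $[\alpha]_\beta = \begin{pmatrix} a & b \\ c & 0 \end{pmatrix}$, $t = a$, $n = -bc$ (pseudoregularity is precisely what pins these relations down, by \Cref{lem:traceable}).

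Next I would compute both sides explicitly. From \eqref{eqn:e1e2}, the norm form is $N_I(x_1,x_2) = -cx_1^2 + ax_1x_2 + bx_2^2$ (after identifying the value line bundle with $R$ via \eqref{eqn:canonisom}), a binary quadratic module with associated matrix having off-diagonal $a$ and diagonals $-c, b$. By the recipe of \Cref{binary even cliff mod O_X} (see the Example following it), $\Clf^0(N_I) = R \oplus R\,(e_1 e_2)$ with $(e_1e_2)^2 - \Tr_{N_I}(e_1e_2)\,(e_1 e_2) + \Nm = 0$; reading off the structure constants of the even Clifford algebra of $N_I$ from that example with $(\textup{diag},\textup{offdiag}) = (-c, b; a)$ in place of $(a,c;b)$, one gets that the generator $e_1 e_2$ of $\Clf^0(N_I)/\scrO_X$ satisfies a quadratic relation with trace $a$ and norm $-bc$ — exactly the relation satisfied by $\alpha$. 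Since \eqref{eqn:canonisom} sends (the class of) $e_1 e_2$ to (a unit multiple of, then normalize) the generator $\alpha$ of $\scrO_Y/\scrO_X$, the module isomorphism lifting it respects the quadratic relations on both sides, hence is an $\scrO_X$-algebra isomorphism. Finally I would descend: by \Cref{lem: universal quadratic algebra} an arbitrary pseudoregular $\scrI$ over $\scrO_Y$ is a specialization of the universal object under a unique ring homomorphism, the isomorphism constructed universally specializes, and by uniqueness the local isomorphisms glue to the desired global one.

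The main obstacle I anticipate is bookkeeping rather than conceptual: matching up the two competing conventions for a ``good basis'' (the matrix $\begin{pmatrix} a & b \\ c & 0\end{pmatrix}$ appearing in \Cref{lem: universal quadratic algebra} versus the matrix $\begin{pmatrix} b & -a \\ c & 0\end{pmatrix}$ for $[e_1e_2]$ in the Example after \Cref{binary even cliff mod O_X}), keeping the twist by $(\scrO_Y/\scrO_X)^\vee$ in the definition of $\scrN(\scrI)$ straight through the chain of canonical isomorphisms in \eqref{eqn:canonisom}, and confirming that the dualization does not introduce a sign that would obstruct multiplicativity. Once the dictionary between these conventions is fixed, the verification that trace and norm on both sides agree is the single computation that does all the work, and it is essentially \Cref{lem:traceable} read in reverse.
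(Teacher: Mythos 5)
Your proposal follows essentially the same route as the paper's proof: reduce to a (good) framed local situation, read off from \eqref{eqn:e1e2} that $\Clf^0(N_I)=R\oplus R\,e_1e_2$ with $e_1e_2$ of trace $a$ and norm $-bc$, and use pseudoregularity (\Cref{lem:traceable}) to see that the generator $\gamma$ of $S$ has the same trace and norm, so the normalized module isomorphism $e_1e_2\mapsto\gamma$ is multiplicative; the paper records the same computation via $[e_1e_2]_\beta=\begin{pmatrix} a & b\\ c & 0\end{pmatrix}$, working directly in a good local basis rather than detouring through the universal object, which is a cosmetic difference.

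One caveat: your parenthetical uniqueness argument is not right as stated. The canonical isomorphism \eqref{eqn:canonisom} lives on the quotient $\Clf^0(N_\scrI)/\scrO_X$, not on a chosen complementary rank-$1$ summand, so ``lifting it'' determines the image of $e_1e_2$ only modulo $\scrO_X$: a priori a lift could send $e_1e_2\mapsto\gamma+r$ with $r\in\scrO_X(U)$. What rules this out is multiplicativity --- the image must satisfy the same quadratic relation $x^2-ax-bc=0$ as $e_1e_2$ --- which is exactly the trace/norm computation you already carry out for existence (and is the implicit content of the paper's ``it is enough to match characteristic polynomials''). With that repair, your argument is the paper's.
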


We call the isomorphism \eqref{eqn:canorient-defn} the \defi{canonical orientation}.

\begin{proof}
We will exhibit a unique such isomorphism locally. On an open set $U=\Spec{R}$, we have $S=\scrO_Y(U)$ and $I=\scrI(U)$ with a good frame $\alpha, e_1, e_2$. Let $N_I \colon I \to L$ where $L= \tbigwedge^2 I \otimes (S/R)^\vee$.  Then $\Clf^0(N_I) \simeq R \oplus Re_1e_2$. In the canonical isomorphism \eqref{eqn:canonisom}, locally we have $\Clf^0(N_I)/R \simeq R(e_1e_2) \xrightarrow{\sim} S/R = R \alpha$ where the image of $e_1e_2$ is $u\alpha$ with $u \in R^\times$ and we rescale $e_1$ so that $u=1$.  

The map $e_1e_2 \mapsto \alpha$ in fact defines an $R$-algebra isomorphism. It is enough to show that $e_1e_2$ and $\alpha$ have the same characteristic polynomial.  From the above, using that $I$ is pseudoregular, we have $\Tr_I(\alpha)=\Tr_S(\alpha)=b$ and $\Nm_I(\alpha)=\Nm_S(\alpha)=ac$.  We similarly compute using \eqref{eqn:e1e2}
that $(e_1e_2)e_1=e_1(b-e_1e_2)=be_1-ae_2$ and $(e_1e_2)e_2 =ce_1$ giving $[e_1e_2]_{(e_1, e_2)}=\begin{pmatrix} b & c\\ -a & 0\end{pmatrix}$ which indeed matches. It is clear that the isomorphism is independent of the choice of good frame.
\end{proof}

\begin{remark}
\Cref{prop:canorient} allows us to describe explicitly the natural isomorphisms showing that the Clifford and norm functors are quasi-inverse, giving another proof of \Cref{Main Thm GO-inpaper}. Let $\mathcal{F}$ and $\mathcal{G}$ denote the Clifford and norm functors respectively. 

Let $Q \colon \scrM \to \scrL$ be a binary quadratic $\scrO_X$-module. Then $(\mathcal{G} \circ \mathcal{F})(Q)=N_{\Clf^1(Q)}$. There is an $\scrO_X$-module isomorphism $\lambda_Q$ from $\scrN(\Clf^1(Q))=\scrN(\scrM)=\bigwedge^2\scrM \otimes (\Clf^0(Q)/\scrO_X)^\vee\simeq \bigwedge^2\scrM \otimes (\bigwedge^2 \scrM \otimes \scrL^\vee)^\vee$ to $\scrL$, so $(\id, \lambda_Q)$ is a similarity  between $N_{\Clf^1(Q)}$ and $Q$.  Let the pair of $\scrO_X$-module isomorphisms $\varphi \colon \scrM \to \scrM'$ and $\lambda\colon \scrL \to \scrL'$ be a similarity between $Q$ and another binary quadratic $\scrO_X$-module $Q' \colon \scrM' \to \scrL'$. By the canonical identification in \Cref{binary even cliff mod O_X}, the diagram 
\begin{equation}
\begin{tikzcd}
 	N_{\Clf^1(Q)} &&& Q\\
     \\
 	N_{\Clf^1(Q')} &&& Q'
 	\arrow["{(\id, \lambda_Q)}", from=1-1, to=1-4]
 	\arrow["{(\mathcal{G}\circ\mathcal{F})(\varphi, \lambda)}"', from=1-1, to=3-1]
 	\arrow["{(\varphi, \lambda)}", from=1-4, to=3-4]
 	\arrow["{(\id, \lambda_{Q'})}", from=3-1, to=3-4]
 \end{tikzcd}
 \end{equation}
 commutes, so $\mathcal{G} \circ \mathcal{F}\simeq\id_{\catBQMod}$.

Conversely, for a pseudoregular $\scrO_Y$-module $\scrI$, under the identification $\iota\colon\Clf^0(N_\scrI)\xrightarrow{\sim}\scrO_Y$ in \Cref{prop:canorient}, we have $(\mathcal{F}\circ\mathcal{G})(\scrI)=\Clf^1(N_\scrI)=\scrI$. Given another pseudoregular $\scrO_{Y'}$-module $\scrI'$, let $(\sigma, \psi) \colon (\scrO_Y, \scrI) \to (\scrO_{Y'}, \scrI')$ be a semilinear isomorphism. Let $\iota'\colon\Clf^0(N_{\scrI'})\xrightarrow{\sim}\scrO_Y$ be the canonical orientation. By \eqref{eqn:canonisom}, the diagram \begin{equation}
\begin{tikzcd}
 	(\Clf^0(N_\scrI), \Clf^1(N_\scrI)) &&& (\scrO_Y, \scrI)\\
     \\
 	(\Clf^0(N_{\scrI'}), \Clf^1(N_{\scrI'})) &&& (\scrO_{Y'}, \scrI')
 	\arrow["{(\iota, \id)}", from=1-1, to=1-4]
 	\arrow["{(\mathcal{F}\circ \mathcal{G})(\sigma, \psi)=(\Clf^0(\psi, \wedge^2\psi\otimes\sigma^\vee), \psi)}"', from=1-1, to=3-1]
 	\arrow["{(\sigma, \psi)}", from=1-4, to=3-4]
 	\arrow["{(\iota', \id)}", from=3-1, to=3-4]
 \end{tikzcd}
 \end{equation}
 commutes, so $\mathcal{F}\circ \mathcal{G}\simeq\id_{\catBPSReg}$.
\end{remark}

\subsection{Primitivity and invertibility}

In this section, we restrict our equivalences to nondegenerate objects to see the resulting restricted equivalences; as applications, taking isomorphism classes of objects, we derive relationships to Picard groups.  

\begin{definition}
    We say a quadratic $\scrO_X$-module $(\scrM, \scrL, Q)$ is \defi{primitive} if for each open $U\subseteq X$, the sections $Q(x)$ with $x\in \scrM(U)$ generate $\scrL(U)$ as an $\scrO_X(U)$-module. 
\end{definition}

\begin{prop}\label{primitive = invertible}
A quadratic $\calO_X$-module is primitive if and only if it corresponds under Clifford to an invertible $\calO_Y$-module.
\end{prop}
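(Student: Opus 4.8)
The plan is to prove this by reducing to a local computation in a good frame, where both sides become the single statement ``some value of $Q$ is a unit.''

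\smallskip

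First I would record the (easy) reductions. Primitivity is local on $X$, and being invertible (i.e.\ locally free of rank $1$) as an $\scrO_Y$-module is local on $Y$; since every prime of $\scrO_Y$ lies over a prime of $\scrO_X$ contained in a maximal ideal, it is enough to treat $X=\Spec R$ with $R$ a local ring. Then, choosing a good frame as in \Cref{lem: universal quadratic algebra} (cf.\ the good-frame computation for $\Clf^0,\Clf^1$ in \Cref{sec:pseudo}), we may write $Q(x,y)=ax^2+bxy+cy^2$ and identify $I\colonequals\Clf^1(Q)=Re_1\oplus Re_2$ as a module over $S\colonequals\Clf^0(Q)=R[\gamma]/(\gamma^2-b\gamma+ac)$, where $\gamma\colonequals e_1e_2$ acts by $\gamma e_1=be_1-ae_2$ and $\gamma e_2=ce_1$. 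In this model $Q$ is primitive exactly when the values $Q(\alpha,\beta)=a\alpha^2+b\alpha\beta+c\beta^2$ generate $R$, i.e.\ (as $R$ is local) exactly when some $Q(\alpha,\beta)\in R^\times$.

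\smallskip

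The computational heart is: for $\xi=\alpha e_1+\beta e_2\in I$ one has $S\xi=R\xi+R\gamma\xi$, and in the basis $e_1,e_2$ the matrix with columns $\xi$ and $\gamma\xi=(\alpha b+\beta c)e_1-\alpha a\,e_2$ has determinant $-Q(\alpha,\beta)$; since a surjective endomorphism of a finite free module is an isomorphism, $S\xi=I$ if and only if $Q(\alpha,\beta)\in R^\times$. For ``primitive $\Rightarrow$ invertible'': choose $\alpha,\beta$ with $Q(\alpha,\beta)$ a unit, so $I=S\xi$ is cyclic over $S$; the $S$-linear surjection $S\to I$, $s\mapsto s\xi$, has kernel an ideal which, because $I$ and $S$ are both $R$-free of rank $2$, is an $R$-direct summand of $S$ of rank $0$, hence zero. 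Thus $I\cong S$ is free of rank $1$ over $S$, in particular invertible. For the converse: over the local ring $R$ the finite $R$-algebra $S$ is semilocal, so the invertible (hence finitely generated projective of constant rank $1$) $S$-module $I$ is free of rank $1$; writing $I=S\xi$ with $\xi=\alpha e_1+\beta e_2$, the determinant identity forces $Q(\alpha,\beta)\in R^\times$, so $Q$ is primitive.

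\smallskip

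I expect the only real friction to be bookkeeping: making the reduction to $R$ local precise on the $\scrO_Y$-side (tracking which primes of $\scrO_Y$ survive after localizing $\scrO_X$), fixing the sign conventions in the matrix of $\gamma$ against the good-frame example, and invoking cleanly the standard fact that a finitely generated projective module of constant rank over a commutative semilocal ring is free. One can sidestep the last fact in the converse by reducing modulo the maximal ideal $\mathfrak{m}$: in each of the possible cases for the Artinian algebra $S/\mathfrak{m}S$ (namely $k\times k$, $k[\epsilon]$, or a quadratic field) one checks that $I/\mathfrak{m}I$ is free of rank $1$, whence $I$ is cyclic over $S$ by Nakayama and then free by the rank count above. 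None of this is deep, so once the local model is in place the proof is short.
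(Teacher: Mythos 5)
Your proposal is correct and follows essentially the same route as the paper: a local good-frame computation identifying $S\xi = I$ (for $\xi = \alpha e_1 + \beta e_2$) with the condition $Q(\alpha,\beta)\in R^\times$ via the determinant of the multiplication-by-$\xi$ map $S \to I$. If anything, your reduction to a local ring (and the semilocal/Nakayama argument in the converse) is more careful than the paper's write-up, which asserts a unit value of $Q$ over an arbitrary affine open, a step that strictly requires first localizing as you do.
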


\begin{proof}
The conditions are local on the base, so we reduce to the free case over $R$ and choose a frame.  We then refer to \Cref{exm:a3bdiso} and \Cref{exm:goodcliff}, with $Q \leftrightarrow I$.  

We may further localize so that $R$ is a local ring with maximal ideal $\mathfrak{m}$.  Suppose $Q=[a,b,c]$ is primitive; then at least one of $a,b,c \in R^\times$.  If $a \in R^\times$ then we check that $I=Se_1$; if $c \in R^\times$ then $I=Se_2$.  If $b \in R^\times$ and $a,c \in \mathfrak{m}$ then $I=S(e_1+e_2)$.  In any case, $I$ is free of rank $1$.  

Conversely, if $I=S e \simeq S$ then since the norm furnishes an inverse to Clifford, we see that $Q$ is similar to $\Nm_S$ which represents $1$, hence $Q$ is primitive.  
\end{proof}

Now we prove the corollaries with respect to the isomorphism classes. 

\begin{proof}[Proof of \Cref{GSO class =Pic Y}]
Immediate from taking isomorphism classes on both sides of \Cref{Main Thm GSO} by \Cref{primitive = invertible}.
\end{proof}

\begin{exm} \label{exm:classical}
If $X=\Spec \Z$, then $Y=\Spec S$ is a quadratic ring with discriminant $d \in \Z$.  Then \Cref{GSO class =Pic Y} says that primitive binary quadratic forms of discriminant $d$ up to $\SL_2(\Z)$-equivalence and scaling by $\pm1$ are in bijection with the class group of $S$.  The orientation corresponds to a fixed choice of generator of $S/\Z$.
\end{exm}




\section{Universal composition law}\label{sec: universal composition law}
In the previous section, we saw that the group law on the Picard group induces a composition law on classes of invertible modules.  In this section, we extend this composition law to a wide class of pairs of pseudoregular modules and express this directly on the level of quadratic modules, in particular recovering classical composition laws. 

\subsection{Coprimitivity}

We first work locally. Let $R$ be a commutative ring (with 1), let $S$ be a free quadratic $R$-algebra, and let $I$ and $I'$ be pseudoregular $S$-modules, free over $R$. We can of course form the tensor product $I \otimes_S I'$, but the result need not be pseudoregular.

\begin{exm}
Returning to \Cref{exm:apeislon}, let $S=R[\varepsilon]$ with $\varepsilon^2=0$ and $I=R^2$ having $\varepsilon$ acting by $0$.  Then $I \otimes_S I=I \otimes_R I$ is an $R$-module of rank $4$---the condition of $S$-linearity is automatic.
\end{exm}

Let $\alpha, e_1, e_2$ and $\alpha', e'_1, e'_2$ be good frames for $S, I$ and $S, I'$, respectively.  Then $\alpha'=u\alpha+r$ for some $r \in R$ and $u \in R^\times$.  We say that the pair of good frames is \defi{unital} if $u=1$; replacing $\alpha \leftarrow u\alpha$, we may suppose that we have a unital pair of good frames.  By \Cref{cor: universal quadratic algebra}, we have $\alpha^2-b\alpha+ac={\alpha'}^2-b'\alpha'+a'c'=0$ for some $a,b,c,a',b',c'\in R$.  It follows that 
\begin{equation} \label{eqn:blerg}
b'=b+2r \quad \textup{and} \quad a'c'=ac+br+r^2.
\end{equation}

\begin{definition}
We say that $I,I'$ are \defi{coprimitive} if the elements $a,b,c,a',b',c',r$ generate the unit ideal.  
\end{definition}

\begin{lem} \label{lem:coprim}
The coprimitive condition on $I$ and $I'$ is independent of the choice of unital pair of good frames.
\end{lem}

\begin{proof}
Let $\hat{\alpha}, \hat{e_1}, \hat{e_2}$ and $\hat{\alpha'}, \hat{e_1'}, \hat{e_2'}$ be another unital pair of good frames for $S, I$ and $S, I'$, giving an ideal $\langle \hat{a},\hat{b},\hat{c},\hat{a'},\hat{b'},\hat{c'},\hat{r} \rangle$.  We show that this ideal is equal to $\langle a,b,c,a',b',c',r\rangle$.  

First, by \Cref{good frame independence}, we have $\langle \hat{a},\hat{b},\hat{c}\rangle = \langle a,b,c \rangle$ and similarly with primes.  For the remaining element, we have $\hat{\alpha}=\alpha+s$ and $\hat{\alpha'}=\alpha'+s'$ for $s, s'\in R$. By \eqref{eqn:solveforr}, we have $s\in \langle a, b,c\rangle$ and $s'\in \langle a', b', c'\rangle$, so $\hat{r}=\hat{\alpha'}-\hat{\alpha}=\alpha'-\alpha+s'-s=r+s'-s\in \langle a,b,c, a', b', c', r\rangle$. 
\end{proof}

\begin{remark}
Rescaling $\alpha$ by a unit $u\in R^\times$ to obtain a unital pair does not change the ideal: $$\langle a,b,c, a', b', c', r\rangle=\langle ua, ub, uc, a', b', c', r\rangle.$$  So \Cref{lem:coprim} could be extended to any pair of good frames.  
\end{remark}

We now define the global notion. Let $X$ be a scheme, let $\scrO_Y$ be a quadratic $\scrO_X$-algebra, and let $\scrI, \scrI'$ be pseudoregular $\scrO_Y$-modules which are locally of rank 2 as $\scrO_X$-modules.
\begin{definition}
    We say $\scrI$ and $\scrI'$ are \defi{coprimitive} if there exists an affine open cover $X=\bigcup_i U_i$ such that $\scrI(U_i)$ and $\scrI'(U_i)$ are coprimitive over $\scrO_Y(U_i)$ for all $i$.
\end{definition}

It is straightforward to check that $\scrI$ and $\scrI'$ are coprimitive if and only if for all affine open $U\subseteq X$, the modules $\scrI(U)$ and $\scrI'(U)$ are coprimitive over $\scrO_Y(U)$.
\begin{lem}\label{lem: Fitting ideal}
Let $M$ be an $R$-module presented by $$R^m\xrightarrow{A}R^n\to M\to 0$$ with $A \in \operatorname{Mat}_{m,n}(R)$.  For $k\geq 0$, let $I_k(A)$ denote the ideal of $R$ generated by the $k\times k$-minors of $A$ with $I_0(A)\coloneq R$. Then $M$ is locally free of rank $r$ if and only if $I_{n-r}(A)=R$ and $I_{n-r+1}(A)=\{0\}$.
\end{lem}

\begin{proof}
    See Eisenbud \cite{Eis2013}*{Proposition 20.8}.
\end{proof}

\subsection{Tensor product and composition} \label{sec:tensorcomp}

In this section, we show that coprimitivity characterizes framed composition laws and compute the universal law on an open cover, including Dirichlet composition.

\begin{prop}\label{prop: tensor prod of coprimitive}
The tensor product $\scrI\otimes_{\scrO_Y} \scrI'$ is locally free of rank 2 as an $\scrO_X$-module if and only if $\scrI$ and $\scrI'$ are coprimitive.
\end{prop}
\begin{proof}
Being locally free as an $\scrO_X$-module is local on $X$ and may be checked on affine opens by \Cref{lem: Fitting ideal}. On the other hand, the coprimitivity condition is also local and is independent of the choice of unital good frames by \Cref{lem:coprim}. Therefore, it suffices to prove the claim after replacing $X$ by an affine open on which all relevant modules are free, and good frames are chosen. 

Let $X=\Spec{R}$, let $S=R\oplus R\alpha$ be a free quadratic $R$-algebra, and let $I, I'$ be pseudoregular $S$-modules which  are free of rank 2 as $R$-modules. Choose a unital pair of good frames $\alpha, e_1, e_2$ and $\alpha', e'_1, e'_2$ so that $\alpha'=\alpha+r$.

Now we work over the universal base ring $$R_{\univ}\colonequals\Z[a,b,c,a',b',c', r]/\langle b'-b-2r, a'c'-ac-br-r^2\rangle,$$ and every local situation arises as a unique specialization of $R_{\univ}$. We consider the tensor product $I\otimes_S I'$ with $$(s\cdot x)\otimes y=x\otimes (s\cdot y)$$ for all $s\in S, x\in I$, and $y\in I'$. The relations $$(\alpha e_i)\otimes e'_j + re_i\otimes e'_j= e_i\otimes (\alpha'e'_j)$$ for each $i, j= 1$ or $2$ give the columns of $$A\coloneqq\begin{pmatrix}
    -r & -c' & c & 0\\
    a' & b+r & 0 & c\\
    -a & 0 & -(b+r) & -c'\\
    0 & -a & a' & r
\end{pmatrix}$$
in the basis $e_1 \otimes e_1'$, $e_1 \otimes e_2'$, $e_2 \otimes e_1'$, and $e_2 \otimes e_2'$, such that $I\otimes_S I'$ is isomorphic to the quotient of $ R^4$ by the column $R$-span of $A$. Let $I_2(A)$ and $I_3(A)$ be the ideals generated by the $2\times 2$ and $3 \times 3$ minors of $A$ respectively. We compute $I_3(A)=\{0\}$ and 
\begin{equation}
\begin{aligned}
I_2(A) &=\langle a^2, aa', ab+ar, ac-a'c',ac+br+r^2,ac',ar,(a')^2,a'b+a'r,a'c,\\
&\qquad\qquad a'c'-br-r^2,a'r,b^2+2br+r^2,bc+cr,bc'+c'r,c^2,cc',cr,(c')^2,c'r,r^2 \rangle 
\end{aligned}
\end{equation}
We then observe that $\sqrt{I_2(A)}=\langle a,b,c,a',b',c',r\rangle$.  Recall that $I_2(A)=R$ if and only if $\sqrt{I_2(A)}=R$.  Thus taking $N=I\otimes_S I'$ in \Cref{lem: Fitting ideal}, we conclude that $I\otimes_S I'$ is locally free of rank 2 if and only if $\langle a,b,c,a',b',c',r\rangle=R$.
\end{proof}

\begin{thm}\label{tensor product is pseudoregular}
If $\scrI$ and $\scrI'$ are coprimitive, then $\scrI\otimes_{\scrO_Y} \scrI'$ is pseudoregular.
\end{thm}
\begin{proof}
By \Cref{prop: tensor prod of coprimitive}, $\scrI\otimes_{\scrO_Y}\scrI'$ is locally free of rank 2 as an $\scrO_X$-module. Since pseudoregularity and coprimitivity are local, it suffices to check on an affine open cover. Let $X=\Spec{R}$, let $S=R\oplus R\alpha$ be a free quadratic $R$-algebra, and let $I, I'$ be pseudoregular $S$-modules which are free of rank 2 as $R$-modules. Choose a unital pair of good frames $\alpha, e_1, e_2$ and $\alpha', e'_1, e'_2$ so that $\alpha'=\alpha+r$. Since $I$ and $I'$ are coprimitive, $\langle a, b, c, a', b', c', r\rangle=R$ by \Cref{prop: tensor prod of coprimitive}. Thus, the principal opens $$D(a), D(a'), D(c), D(c'), D(r), D(b+r)$$ 
cover $\Spec{R}$ where $D(J) = \Spec R \smallsetminus \Spec J$ for an ideal $J \subseteq R$. It suffices to verify the claim on these opens. Along the way, we also compute the norm on $I\otimes_SI'$, giving an explicit composition.
\begin{enumerate}
    \item On $D(a)$, we have $I\otimes_S I'=Se_1\otimes_S I'\simeq I'$ which is pseudoregular.  The good frame $\alpha', e_1\otimes e'_1, e_1\otimes e'_2$ yields the composition $$Q''= Q'=[ a',b',c'].$$
    \item On $D(a')$, we have $I\otimes_S I'=I\otimes_S Se'_1\simeq I$ which is pseudoregular.  The good frame $\alpha, e_1\otimes e'_1, e_2\otimes e'_1$ yields the composition $$Q''=Q=[ a,b,c].$$
    \item On $D(c)$, we have $I\otimes_S I'=Se_2\otimes_S I'\simeq I'$ which is pseudoregular.   The good frame $\alpha',  e_2\otimes e'_1, e_2\otimes e'_2$ yields the same composition as case (1).
    \item On $D(c')$, we have $I\otimes_S I'=I\otimes_S Se'_2\simeq I$ which is pseudoregular.  The good frame $\alpha, e_1\otimes e'_2, e_2\otimes e'_2$ yields the same composition as case (2). 
    \item On $D(r)$, $I\otimes_S I'$ has an $R$-basis $f_1=e_1\otimes e'_2, f_2= e_2\otimes e'_1$. We have 
    \begin{equation}
    \alpha f_1=bf_1-ae_2\otimes e'_2=bf_1+ar^{-1}(cf_1-c'f_2)=(b+acr^{-1})f_1-ac'r^{-1}f_2
    \end{equation}
    and 
    \begin{equation} 
    \alpha f_2=ce_1\otimes e'_1=ca'r^{-1}f_1-car^{-1}f_2.
    \end{equation}
    Then $\Tr_S(\alpha)=\Tr_{I\otimes_SI'}(\alpha)=b$, and thus $I\otimes_S I'$ is pseudoregular.  The good frame is $r\alpha+ac, e_1\otimes e'_2, e_2\otimes e'_1$  yields the composition $$Q''=[ ac', br+2ac, ca'].$$
    \item On $D(b+r)$, $I\otimes_S I'$ has an $R$-basis $f_1=e_1\otimes e'_1, f_2=e_2\otimes e'_2$. Indeed, we have 
    \begin{equation}
    \begin{aligned}
    \alpha f_1 &=bf_1-ae_2\otimes e'_1=bf_1-a(c(b+r)^{-1}f_1+a'(b+r)^{-1}f_2)\\
    &=(b-ac(b+r)^{-1})f_1-aa'(b+r)^{-1}f_2
    \end{aligned}
    \end{equation}
    and 
    \begin{equation}
    \alpha f_2=ce_1\otimes e'_2=cc'(b+r)^{-1}f_1+ca(b+r)^{-1}f_2.
    \end{equation}
    Then $\Tr_S(\alpha)=\Tr_{I\otimes_S I'}(\alpha)=b$, and thus $I\otimes_S I'$ is pseudoregular. The good frame is $\alpha(b+r)-ac, e_1\otimes e'_1, e_2\otimes e'_2$ yields the composition $$Q''=[aa', b(b+r)-2ac, cc'].$$ 
\end{enumerate}
These opens together form a cover, completing the proof.
\end{proof}

Amazingly, we can recover Dirichlet composition of united forms in the local case (see a brief review in \cref{recentwork}) in a universal way as follows.  

\begin{cor} \label{cor:dirichletcomp}
Suppose that $\langle a,a',b+r \rangle = R$.  Let $s,s',t\in R$ be such that  
\begin{equation} \label{eqn:sstsii}
s a+s' a'+t (b+r)=1.
\end{equation}

Then $f_1=e_1\otimes e'_1$ and $f_2=s e_1\otimes e'_2+s' e_2\otimes e'_1 + t e_2\otimes e'_2$ form an $R$-basis of $I\otimes_SI'$, and $\alpha+a(rs-ct), f_1, f_2$ is a good frame, yielding \begin{equation}
Q'' \simeq [aa', b-2a(ct-rs), cs'+(s b+ct)(c't+rs')+as(cs'+c's)].
\end{equation}
\end{cor}

\begin{proof}
Using \eqref{eqn:sstsii}, we compute the relations
\begin{align*}
(c't+rs')f_1+af_2 &=e_1 \otimes e'_2\\
(ct-rs)f_1+a'f_2&=e_2\otimes e'_1\\
-(cs'+c's)f_1+(b+r)f_2 &=e_2\otimes e'_2,
\end{align*}
then $f_1, f_2$ is an $R$-basis. Also, the relations \begin{align*}
    \alpha f_1&=(b-a(ct-rs))f_1-aa'f_2\\
    \alpha f_2&= (cs'+(sb+ct)(c't+rs')+as(cs'+c's))f_1+a(ct-rs)f_2
\end{align*} imply that $1,\alpha+a(rs-ct)$ and  $f_1, f_2$  is a good frame, which yields $Q''$.
\end{proof}

\begin{thm} \label{thm:composition}
    The set of isomorphism classes of pseudoregular $\scrO_Y$-modules carries a functorial, commutative partial monoid structure 
    $$[\scrI]\ast  [\scrI']=[\scrI\otimes_{\scrO_Y}\scrI']$$ defined whenever $\scrI$ and $\scrI'$ are coprimitive. 
\end{thm}

\begin{proof}
Well-definedness on isomorphism classes and functoriality are immediate. 
Closure under the tensor product follows from \Cref{tensor product is pseudoregular}. 
The identity element is $[\scrO_Y]$: indeed, $\scrO_Y\otimes_{\scrO_Y} \scrI\simeq \scrI$.
Commutativity, where defined, follows from $\scrI\otimes_{\scrO_Y} \scrI'\to \scrI'\otimes_{\scrO_Y} \scrI$.  Similarly, associativity holds whenever both sides are defined:  
$(\scrI\otimes \scrI')\otimes_{\scrO_Y} \scrI''\to I\otimes_{\scrO_Y}(I'\otimes_{\scrO_Y} \scrI'')$ is an $\scrO_Y$-module isomorphism, so we have $$([\scrI]\ast  [\scrI'])\ast [\scrI'']=[\scrI]\ast ([\scrI']\ast  [\scrI''])$$
as desired.
\end{proof}

\begin{definition}
We say two $\scrO_Y$-oriented binary quadratic $\scrO_X$-modules $Q,Q'$ are \defi{coprimitive} if the associated $\scrO_Y$-modules $\scrI=\Clf^1(Q)$ and $\scrI'=\Clf^1(Q')$ are coprimitive.  
\end{definition}

We unravel this locally. Let $S$ be a free quadratic $R$-algebra. Let $Q=[a,b, c]$ and $Q'=[a',b',c']$ be free, $S$-oriented, binary quadratic $R$-modules. The two $S$-orientations $\Clf^0(Q)\xrightarrow{\sim}S$ and $\Clf^0(Q')\xrightarrow{\sim}S$  determine an element $r \in R$ such that $b'=b+2r$ and $a'c'=ac+br+r^2$ as in \eqref{eqn:blerg}, and coprimitive is precisely the condition $\langle a,b,c,a',b',c',r\rangle = R$.  

\begin{proof}[Proof of \Cref{thm: composition law}]
Via the Clifford and norm functors, which induce an equivalence \Cref{Main Thm GSO} and therefore a bijection on isomorphism classes of objects, the partial composition law of \Cref{thm:composition} defined on isomorphism classes of coprimitive pseudoregular $\scrO_Y$-modules induces one on isomorphism classes of coprimitive $\scrO_Y$-oriented binary quadratic modules.
\end{proof}

\begin{remark}
Interestingly, our universal composition law works over a nontrivial open cover!  In particular, we have not shown that over a general commutative ring $R$, when $Q$ and $Q'$ are free binary quadratic modules over $R$, the composition is again free over $R$.  We wonder if in general there may indeed be a $K$-theoretic obstruction.
\end{remark}

\section{Rigidifications} \label{sec:rigid}

In this section, we pursue rigidifications (and their oriented versions), which then relate to quadratic modules up to isometry instead of similarity.

\subsection{Rigidifications} \label{sec:rigiddet}

The orientation defined in \cref{sec:orient} is quite different from that given by Wood \cite{Wood}, O'Dorney \cite{O'Dorney}, and Dallaporta \cite{Dallaporta}, as we now explain and put to a different purpose.  

We begin with the following motivating statement.  We recall (\cref{sec:framed-atlases}) that taking the value bundle gives a morphism $\catBQMod \to \catPic$.  We also recall the functor $\scrN \colon \catBPSReg \to \catPic$ \eqref{eqn:Nipic}.  

\begin{prop} \label{Main Thm sim-inpaper}
The equivalence $\catBQMod \xrightarrow{\sim} \catBPSReg$ in \eqref{catbqmodbpsreg}
factors over the stack $\catQuad \times \catPic$.  
\end{prop}

\begin{proof}
Given $Q \colon \scrM \to \scrL$ we get $\scrL$.  Clifford gives $\scrI=\scrM$ and $\scrO_Y=\Clf^0(Q)$; by the canonical isomorphism in \Cref{binary even cliff mod O_X},
\begin{equation} 
\scrN(\scrI) = \tbigwedge^2 \scrI \otimes (\scrO_Y/\scrO_X)^\vee \cong \tbigwedge^2 \scrM \otimes (\tbigwedge^2 \scrM \otimes \scrL^\vee)^\vee \cong \scrL
\end{equation}
so indeed this factors up to canonical isomorphism.  

Similarly, given $\scrI$ we get $\scrN(\scrI)$.  Applying the norm gives the quadratic module $N_\scrI \colon \scrI \to \scrN(\scrI)$ with value bundle exactly $\scrN(\scrI)$.  
\end{proof}

We may now look at $2$-fibers.  Let $\scrO_Y$ be a quadratic $\scrO_X$-algebra, let $\scrI$ be a rank-balanced $\scrO_Y$-module, and let $\scrL$ be an invertible $\scrO_X$-module.

\begin{definition}
An $\scrL$-\defi{rigidification} of $\scrI$ is an $\scrO_X$-module isomorphism $r\colon \scrN(\scrI)\to \scrL$. 
\end{definition}


Given an $\scrO_Y$-module isomorphism $\varphi\colon \scrI\to\scrI'$ where $\scrI, \scrI'$ are equipped with $\scrL$-rigidifications $r$ and $r'$ respectively, we say that $\varphi$ is \defi{rigidified} if the induced isomorphism $(\wedge^2\varphi)\otimes \id\colon \scrN(\scrI)\to \scrN(\scrI')$ commutes with the rigidifications, i.e., the diagram 
\begin{equation}
\begin{tikzcd}[column sep=large]
	\scrN(\scrI) & \scrN(\scrI') \\
	\scrL & \scrL
	\arrow["{(\wedge^2\varphi)\otimes \id}", from=1-1, to=1-2]
	\arrow["\wr", "r"', from=1-1, to=2-1]
	\arrow["r'", "\wr"', from=1-2, to=2-2]
    \arrow[equal, from=2-1, to=2-2]
\end{tikzcd}
\end{equation}
commutes. 

\begin{remark} \label{rmk:orientugh}
An $\scrL$-rigidification of $\scrI^\vee$ agrees with the $\scrL$-type orientation defined by Wood \cite{Wood}*{section 5}; over a Dedekind domain (where we consider a fractional ideal $\mathfrak{a}$ instead of $\scrL$), this also agrees with the type $\mathfrak{a}$ orientation defined by O'Dorney \cite{O'Dorney}*{section 2.2}. Dallaporta \cite{Dallaporta}*{section 3.1} defines an orientation of $\scrO_Y$ to be an $\scrO_X$-module isomorphism $\scrO_Y/\scrO_X\xrightarrow{\sim} \scrL^\vee$ which recovers the orientation in \cite{Wood}*{Theorem 5.2} by setting $\scrL=\scrO^\vee_X$. The choice of dual is to deal with discriminant and parities.  
\end{remark}

\begin{lem}\label{lem: rigidification is torsor}
    When nonempty, the set of $\scrL$-rigidifications of $\scrI$ is a torsor under $\Gm(X) \colonequals \scrO_X(X)^\times$.
\end{lem}

\begin{proof}
    The action of $u\in \Gm(X)$ on an $\scrL$-rigidification $r\colon \scrN(\scrI)\to \scrL$ is induced by the left multiplication $m_u$ by $u$ on $\scrL$, i.e., $u\cdot r=m_u\circ r$. Then $m_u\circ r=r$ forces $u=1$, and thus this implies freeness. Given another $\scrL$-rigidification $r'\colon \scrN(\scrI)\to \scrL$, let $v=r'\circ r^{-1} \in \Aut_{\scrO_X}(\scrL)\simeq \Gm(X)$ (because $\scrL$ is a line bundle). Then $v=m_t$ for a unique $t\in \Gm(X)$ and $r'=
    m_t\circ r$. This implies transitivity. 
\end{proof}



Given an $\scrL$-rigidification $r\colon \scrN(\scrI)\to \scrL$, we define the \defi{modified norm map} $N_{\scrI, \scrL}$ to be the composition $N_{\scrI, \scrL}=r\circ N_\scrI$.

\begin{proof}[Proof of \Cref{Main Thm SO}]
Combine \Cref{Main Thm sim-inpaper} with the factorization over $\catQuad$; then take $2$-fibers over $\scrO_Y$ and $\scrL$, applying \Cref{lem:fibres-of-equivalence}. 
\end{proof}

\begin{example}\label{exm:narrowclass}
If $Y=\Spec S$ over $X=\Spec \Z$ where $S$ is a real quadratic ring with discriminant $d$, then \Cref{Main Thm SO} recovers the classical statement of Gauss composition saying that primitive binary quadratic forms of discriminant $d$ up to $\SL_2(\Z)$-equivalence  are in bijection with the narrow class group of $S$, with the rigidification corresponding to a choice of generator of $S/\Z$.
\end{example}

\subsection{Comparison} \label{sec:sorryhereitis}

We conclude with some comparison with other definitions, recovering and refining results in other papers.  First, we define a
$\scrP$-\defi{determinant structure} on a locally free
$\scrO_X$-module $\scrE$ of finite rank $m$ to be an
$\scrO_X$-module isomorphism
\[
\omega \colon \tbigwedge^m \scrE \xrightarrow{\sim} \scrP.
\]
An $\scrL$-rigidification of $\scrI$ is equivalent to an
$\scrL\otimes(\scrO_Y/\scrO_X)$-determinant structure on $\scrI$.
Given an $\scrO_X$-module isomorphism
$\varphi \colon \scrE \to \scrE'$ where $\scrE,\scrE'$ have
$\scrP$-determinant structures $\omega,\omega'$, we define
$\det \varphi \in \scrO_X(X)^\times$ as usual.  

Returning to \Cref{rmk:orientugh}, one way to recover $\Pic Y$ is
by working with determinant structures on the quadratic algebra, as
follows.

Define the \defi{twisted value bundle} of
$Q\colon\scrM\to\scrL$ by
\[
\operatorname{tvb}(Q)
\colonequals
\scrL\otimes(\tbigwedge^2\scrM)^\vee.
\]
There is a canonical isomorphism $\operatorname{tvb}(Q) \simeq (\Clf^0(Q)/\scrO_X)^\vee$, so the functor $\operatorname{tvb}\colon\catQMod_2\to\catPic$
factors through $\Clf^0$.  

Let $\scrL_0$ be an invertible
$\scrO_X$-module.  An object in the $2$-fiber of
$\operatorname{tvb}$ over $\scrL_0$ is a quadratic module
$Q\colon\scrM\to\scrL$ together with an isomorphism
$\operatorname{tvb}(Q)\simeq\scrL_0$, or equivalently, after
transporting the value bundle, a quadratic map
\[
Q\colon\scrM\longrightarrow
\tbigwedge^2\scrM\otimes\scrL_0
\]
called an \defi{$\scrL_0$-twisted binary quadratic module}.
In the dual convention of linear binary quadratic forms, this is the
notion appearing in Wood \cite{Wood}*{Theorem 5.2}, and it agrees
with the definitions of Dallaporta
\cite{Dallaporta}*{Definition 3.8} and Mondal--Venkata Balaji
\cite{Mondal}*{Definition 5.14}.

A morphism between $\scrL_0$-twisted binary quadratic modules is an
$\scrO_X$-module isomorphism
$\varphi\colon\scrM\to\scrM'$ satisfying
\[
Q'(\varphi(x))
=
((\det \phi) \otimes 1)Q(x).
\]
Thus its similarity factor is prescribed by the determinant.

Let $\mathsf{OrAlg}_{2,\scrL_0}$ be the $2$-fiber over $\scrL_0$ of
the morphism $\catAlg_2 \to \catPic$ by $\scrO_Y \mapsto (\scrO_Y/\scrO_X)^\vee
\cong (\tbigwedge^2\scrO_Y)^\vee$.  An object is therefore a quadratic algebra $\scrO_Y$ equipped with an $\scrL_0^\vee$-determinant structure.  This is Dallaporta's notion of an \emph{$\scrL_0$-orientation}
\cite{Dallaporta}*{Definition 3.2}, also used by
Mondal--Venkata Balaji \cite{Mondal}*{Definition 5.12}; when
$\scrL_0=\scrO_X$, it is the orientation occurring in Wood
\cite{Wood}*{Theorem 5.2}.

Taking $2$-fibers over $\scrL_0$ in the Clifford--norm equivalence
gives an equivalence, now over $\mathsf{OrAlg}_{2,\scrL_0}$, between
$\scrL_0$-twisted binary quadratic modules under twisted
isomorphisms and pseudoregular modules over quadratic algebras
equipped with an $\scrL_0^\vee$-determinant structure, under
structure-preserving semilinear isomorphisms.  After restricting to
primitive objects and passing to isomorphism classes, this recovers
the correspondences of Wood \cite{Wood}*{Theorem 5.2}, Dallaporta
\cite{Dallaporta}*{Theorem 3.12}, and Mondal--Venkata Balaji
\cite{Mondal}*{Theorem 5.16}.

But we can go further by taking the $2$-fiber!  Let $\scrO_Y$ be a quadratic algebra with an $\scrL_0$-determinant structure, for example $\scrL_0=(\scrO_Y/\scrO_X)^\vee$ with the identity.  Taking the further $2$-fiber, on
the quadratic side, this consists of $\scrL_0$-twisted quadratic modules
equipped with a compatible $\scrO_Y$-orientation; on the module side, it is simply the category of pseudoregular $\scrO_Y$-modules under $\scrO_Y$-module isomorphisms.  Passing to isomorphism classes of primitive objects therefore gives $\Pic Y$.  Compare Dallaporta \cite{Dallaporta}*{Proposition 3.6 and
Theorem 3.21} and Mondal--Venkata Balaji
\cite{Mondal}*{Proposition 5.13 and Theorem 5.17}.  In those
class-level formulations, the hypothesis that $2$ be a
nonzerodivisor is used to show that an $\scrL_0$-oriented quadratic
algebra has no nontrivial automorphisms.  In the genuine $2$-fiber,
a chosen isomorphism of the even Clifford algebra with $\scrO_Y$ is
part of the data, so no such hypothesis is required.

\subsection{Rigidified Picard groups}\label{sec: rigidified Picard group}
In this section, we restrict similitude factors after fixing the value bundles.  

We first extend our generality a bit.  Let $H\leq \scrO_X(X)^\times$ be a subgroup.  For an $\scrO_Y$-module isomorphism $\varphi \colon \scrI \to \scrI'$ with rigidifications $r,r'$ as in \eqref{eqn:nlcis}, we say that $\varphi$ is \defi{$H$-rigidified} if the diagram
\begin{equation} \label{eqn:nlcis}
\begin{tikzcd}[column sep=large]
	\scrN(\scrI) & \scrN(\scrI') \\
	\scrL & \scrL
	\arrow["{(\wedge^2\varphi)\otimes \id}", from=1-1, to=1-2]
	\arrow["\wr", "r"', from=1-1, to=2-1]
	\arrow["r'", "\wr"', from=1-2, to=2-2]
    \arrow["u", from=2-1, to=2-2]
\end{tikzcd}
\end{equation}
commutes with $u \in H$.  So our previous notion of rigidification is the case $H=\{1\}$.  

Recall the group homomorphism $\scrN \colon \Pic Y \to \Pic X$ (\Cref{lem:toomanywedges}).
Let $\Pic_{\scrL} Y$ be the fiber of $\scrN\colon \Pic{Y}\to \Pic{X}$ over $[\scrL]\in \Pic{X}$.  Then $\Pic_{\scrL} Y$ is naturally torsor under the group $\Pic_{\scrO_X}Y=\ker{\scrN}$. 

Let $\RigPic_{\scrL,H} Y$ be the set of $\scrL$-rigidifed invertible $\scrO_Y$-modules under $H$-rigidified $\scrO_Y$-module isomorphism.

\begin{prop}\label{prop: fiber of Rigpic}
The forgetful map $$\pi\colon \RigPic_{\scrL, H} Y\to \Pic_\scrL Y$$ is a surjective map of pointed sets, and each fiber is a torsor for $$\scrO_X(X)^\times/(H\cdot\Nm \scrO_Y(Y)^\times).$$

When $\scrL=\scrO_X$, there is an exact sequence of groups
\begin{equation}
1 \to \scrO_X(X)^\times/(H\cdot\Nm \scrO_Y(Y)^\times) \to \RigPic_{\scrL, H} Y \to \Pic_{\scrL} Y \to 1.
\end{equation} 
\end{prop}

\begin{proof}
The forgetful map is indeed surjective; the fiber over $[\scrI]$ consists of $H$-rigidified isomorphism classes of $\scrL$-rigidifications on $[\scrI]$. Let $r, r'\colon \scrN(\scrI)\xrightarrow{\sim}\scrL$ be $\scrL$-rigidifications of $\scrI$. By \Cref{lem: rigidification is torsor}, there is a unique $u\in \scrO_X(X)^\times$ such that $r'=m_u\circ r$. Invertibility of $\scrI$ implies $\Aut_{\scrO_Y}(\scrI)^\times\simeq \scrO_Y(Y)^\times$, with each $\gamma\in \scrO_Y(Y)^\times$ acting as multiplication. The induced map $\wedge^2m_\gamma$ on $\wedge^2_{\scrO_X}\scrI$ is multiplication by $\Nm(\gamma)\in\scrO_X(X)^\times$. Thus, $m_\gamma$ is $H$-rigidified if and only if $u\Nm(\gamma)=r'\circ (\wedge^2m_\gamma\otimes\id)\circ r^{-1}\in H$. This implies that $(\scrI, r)$ and $(\scrI, r')$ are in the same $H$-rigidified $\scrO_Y$-module isomorphism class if and only if $u\in H\cdot\Nm\scrO_Y(Y)^\times$. It follows that the fiber $\pi^{-1}([\scrI])$ is in bijection with $\scrO_X(X)^\times/(H\cdot\Nm \scrO_Y(Y)^\times)$, which acts on the fiber by $[u]\cdot[(\scrI, r)]=[(\scrI, m_u\circ r)]$, so the fiber is a torsor for this group.

When $\scrL=\scrO_X$, the tensor product over $\scrO_Y$ endows $\RigPic_{\scrO_X, H}Y$ with a group structure: given $[(\scrI, r)]$ and $[(\scrI', r')]$, we define
$$[(\scrI, r)]\cdot [(\scrI', r')]=[(\scrI\otimes_{\scrO_Y}\scrI', r\cdot r')]$$ where $r\cdot r'$ is the composite
\begin{equation}
    \scrN(\scrI\otimes_{\scrO_Y}\scrI')\xrightarrow{\sim}\scrN(\scrI)\otimes_{\scrO_X}\scrN(\scrI')\xrightarrow{r\otimes r'}\scrO_X\otimes_{\scrO_X}\scrO_X\xrightarrow{\sim}\scrO_X
\end{equation}
via the canonical isomorphism \eqref{eqn:secondstatni}. The identity is $[(\scrO_Y, r_0)]$ where $r_0\colon \scrN(\scrO_Y)\to \scrO_X$ is the canonical isomorphism in \Cref{exm:ournormisanorm}. It is straightforward to check that this is well-defined on the $H$-rigidified $\scrO_Y$-module isomorphism classes and $\pi$ is a group homomorphism. 

It remains to identify the kernel of $\pi$. An element $[(\scrI, r)]$ lies in the kernel if and only if $[\scrI]=[\scrO_Y]$. Choose an $\scrO_Y$-module isomorphism $\varphi\colon \scrI\to \scrO_Y$, and this induces an isomorphism $\scrN(\varphi)\colon \scrN(\scrI)\xrightarrow{\sim}\scrN(\scrO_Y)$. Then the rigidification $r$ can be written as $r=m_{u_r}\circ r_0\circ\scrN(\varphi)$ for a unique $u_r\in\scrO_X(X)^\times$. Another $\varphi'\colon\scrI\xrightarrow{\sim}\scrO_Y$ differs by $\varphi'=m_\gamma\circ \varphi$ for some $\gamma\in\scrO_Y(Y)^\times$, and $r=m_{u'r}\circ r_0\circ \scrN(\varphi')$ for another $u'_r\in\scrO_X(X)^\times$. This gives $$r_0\circ\scrN(\varphi')=r_0\circ m_{\Nm(\gamma)}\circ\scrN(\varphi)=m_{\Nm(\gamma)}\circ r_0\circ \scrN(\varphi).$$ Thus, $u'_r=u_r\Nm(\gamma)^{-1}$, so $u_r$ is well-defined modulo $\Nm\scrO_Y(Y)^\times$.

We claim that the map
\begin{align*}
\rho\colon\ker{\pi}&\to\scrO_X(X)^\times/(H\cdot\Nm\scrO_Y(Y)^\times)\\
(\scrI,r)&\mapsto u_r 
\end{align*}
is well-defined. Consider another $\scrO_X$-rigidification $r'$ of $\scrI$ and an $H$-rigidified $\scrO_Y$-module isomorphism $\psi\colon \scrI\to \scrI$ with respect to $r$ and $r'$. Then we have $\wedge^2\psi=m_{\Nm(\gamma)}$ for some $\gamma\in\scrO_Y(Y)^\times$, and thus $$r'\circ(\wedge^2\psi)\circ r^{-1}=(m_{u_{r'}}\circ r_0\circ\scrN(\varphi))\circ m_{\Nm(\gamma)}\circ(m_{u_r}\circ r_0\circ \scrN(\varphi))^{-1}=u_{r'}\Nm(\gamma)u^{-1}_r\in H.$$ This implies that $\rho$ is well-defined. The group structure of $\RigPic_{\scrO_X, H}Y$ guarantees that $\rho$ is a group homomorphism.

For surjectivity, given $u\in\scrO_X(X)^\times$, we have $\rho([(\scrO_Y, m_u\circ r_0)])=[u]$. For injectivity, suppose that $\rho([\scrI, r])=0$.  This implies $u_r=h\Nm(\gamma)$ for some $h\in H$ and $\gamma\in \scrO_Y(Y)^\times$. Taking $\psi=m_\gamma\circ\varphi$ for an $\scrO_Y$-module isomorphism $\varphi\colon \scrI\simeq\scrO_Y$ gives $$r_0\circ\scrN(\psi)\circ r^{-1}=m_{\Nm(\gamma)}\circ m^{-1}_{u_r}=h^{-1}\in H.$$ Hence, $\psi$ is an $H$-rigidified isomorphism from $(\scrI, r)$ to $(\scrO_Y, r_0)$, so $[(\scrI, r)]=[(\scrO_Y, r_0)]$. 
\end{proof}

\begin{prop}\label{prop: H-similitude}
The Clifford and modified norm functors define a discriminant-preserving equivalence of categories fibered over $\catQuad \times_{\catSch} \catPic$ between 
\begin{center}
$\scrO_Y$-oriented binary quadratic $\scrO_X$-modules under oriented $H$-similitudes
\end{center}
and
\begin{center}
rigidified pseudoregular $\scrO_Y$-modules under $H$-rigidified $\scrO_Y$-module isomorphisms.
\end{center}
\end{prop}

\begin{proof}
The group $H$ acts naturally on both sides of \Cref{Main Thm SO}; the bijection comes from taking the $2$-fiber and $H$ acts on $\scrL$, so the bijection is $H$-equivariant.  Taking the quotient gives the result.  
\end{proof}

\begin{cor}\label{cor: H-sim classe =RigPic}
The Clifford and modified norm functors define mutually inverse bijections between the set of $H$-similitude classes of primitive $\calO_Y$-oriented binary quadratic $\scrO_X$-modules with value module $\scrL$ and the set $\RigPic_{\scrL, H} Y$.  
\end{cor}

\begin{proof}
Immediate from taking isomorphism classes on both sides of \Cref{prop: H-similitude}.
\end{proof}

We now assemble the family of torsors $\RigPic_{\scrL,H} Y \to \Pic_{\scrL} Y$.
Concretely, choose a representative $\scrL$ for each isomorphism class $[\scrL]\in\Pic X$. We define 
\begin{equation}
    \RigPic_H{Y}\colonequals\bigsqcup_{[\scrL]\in\Pic{X}}\RigPic_{\scrL, H}Y.
\end{equation}

\begin{lem}
The set $\RigPic_H Y$ is well-defined up to bijection over $\Pic Y$.  
\end{lem}

\begin{proof}
Another choice $\scrL' \xrightarrow{\sim} \scrL$ induces a bijection on $\RigPic_{\scrL, H} Y$, and this obviously commutes with the map to $\Pic Y$. 
\end{proof}

Forgetting the rigidification gives an exact sequence of pointed sets
\begin{equation} 
1 \to \scrO_X(X)^\times/(H\cdot\Nm \scrO_Y(Y)^\times) \to \RigPic_H Y \to \Pic Y \to 1.
\end{equation}
Indeed, the last map is surjective (every $\scrI$ has some rigidification); the elements of $\RigPic Y$ that map to the identity are exactly the isomorphism classes of rigidifications of $\scrO_Y$, which as in \Cref{prop: fiber of Rigpic} are in the image of $\scrO_X(X)^\times/\Nm \scrO_Y(Y)^\times$.  

Despite this nice structure, it is a bit of a dicey proposition to give the middle pointed set the structure of a group.  To do so, we define a \defi{normalized system (of representatives)} for $\Pic{X}$ to be the data of 
\begin{itemize}
    \item a representative $\scrL_s$ for each $s\in\Pic{X}$, with $\scrL_0=\scrO_X$ and
    \item transport isomorphisms $l_{s,s'}\colon \scrL_s\otimes \scrL_{s'}\xrightarrow{\sim}\scrL_{s+s'}$;
\end{itemize}
satisfying the two conditions:
    \begin{enumroman}
        \item $l_{0,s}$ and $l_{s,0}$ are the canonical identifications $\scrO_X\otimes_{\scrO_X}\scrL_s\xrightarrow{\sim}\scrL_s$ and $\scrL_s\otimes_{\scrO_X}\scrO_X\xrightarrow{\sim}\scrL_s$ respectively; and 
        \item $l_{s+s', s''}\circ(l_{s,s'}\otimes\id_{\scrL_{s''}})=l_{s, s'+s''}\circ(\id_{\scrL_s}\otimes l_{s',s''})$.
    \end{enumroman}

\begin{lem}
    A normalized system 
    for $\Pic X$ always exists. 
\end{lem}
\begin{proof}
This follows from a stronger result by Johnson--Osorno \cite{JO}*{Theorem 2.2} by choosing a skeleton for $\Pic X$. We provide a sketch of an explicit construction of a normalized system.

Choose invertible $\scrO_X$-modules $\scrL_i$ whose classes generate $P \colonequals \Pic{X}$, let $A \colonequals \bigoplus_s \Z e_s$, with the natural projection map $\pi\colon A \to P$ sending $e_s \to \scrL_s$.  We construct a normalized system over the free group $A$ by taking for $a=\sum_s a_s e_s$ the module $\scrF_a \colonequals \bigotimes_s \scrL_s^{a_s}$.  Let $N \colonequals \ker \pi$, choose a basis $\{n_j\}_j$ for $N$; for every $n_j$ in the basis choose a trivialization $t_{n_j} \colon \scrF_{n_j} \xrightarrow{\sim} \scrO_X$ and extend these multiplicatively to give trivializations $t_n \colon \scrF_n \to \scrO_X$ for each $n \in N$.  Choose a set-theoretic splitting $\sigma \colon P \to A$.  We get transport isomorphisms from the composite
\[ \scrL_s \otimes \scrL_{s'} = \scrF_{\sigma(s)} \otimes \scrF_{\sigma(s')} \xrightarrow{\sim} \scrF_{\sigma(s)+\sigma(s')} \xrightarrow{t_{\sigma(s+s')-\sigma(s)-\sigma(s')}} \scrF_{\sigma(s+s')} = \scrL_{s+s'}. \]
It is straightforward but elaborate to check compatibility, using the cocycle identity coming from the equality $\sigma(s)+\sigma(s')+\sigma(s'') - \sigma(s+s'+s'')$ written in two ways.  
\end{proof}

\begin{cor} \label{cor: rigpic is group}
Any choice of normalized system endows the set $\RigPic_H{Y}$ with a group structure such that it fits into an exact sequence 
    \begin{equation}\label{eq: Rigpic H exact seq}
1 \to \scrO_X(X)^\times/(H\cdot\Nm \scrO_Y(Y)^\times) \to \RigPic_{H} Y \to \Pic Y \to 1.
\end{equation} 
\end{cor}
\begin{proof}
Let $\{\scrL_s, l_{s,s'}\}_{s, s'}$ be a normalized system. Given $[(\scrI, r)]\in\RigPic_{\scrL_s, H}{Y}$ and $[(\scrI', r)]\in\RigPic_{\scrL_{s'}, H}{Y}$, the canonical isomorphism  \eqref{eqn:secondstatni} gives a rigidification $r\cdot r'$ with value bundle $\scrL_s\otimes_{\scrO_X}\scrL_{s'}$. Transporting along the isomorphism $l_{s,s'}$ to the representative $\scrL_{s''}$ of $[\scrL_s\otimes_{\scrO_X}\scrL_{s'}]$, we obtain $[\scrI\otimes_{\scrO_Y}\scrI', r\cdot r']\in \RigPic_{\scrL_{s''}, H}Y$. The normalization conditions $\scrL_0=\scrO_X$, together with the fact that $l_{0,s}$ and $l_{s,0}$ are the canonical identifications, imply that the unit is the class of $(\scrO_Y, r_0)$ in the fiber over $0$. The associativity condition in the definition of normalized system implies that this multiplication is associative. Therefore, $\RigPic_H Y$ is a group. 

The forgetful map $\RigPic_H Y\to \Pic Y$ is a surjective group homomorphism since this holds fiberwise by \Cref{prop: fiber of Rigpic}. Its kernel is exactly the fiber over the trivial class, which is identified again with $\scrO_X(X)^\times/(H\cdot \Nm{\scrO_Y}(Y)^\times)$.
\end{proof}

\begin{remark}
The group structure on $\RigPic_H Y$ constructed in \Cref{cor: rigpic is group} depends on the choice of normalized system, equivalently the group extension.  As usual, this is governed by the cohomology group $H^2(\Pic X, \scrO_X(X)^\times/(H\cdot \Nm\scrO_Y(Y)^\times))$.  
\end{remark}

\begin{exm}
One way to resolve the ambiguity in the definition of $\Pic Y$ is to take $H=\scrO_X(X)^\times$. Then we have simply $\RigPic Y \xrightarrow{\sim} \Pic Y$ canonically.  Over a Dedekind domain, O'Dorney \cite{O'Dorney} implicitly performs the same assembly as \Cref{cor: rigpic is group} used to derive the group law on classes of binary quadratic forms in Bhargava's reinterpretation of Gauss composition (see \cite{O'Dorney}*{Theorem 5.4 and Corollary 5.6}), with an equivalent notion of orientation (cf.\ \Cref{rmk:orientugh}).  See also Wood \cite{Wood}*{section 5} who similarly proposes assembling classes over varying value modules in this manner.  
\end{exm}






\section{Applications}\label{sec: app}
We record two important consequences of the preceding equivalences. 

\subsection{Over Dedekind domains}

Let $X=\Spec R$ with $R$ a Dedekind domain.  Let $F \colonequals \Frac R$ be the field of fractions of $R$.  Let $Q \colon V \to F$ be a nondegenerate binary quadratic form and let $K \colonequals \Clf^0(Q)$ be its even Clifford algebra, a quadratic $F$-algebra.  Then $K$ is a field or $K \simeq F \times F$.  Let $S\subset K$ be an $R$-order.

\begin{cor}
The Clifford and norm functors define mutually inverse discriminant-preserving bijections as follows:
\begin{enumalph}
\item between the set of similarity classes of $R$-lattices $M \subset V$ with multiplicator ring $S$ and the group $\Pic S$; and
\item between the set of isometry classes of integral $R$-lattices $M \subseteq V$ with multiplicator ring $S$ and $Q(M)R=R$ and the group $\RigPic_R S$. 
\end{enumalph}
\end{cor}

By functoriality, these bijections commute with base change, including localization and completion.  

\begin{proof}
An $R$-lattice $M\subset  V$ with multiplicator ring $S$ determines a binary quadratic $R$-module $Q_M\colon M\to L_M$, where $L_M\coloneq Q(M)R$ is the fractional ideal generated by the values of $Q$ on $M$. By construction, $Q_M$ is primitive. We claim that $\Clf^0(Q_M)=S$. Indeed, it is immediate that $\Clf^0(Q_M)\subseteq S$ since $M$ is a $\Clf^0(Q_M)$-module. In the other direction, we work locally with $M=Re_1\oplus Re_2$ and $Q_M=[a,b,c]$, so $\Clf^0(Q_M)=R[\alpha]$ where $\alpha=e_1e_2$ and $\alpha^2-b\alpha+ac=0$. Taking $k=r+s\alpha\in S$ with $r,s\in F$, we have $ke_1=re_1+s(b-e_2e_1)e_1=(r+sb)e_1-sae_2\in M$ and $ke_2=sce_1+re_2\in M$. This implies that $r, sa, sb, sc\in R$. Since $Q_M$ is primitive, there exist $u,v,w\in R$ such that $au+bv+cw=1$, so $s=(sa)u+(sb)v+(sc)w\in R$. Thus, we have $S\subseteq \Clf^0(Q_M)$ and conclude that $S=\Clf^0(Q_M)$ globally.

Since every $R$-lattice has a canonical orientation obtained by restricting the canonical orientation on $V$, the oriented formalism of \Cref{GSO class =Pic Y} applies. Passing to similarity classes, we obtain the claimed bijection with $\Pic S$. Further restricting to the case of integral lattices with $L_M=R$ proves (b) by \Cref{Main Thm SO}.
\end{proof}  

In this setting, \eqref{eq: Rigpic H exact seq} becomes
\begin{equation}
1 \to R^\times/\Nm S^\times \to \RigPic S \to \Pic S \to 1.
\end{equation} We recognize $R^\times/\Nm S^\times \cong \widehat{H}^0(\Gal(K\,|\,F),S^\times)$ as Tate cohomology.  Concretely, $\RigPic_R S$ is the group of pairs $(I,a)$ with $I$ an invertible fractional $S$-ideal and $a \in F^\times$ satisfying $\Nm I = a R$ modulo the principal subgroup $\{(\alpha S,\Nm \alpha) : \alpha \in K^\times\}$.  

We recover Gauss composition as follows.  For $S=\Z_K$ the ring of integers of a real quadratic field over $R=\Z$,  we recover $\RigPic \Z_K \cong \Cl^+ \Z_K$ the narrow class group.  For $K$ an imaginary quadratic, $\RigPic \Z_K \cong \Cl \Z_K \times \{\pm 1\}$; on the level of binary quadratic forms, this corresponds to allowing forms that are either positive or negative definite.  In other words, considering isometries instead of similarities gives generalizations of narrow class groups.
\subsection{Genera and class sets}\label{genus}

In this section, we consider isomorphisms of objects locally at all points in $X$. 

For a quadratic $\scrO_X$-module $(\scrM, \scrL, Q)$ and a point $x \in X$, let $(\scrM_x, \scrL_x, Q_x)$ be the base extension of $Q$ with respect to $\scrO_{X,x}$ where $\scrM_x=\scrM \otimes_{\scrO_X} \scrO_{X,x}$ and $\scrL_x=\scrL \otimes_{\scrO_X} \scrO_{X,x}$. If $Q$ has an $\scrO_Y$-orientation, then the corresponding local orientation for $Q_x$ is $(\scrO_Y)_x$ for $x\in X$.

Let $Q$ be $\scrO_Y$-oriented and $H\leq \scrO_X(X)^\times$.
\begin{itemize}
\item We define the \defi{oriented similarity genus} of $Q$ to be the set $\Gen_{\GSO}(Q)$ of quadratic $\scrO_X$-modules $(\scrM', \scrL', Q')$ such that there is an $(\scrO_Y)_x$-oriented similarity between $Q_x$ and $Q'_x$ for all $x\in X$.  
\item We can similarly define the \defi{oriented $H$-similitude genus} $\Gen_{\GSO, H}(Q)$. 
\item Forgetting orientations, we can also define the \defi{similarity genus} $\Gen_{\GO}(Q)$ and the \defi{$H$-similitude genus} $\Gen_{\GO, H}(Q)$. 
\end{itemize}
We define the \defi{oriented similarity class set} $\Cls_{\GSO}(Q)$ and  \defi{similarity class set} $\Cls_{\GO}(Q)$, as well as the \defi{oriented $H$-similitude class set} $\Cls_{\GSO,H}(Q)$ and the \defi{$H$-similitude class set} $\Cls_{\GO,H}(Q)$, to be the sets of global classes in the corresponding genera respectively. 

\begin{thm}\label{GSO class=Cls}
 All primitive $\scrO_Y$-oriented binary quadratic $\scrO_X$-modules are in the same oriented similarity genus. 
\end{thm}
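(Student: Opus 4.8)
The plan is to reduce the statement, via \Cref{Main Thm GSO} together with \Cref{primitive = invertible}, to the triviality of the Picard group of a semilocal ring. Fix two primitive $\scrO_Y$-oriented binary quadratic $\scrO_X$-modules $Q$ and $Q'$ and a point $x \in X$; it suffices to produce an $(\scrO_Y)_x$-oriented similarity between $Q_x$ and $Q'_x$, since $x$, $Q$, and $Q'$ are arbitrary.

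First I would record that the Clifford and norm functors commute with base change, in particular along $\scrO_X \to \scrO_{X,x}$, as noted in \cref{sec:univclifcenter} and \cref{sec:norms}. Hence $Q_x$ and $Q'_x$ are again binary, are canonically $(\scrO_Y)_x$-oriented with $\Clf^1(Q_x) \cong \Clf^1(Q)_x$ (and likewise for $Q'$), and remain primitive, because by \Cref{primitive = invertible} primitivity is equivalent to invertibility of the odd Clifford bimodule, a property preserved under base change. By \Cref{Main Thm GSO} (equivalently \Cref{GSO class =Pic Y}) applied over $\Spec \scrO_{X,x}$, the oriented similarity classes of such modules are in bijection with $\Pic (\scrO_Y)_x$, so the statement follows once we know this group vanishes.

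This is the heart of the matter: $(\scrO_Y)_x = \scrO_Y \otimes_{\scrO_X} \scrO_{X,x}$ is free of rank $2$ over the local ring $\scrO_{X,x}$, hence is a semilocal ring --- all of its maximal ideals lie over the maximal ideal of $\scrO_{X,x}$, and there are at most two of them since the fibre is a $2$-dimensional algebra over a field. Over any commutative semilocal ring an invertible module is free (lift a generator modulo the Jacobson radical and apply Nakayama), so $\Pic (\scrO_Y)_x = 0$. Therefore $\Clf^1(Q_x)$ and $\Clf^1(Q'_x)$ are both isomorphic to $(\scrO_Y)_x$ as $(\scrO_Y)_x$-modules, and running the quasi-inverse (norm) direction of \Cref{Main Thm GSO} converts this $\scrO_Y$-module isomorphism into the desired $(\scrO_Y)_x$-oriented similarity between $Q_x$ and $Q'_x$. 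The only genuine input beyond bookkeeping with the already-established equivalence is this semilocal computation; a minor point to check is that base change disturbs neither the binary/primitive hypotheses nor the orientation, which is immediate from the base-change compatibility of the Clifford functor.
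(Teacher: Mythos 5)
Your proof is correct, but it takes a genuinely different route from the paper. The paper argues explicitly: over $\scrO_{X,x}$ it chooses a good basis, uses primitivity over the local ring to find a vector with unit value, rescales, and writes down a concrete similarity $(\varphi,\psi)$ from $Q_x$ to the norm form $\Nm_x \colon (\scrO_Y)_x \to \scrO_{X,x}$, with orientedness checked via \Cref{prop:canorient}; since every primitive oriented module is locally oriented-similar to this one distinguished form, all lie in one genus. You instead reuse the machinery already established: base-change the situation to $\Spec \scrO_{X,x}$, invoke \Cref{Main Thm GSO} and \Cref{GSO class =Pic Y} (via \Cref{primitive = invertible}) to identify local oriented similarity classes with $\Pic\,(\scrO_Y)_x$, and kill that group by noting $(\scrO_Y)_x$ is module-finite over a local ring, hence semilocal, hence has trivial Picard group. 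Both arguments are sound, and they are morally the same statement (your trivialization of the class in $\Pic\,(\scrO_Y)_x$ is the abstract counterpart of the paper's explicit local isomorphism $Q_x \simeq \Nm_x$). What each buys: your version is shorter and more conceptual, at the cost of importing the standard semilocal Picard fact and the base-change compatibility of the Clifford/norm functors (which you rightly flag and which the paper asserts); the paper's computation produces an explicit canonical local representative, which is then recycled almost verbatim in the subsequent corollary on oriented similitude classes (where one composes with a trivialization $\mu$ of $\scrL_x$), so the hands-on normal form does additional work downstream that your argument would need a small supplement to recover.
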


\begin{proof}
    It is enough to show that for any primitive $\scrO_Y$-oriented binary quadratic $\scrO_X$-module $(\scrM, \scrL, Q)$, there is an $(\scrO_Y)_x$-oriented similarity between $Q_x$ and the norm $\Nm_x\colon (\scrO_Y)_x\to \scrO_{X,x}$. Given $x\in X$, we have $(\scrO_Y)_x=\scrO_{X,x} \oplus \scrO_{X,x}\alpha$, $\scrM_x=\scrO_{X,x}e_1\oplus\scrO_{X,x}e_2$ for a good frame $\alpha, e_1, e_2$ of $(\scrO_Y)_x$ and $\scrM_x$. We also have $\scrL_x=\scrO_{X,x}l$ for some $l\in \scrL_x$. By \Cref{prop:canorient}, there is a unique $\scrO_{X,x}$-algebra isomorphism between $\Clf^0(Q_x)$ and $(\scrO_Y)_x$ given by the map $e_1e_2\mapsto \alpha$. Suppose that $Q_x(xe_1+ye_2)=(ax^2+bxy+cy^2)l$ for $a,b,c\in\scrO_{X,x}$. Then we have $(e_1e_2)^2=be_1e_2-ac$, and thus $\Nm_x(x+y\alpha)=x^2+bxy+acy^2$. Since $Q_x$ is primitive and $\scrO_{X,x}$ is a local ring, there exists some $x_0, y_0$ such that $ax^2_0+bx_0y_0+cy^2_0\in \scrO^\times_{X,x}$, and we can replace $e_1$ by $x_0e_1+y_0e_2$ and extend it to a basis for $\scrM_x$. Thus, without loss of generality, we may assume that $a$ is a unit. Let $\varphi \colon \scrM_x\to (\scrO_Y)_x$ and $\psi\colon \scrL_x\to \scrO_{X,x}$ be the $\scrO_{X,x}$-module isomorphisms defined by $\varphi(xe_1+ye_2)=x+a^{-1}y\alpha$ and $\psi(rl)=a^{-1}r$. Then we have $\Nm_x(\varphi(xe_1+ye_2))=x^2+a^{-1}bxy+a^{-1}cy^2=\psi(Q_x(xe_1+ye_2))$, and thus $(\varphi, \psi)$ is a similarity between $Q_x$ and $\Nm_x$. The choice of good frame and \Cref{prop:canorient} ensure that the similarity is oriented. 
\end{proof}

\begin{cor}\label{GSO class set}
    The oriented similarity classes of primitive $\scrO_Y$-oriented binary quadratic $\scrO_X$-modules are exactly the elements of $\Cls_{\GSO}(Q)$ for any primitive $\scrO_Y$-oriented binary quadratic $\scrO_X$-module $Q$.
\end{cor}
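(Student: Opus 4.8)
The plan is to deduce this quickly from \Cref{GSO class=Cls} together with the observation that primitivity is a local condition preserved under similarity and base change, so that the oriented similarity genus of a primitive module contains only primitive modules.

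First I would unwind the definitions: $\Cls_{\GSO}(Q)$ is the set of global oriented similarity classes represented by objects of $\Gen_{\GSO}(Q)$, so the statement amounts to two inclusions, namely (i) the oriented similarity class of any primitive $\scrO_Y$-oriented binary quadratic $\scrO_X$-module belongs to $\Cls_{\GSO}(Q)$, and (ii) every class in $\Cls_{\GSO}(Q)$ is represented by a primitive such module. Inclusion (i) is immediate from \Cref{GSO class=Cls}, which places any two primitive $\scrO_Y$-oriented binary quadratic $\scrO_X$-modules in the same oriented similarity genus, so in particular $Q'$ is locally oriented-similar to $Q$ at every point and hence $Q' \in \Gen_{\GSO}(Q)$.

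For inclusion (ii), I would show that every $(\scrM',\scrL',Q') \in \Gen_{\GSO}(Q)$ is itself primitive. The key point is that $Q\colon \scrM \to \scrL$ is primitive if and only if $Q_x$ is primitive for every $x \in X$: the subsheaf of $\scrL$ generated by the values of $Q$ equals $\scrL$ exactly when each of its stalks does, and the stalk at $x$ is the $\scrO_{X,x}$-submodule of $\scrL_x$ generated by the values of $Q_x$, since localization is exact and compatible with base extension of quadratic modules. Now a similarity $(\varphi,\lambda)$ between $Q_x$ and $Q'_x$ carries the values of $Q_x$ onto those of $Q'_x$ through the $\scrO_{X,x}$-module isomorphism $\lambda$, hence preserves primitivity; as $Q$ is primitive and such a local (oriented) similarity exists at every $x$, we conclude $Q'_x$ is primitive for all $x$, hence $Q'$ is primitive. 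This establishes (ii) and finishes the proof.

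I do not expect a genuine obstacle: the corollary is essentially the global-to-local reformulation of \Cref{GSO class=Cls}, and the only step that wants a moment of care is the stalk-local characterization of primitivity and its stability under similarity and base change, both of which are routine.
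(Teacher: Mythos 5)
Your proposal is correct and takes essentially the same route as the paper, whose proof is simply ``taking global classes in \Cref{GSO class=Cls}.'' Your additional verification that every member of the oriented similarity genus is itself primitive (via the stalk-local nature of primitivity and its preservation under local oriented similarities) just spells out what the paper's one-line proof leaves implicit.
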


\begin{proof}
    Taking global classes in \Cref{GSO class=Cls} yields the result. 
\end{proof}






\appendix

\section{Recent work}\label{recentwork}

In this appendix, we review recent work generalizing Gauss composition. The history of the composition of binary quadratic forms and the revisiting of Gauss composition in the 19th and early 20th centuries can be found in \cite{Dickson23} (see also \cite{Towber}*{Introduction}).

Gauss's original approach in \cite{Gauss} was complicated. Influenced by Legendre, Dirichlet and Dedekind took another route in \cite{Dirichlet}*{Supplement X} (see also \cite{Cox}*{Section 3A} or \cite{Dickson51}*{Section IX}). Two quadratic forms $ax^2+bxy+cy^2$ and $a'x^2+b'xy+c'y^2$ over $\Z$ are called \emph{united} if they have the same discriminant and $\gcd(a, a', \frac{b+b'}{2})=1$. Dirichlet and Dedekind defined a composition for united forms and realized a group structure for the set of classes of primitive positive definite forms of the same discriminant. In 1961, following Dirichlet and Dedekind, Lubelski \cite{Lubelski} generalized the composition over Euclidean rings. 

In 1968, Butts--Estes \cite{ButtsEstes} introduced $C$-domains $D$, which are integral domains with characteristic not 2 and certain structural conditions, and developed explicit composition laws for binary quadratic forms over $D$ via united forms. They showed that the set of classes of primitive binary quadratic forms over $D$ with discriminants in a certain set $\Delta$ is a commutative semigroup. If the discriminant is fixed, then this recovers the classical case. Moreover, they considered the quadratic algebra $R=D[\omega]$ and gave a norm condition for an $R$-ideal generated by $A$ and $b+\omega$ as a $D$-module to be invertible where $A$ is an invertible ideal of $D$ and $b\in D$. The direct compounds of binary quadratic forms correspond exactly to products of free rank-two $R$-modules, and whenever products of free $D$-modules in $R$ remain free, the classes of primitive forms with discriminant $d\in\Delta$ form an abelian group $G_d$ under composition. They also constructed a surjective homomorphism from $G_d$ to a subgroup of the ideal class group of $R$, identifying the kernel as those classes containing a form that represents a unit in $D$.

Also in 1968, Kaplansky \cite{Kaplansky} extended the theory of composition to a Bézout domain of characteristic not 2 via module multiplication. This recovers the same composition of united forms. 

In 1972, Butts--Dulin \cite{ButtsDulin} studied the connection between Gauss composition and composition of united forms. They considered the compound and the Gaussian compound of binary quadratic forms defined by Gauss. Then they gave necessary and sufficient conditions for existence in each case over a Bézout domain and for existence of composition of united forms in an integral domain of characteristic not 2 where two primitive binary quadratic forms of the same discriminant have a Gaussian compound. Furthermore, they showed that the composition of united forms holds over a Bézout domain.

In 1980, Towber  \cite{Towber} gave a generalization in which the domain of a quadratic form $Q\colon M\to R$ where $M$ is a locally free module of rank 2 over a commutative ring $R$ (with $1$) such that $\tbigwedge^2 M$ (equipped with an orientation given by a choice of generator) is a free $R$-module of rank 1. He also introduced a composition law on the set of equivalence classes under the action of $\SL_2(R)$ of primitive, oriented binary quadratic forms $ax^2+bxy+cy^2$ with a given discriminant and a given residue class of $b$ in $R/2R$. 

In 1982, Kneser \cite{Kneser} considered quadratic forms $Q \colon M \to R$ over a commutative ring $R$ where $\Clf^0(Q)$ is isomorphic to a given \emph{type} which is an $R$-algebra $C$. This $C$-module structure ensures the existence and the uniqueness of a composition map. The isomorphism classes of primitive binary quadratic forms of type $C$ with the composition law form an abelian group $G(C)$. Forgetting the quadratic form on $M$ yields a homomorphism $G(C)\to \Pic(C)$ which is in general neither injective nor surjective. To obtain the full Picard group, he then considered $M$ as invertible $C$-modules. \cite{Kneser}*{Proposition 2} gives quadratic $R$-modules (also called \emph{quadratic maps} by Kneser) $Q\colon M \to N$ where $N$ is an invertible $R$-module, $Q(cx)=\Nm_{C/R}(c)Q(x)$ for all $c\in C$ and $x\in M$, and $Q$ is primitive. The isomorphism classes (oriented similarity classes in our notion) of such quadratic modules are isomorphic to $\Pic(C)$. This is exactly recovered by \Cref{GSO class =Pic Y}.

Most of the works above only considered $\SL_2$-equivalence of binary quadratic forms. In 2000, Mastropietro \cite{Mastropietro} considered equivalences given by matrices of determinants which are totally positive units over real quadratic number fields of class number
one with totally complex quadratic extensions. He gave the construction of a correspondence between ideal class groups and such equivalence classes.

In 2004, Bhargava \cite{Bhargava1} considered the space of $2 \times 2 \times 2$ cubical integer matrices modulo the natural action of $\SL_2(\Z) \times \SL_2(\Z) \times \SL_2(\Z)$ and described six composition laws, including Gauss composition, derived from this approach. He then gave interpretations of the connection between orbits of the six spaces and ideal classes of quadratic orders.  (It would be interesting to pursue a sheafified version of this approach.) \cite{Bhargava2} and \cite{Bhargava3} in the same year and \cite{Bhargava4} in 2008 developed analogous laws of composition on forms of degree $k>2$ so that the resulting orbits parametrize orders in number fields of degree $k$.  

In 2011, Wood \cite{Wood} gave a generalization of Gauss composition over any base scheme. Her dual point of view considered linear binary quadratic forms over a scheme $X$ as global sections of $\Sym^2(\scrM)\otimes \scrL$ where $\scrM$ is a locally free $\scrO_X$-module of rank 2 and $\scrL$ is an invertible $\scrO_X$-module (see discussion in \cref{sec:wood}). The $\GL_2(\Z)\times \GL_1(\Z)$-equivalences are exactly our notion of similarities. On the other hand, she considers pairs $(\scrO_Y, \scrI)$ where $\scrO_Y$ is a quadratic $\scrO_X$-algebra and $\scrI$ is a traceable $\scrO_Y$-module. \Cref{prop:traceable} has shown that being traceable is equivalent to being pseudoregular. Then \Cref{Main Thm GO} recovers \cite{Wood}*{Theorem 1.4}, and thus restricting to primitive forms recovers \cite{Wood}*{Theorem 1.5} because primitive forms correspond to invertible modules by \Cref{primitive = invertible}. Wood \cite{Wood}*{section 5} further equips an $\scrL$-type orientation corresponding to $(\scrO_Y, \scrI)$, and \Cref{Main Thm SO} recovers \cite{Wood}*{Theorem 5.1}. Unfortunately, Wood did not give group laws or isomorphisms with Picard groups in our style; instead, her results only provided a set-theoretic bijection with a disjoint union of quotients of Picard groups in the primitive case.  

Wood mentioned that Lenstra suggested in a talk that Kneser's approach could yield a theorem in the style of \cite{Wood}*{Theorem 5.1} in the case when the forms are primitive and nondegenerate. He also suggested a Clifford algebra construction which could provide a theorem along the lines of \cite{Wood}*{Theorem 1.4}.

In 2016, O'Dorney \cite{O'Dorney} generalized Bhargava's theory of higher composition law over arbitrary Dedekind domains. The parametrization of quadratic, cubic, and quartic algebras as well as ideal classes in
quadratic algebras extended Bhargava's reinterpretation of Gauss composition. 

In 2021, Zemková \cite{Zemkova} generalized Mastropietro's work to the case where the base field is any number field of narrow class number one and gave an explicit correspondence between the ideal class group and the equivalence classes of binary quadratic forms. She also gave a short overview of previous works in the introduction section. 

In 2025, following Wood's approach with extra refinements, Dallaporta
\cite{Dallaporta} recovered from Wood’s bijection an explicit bijection between the
Picard group of a given quadratic algebra and a set of classes of primitive quadratic
forms over a scheme. \Cref{GSO class =Pic Y} recovers \cite{Dallaporta}*{Theorem 1.2} where the corresponding quadratic algebra is specified by the discriminant and parity.  In this work, the author often assumes that $2$ is a nonzerodivisor on the base.  

Also in 2025, Bitan \cite{Bitan} considered Gauss composition for binary quadratic forms over the ring of regular functions over
an affine curve over a finite field of odd characteristic. He used \'etale cohomology to describe the bijection between the equivalence classes of binary quadratic forms and the Picard group of the corresponding quadratic algebra. 

Finally and very recently, Mondal \cite{Mondal-thesis} and Mondal--Venkata Balaji \cite{Mondal} also adopted the point of view that Gauss composition can be seen through the lens of the Clifford functor.  They prove functorial bijections (but not an equivalence of categories) for similarities and show that for similitude groups one can recover the Picard group when $2$ is a nonzerodivisor.  To show that the Clifford map is bijective on classes, they also applied the method of universal norms due to Bichsel--Knus \cite{BK}.

\end{document}